\theoremstyle{plain}
\newtheorem{theo}{Theorem}[section]
\newtheorem*{theo*}{Theorem}
\newtheorem{prop}{Proposition}[section]
\newtheorem{cor}{Corollary}[section]
\newtheorem*{lem*}{Lemma}
\theoremstyle{definition}
\newtheorem{lem}{Lemma}[section]
\newtheorem{condition}{Condition}[section]
\newtheorem{example-condition}{Example-Condition}[section]
\theoremstyle{remark}
\newtheorem*{ack}{Acknowledgements}
\newtheorem*{rem*}{Remark}
\newtheorem{rem}{Remark}[section]
\newcommand{\R}{\mathbb{R}}
\newcommand{\T}{\mathbb{T}}
\newcommand{\Z}{\mathbb{Z}}
\newcommand{\C}{\mathbb{C}}
\newcommand{\N}{\mathbb{N}}
\begin{document}

\bibliographystyle{unsrt}

\abstract
In a system of particles, quasi-periodic almost-collision orbits are collisionless orbits along which two bodies become arbitrarily close to each other --  the lower limit of their distance is zero but the upper limit is strictly positive -- and which are quasi-periodic in a regularized system up to a change of time. The existence of such orbits was shown in the restricted planar circular three-body problem by A. Chenciner and J. Llibre, and later, in the planar three-body problem by J. F\'ejoz. In the spatial three-body problem, the existence of a set of positive measure of such orbits was predicted by C. Marchal. In this article, we present a proof of this fact.
\endabstract

\title{Quasi-periodic Almost-collision orbits in the Spatial Three-Body Problem}

\date\today
\author{Lei Zhao}
\address{ASD, IMCCE, Observatoire de Paris, 77 Avenue Denfert-Rochereau 75014 Paris, France / Université Paris Diderot }
\email{zhaolei@imcce.fr}
\maketitle

\tableofcontents

\section{Introduction}
\subsection{Quasi-periodic Almost-collision Orbits}
In Chazy's classification of final motions of the three-body problem (see \cite[P.~83]{ArnoldEncyclopedia}), possible final velocities were not specified for two particular kinds of possible motions: bounded motions, \emph{i.e.} those motions such that the mutual distances remain bounded when time goes to infinity, and oscillating motions, \emph{i.e.} those motions along which the upper limit of the mutual distances goes to infinity, while the lower limit of the mutual distances remains finite. A number of bounded motions and a few oscillating motions were known, with Sitnikov's model being one of the well-known examples of the latter kind. 

If we replace the oscillation of mutual distances by the oscillation of relative velocities of the bodies, we obtain another kind of oscillating motions, which was called by C. Marchal ``oscillating motions of the second kind''. By consulting the criteria for velocities in Chazy's classification, we see that if such motions do exist and are not (usual) oscillating motions, then they must be bounded.

In \cite{MarchalCollision}, by analyzing an integrable approximating system of the lunar spatial three-body problem (in which a far away third body is added to a two-body system) near a degenerate inner ellipse, C. Marchal became aware of the existence of a positive measure of such motions in the spatial three-body problem. More precisely, the predicted motions
\begin{itemize}
\item are with incommensurable frequencies;
\item arise from invariant tori of the quadrupolar system $F_{sec}^{1,2}$ (see Subsection \ref{Subsection: Elimination of g_{2}} for its definition);
\item  form a possibly nowhere dense set with small but positive measure in the phase space.
\end{itemize}

In this article, we shall investigate a particular kind of oscillating motions of the second kind of the spatial three-body problem: the \emph{quasi-periodic almost-collision orbits}, which are, by definition, collisionless orbits along which two bodies get arbitrarily close to each other: the lower limit of their distance is zero but the upper limit is strictly positive, and which are quasi-periodic in a regularized system. Moreover, we shall show the existence of a set of positive measure of such orbits arising from the (regularized) quadrupolar invariant tori. These are exactly the orbits predicted by C. Marchal.  
As noted by him, since real bodies occupy positive volumes in the universe, the existence of a set of positive measure of almost-collision quasi-periodic motions implies a positive probability of collisions in triple star systems {with one body far away from the other two, and the probability is uniform with respect to their (positive) volumes}. The collision mechanism given by quasi-periodic almost-collision orbits has much larger probability, and is thus more important than the mechanism given by direct collisions in the particle model, {especially when the volumes of the modeled real massive bodies are small}.

The first rigorous mathematical study of quasi-periodic almost-collision orbits was achieved by A. Chenciner and J. Llibre in \cite{ChencinerLlibre}, where they considered the planar circular restricted three-body problem in a rotating frame with a large enough Jacobi constant which determines a Hill region with three connected components. After regularizing the dynamics near the double collision of the astroid with one of the primaries by Levi-Civita regularization, they reduced the dynamical study to the study of the corresponding Poincar{\'e} map on a global annulus of section in the regularized system. This Poincar{\'e} map is a twist map with a small twist perturbed by a much smaller perturbation, which makes it possible to apply Moser's invariant curve theorem to establish the persistence of a set of positive measure of invariant KAM tori. By adjusting the Jacobi constant, a set of positive measure of such invariant tori was shown to intersect transversally the codimension 2 \emph{collision set} (the set in the regularized phase space corresponding to the double collision of the astroid with the primary). Such invariant tori were called invariant ``punctured'' tori, as in the (non-regularized) phase space, they have a finite number of punctures corresponding to collisions. As the flow is linear and ergodic on each  KAM torus, most of the orbits will not pass through but will get arbitrary close to the collision set. These orbits give rise to a set of positive measure of quasi-periodic almost-collision orbits in the planar circular restricted three-body problem. 

In his thesis \cite{Fthesis} (and in the article \cite{QuasiMotionPlanar}), J. F{\'e}joz generalized the study of Chenciner-Llibre to the planar three-body problem. In his study, the inner double collisions being regularized, the \emph{secular regularized systems}, \emph{i.e.} the normal forms one gets by averaging over the fast angles, are established with the same averaging method as the usual non-regularized ones. A careful analysis shows that the dynamics of the secular regularized system and the naturally extended (through degenerate inner ellipses) secular systems are orbitally conjugate, up to a modification of the mass of the third body which is far away from the inner pair. The persistence of a set of positive measure of invariant tori is obtained by application of a sophisticated version of KAM theorem. After verifying the transversality of the intersections between the KAM tori and the codimension 2 collision set corresponding to collisions of the inner pair, he concluded in the same way as Chenciner-Llibre. 

In this article, we generalize the studies of Chenciner-Llibre and F\'ejoz to the spatial three-body problem, and hence confirm the prediction of C. Marchal. We prove
\begin{theo}\label{Theo: Main}In the spatial three-body problem, there exists a set of positive measure of quasi-periodic almost-collision orbits on each negative energy surface. It follows that the set of quasi-periodic almost-collision orbits has positive measure in the phase space.
\end{theo}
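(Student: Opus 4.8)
The plan is to follow the strategy pioneered by Chenciner--Llibre and extended by F\'ejoz, adapting each step to the spatial setting, where the additional rotational degrees of freedom and the more intricate regularization make the analysis substantially harder. First I would place myself in the lunar regime, in which the third body is far from the inner pair, and introduce Jacobi coordinates together with Delaunay-type action-angle variables. To desingularize the inner double collisions I would use the Kustaanheimo--Stiefel regularization, the spatial analogue of the Levi-Civita regularization employed in the planar case, which turns the inner collisions into a smooth codimension-2 submanifold of a regularized phase space --- the \emph{collision set} --- on which the regularized flow is everywhere defined.

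The second step is to construct the \emph{secular regularized system} by averaging the regularized Hamiltonian over the two fast angles, the mean anomalies of the inner and outer ellipses. As in F\'ejoz's planar analysis, I would check that this regularized normal form extends smoothly across the degenerate inner ellipses, and that to leading order in the ratio of the semi-major axes it is orbitally conjugate --- after the reparametrization of time forced by the regularization and up to an adjustment of the outer mass --- to the quadrupolar system $F_{sec}^{1,2}$ of Subsection~\ref{Subsection: Elimination of g_{2}}. This reduces the dynamical question to the analysis of the integrable (once reduced by rotations) quadrupolar dynamics in a neighborhood of a degenerate inner ellipse.

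Next I would study the quadrupolar flow itself: identify the family of invariant tori that fill a neighborhood of the degenerate inner ellipse, compute the associated frequency map, and verify the non-degeneracy hypotheses required to run a KAM argument in this \emph{properly degenerate} system, whose Keplerian part depends only on the semi-major axes, so that the secular frequencies arise at higher order and the fast and slow time-scales must be separated; verifying these conditions explicitly for the spatial quadrupolar system, with its Lidov--Kozai resonant structure, is itself delicate. Granting them, an application of a KAM theorem adapted to properly degenerate systems (in the spirit of Arnold, Herman and F\'ejoz) produces a set of positive measure of invariant tori of the full regularized spatial problem, accumulating on the collision set and carrying quasi-periodic flows with incommensurable frequencies.

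The final step, which I expect to be the main obstacle, is to show that a positive-measure subfamily of these KAM tori meets the codimension-2 collision set \emph{transversally}. This requires a careful frequency analysis: one must exhibit a frequency direction transverse to the collision set and confirm that it is rationally independent from the remaining frequencies, so that the linear ergodic flow on each such torus visits every neighborhood of its finitely many punctures without ever passing through them. These punctured tori then supply, for almost every initial condition, an orbit whose inner distance has zero lower limit and strictly positive upper limit, that is, a quasi-periodic almost-collision orbit. Fibering the construction over the energy and applying Fubini's theorem yields a set of positive measure on each negative energy surface, and hence in the full phase space, which completes the proof.
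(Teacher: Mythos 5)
Your overall strategy matches the paper's: KS regularization in the lunar regime, secular normal forms, analysis of the quadrupolar system near degenerate inner ellipses, a properly-degenerate KAM theorem, then transversality plus ergodicity. But there is a genuine gap at the KAM step. The regularization is performed on a \emph{fixed} energy surface $F=-f$: the regularized Hamiltonian $\mathcal{F}=K.S.^{*}\bigl(\|Q_{1}\|(F+f)\bigr)$ reproduces the dynamics of $F$ (up to the time change) \emph{only on its zero-energy level} $\mathcal{F}=0$. A KAM theorem for properly degenerate systems applied as you describe produces a positive-measure set of invariant tori spread over all nearby energy levels of $\mathcal{F}$; the tori with $\mathcal{F}\neq 0$ correspond to no orbit of the three-body problem at energy $-f$. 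To prove the statement — positive measure on \emph{each} negative energy surface — you must produce a positive-measure family of tori \emph{inside} $\{\mathcal{F}=0\}$, which requires an iso-energetic KAM theorem and the verification of iso-energetic non-degeneracy (a bordered-Hessian condition), checked separately for the Keplerian part and for the reduced secular part; this is precisely what the paper does. Your closing sentence also has the logic backwards: Fubini passes from positive measure on each energy surface (as $f$ varies) to positive measure in phase space, not the other way around, so it cannot substitute for the iso-energetic argument.

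Two further points. In the spatial problem the collision set $\{z=0\}$ has codimension 3 in the regularized phase space ($z$ ranges in $\mathbb{H}\cong\R^{4}$), not codimension 2 as in the planar Levi-Civita picture you describe; accordingly its transverse intersection with a 4- or 5-dimensional KAM torus is a positive-dimensional submanifold, not a finite set of punctures. This does not hurt you — codimension at least 2 inside the torus is all the ergodic avoidance lemma needs — but it shows you are importing the planar geometry. Finally, the last step is not a ``frequency analysis'': rational independence of the frequencies comes for free from the Diophantine condition in the KAM theorem, and what must actually be proved is \emph{geometric} transversality of the tori with the collision set. In the paper this rests on the quadrupolar picture (reduced periodic orbits crossing $\{G_{1}=0\}$ transversely, and the tori being foliated by inner Kepler orbits), together with a delicate control of the directions conjugate to $(G_{2},g_{2})$: one must choose the $g_{2}$-eliminating transformation to be rotation-invariant and exploit a lower bound on $\partial^{2}\hat{\mathcal{H}}/\partial g_{2}^{2}$ on the collision set, which comes from the non-trivial $g_{2}$-dependence of the octupolar term $\mathcal{F}_{sec}^{1,3}$ restricted to degenerate inner ellipses. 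That is where the real work lies in the step you correctly flag as the main obstacle, and your proposal gives no mechanism for it.
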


\subsection{Outline of the proof}
We assume that we are in the ``lunar case'', that is the case of a pair of two bodies far away from the third one. We decompose the Hamiltonian $F$ of the three-body problem into two parts
$$F=F_{Kep} + F_{pert},$$
where $F_{Kep}$ is the sum of two uncoupled Keplerian Hamiltonians, and $F_{pert}$ is significantly smaller than each of the Keplerian Hamiltonians in $F_{Kep}$. The dynamics of $F$ can thus be described as {almost} uncoupled Keplerian motions with slow evolutions of the Keplerian orbits (see Section \ref{Section: Formulation}).

As in \cite{ChencinerLlibre}, \cite{QuasiMotionPlanar}, the strategy is to find a set of positive measure of irrational tori in the corresponding energy level of a regularized system $\mathcal{F}$ of $F$. More precisely, we shall 
\begin{enumerate}
\item regularize the inner double collisions of $F$ on the energy surface $F=-f<0$ by  Kustaanheimo-Stiefel regularization to obtain a Hamiltonian $\mathcal{F}$ regular at the \emph{collision set}, corresponding to inner double collisions of $F$ (see Section \ref{Section: Regularization});
\item build an integrable truncated normal form $\mathcal{F}_{Kep}+\overline{\mathcal{F}_{sec}^{n, n'}}$ of $\mathcal{F}$ (see Section \ref{Section: Normal Forms}) and study its Lagrangian invariant tori passing near the collision set (see Section \ref{Section: Dynamics of the Integrable Normal Forms}, where in particular, we extend Lidov-Ziglin's study \cite{LidovZiglin} of the quadrupolar dynamics to degenerate inner ellipses); 
\item apply an iso-energetic proper-degenerate KAM theorem to find a set of positive measure of invariant tori of $\mathcal{F}$ on its zero-energy level (which is the only one on which the dynamics of $\mathcal{F}$ extends the dynamics of $F$, see Section \ref{Section: Application of KAM theorem});
\item show that a set of positive measure of invariant ergodic tori intersect transversely the collision set in submanifolds of codimension at least 2; conclude that there exists a set of positive measure of quasi-periodic almost-collision orbits on the energy surface $F=-f$; finally, by varying $f$, conclude that these orbits form a set of positive measure in the phase space of $F$ (see Section \ref{Section: Transversality}).
\end{enumerate}

\section{Hamiltonian formalism of the three-body problem} \label{Section: Formulation}
\subsection{The Hamiltonian}
The three-body problem is a Hamiltonian system with phase space
$$\Pi:=\left\{(p_{j}, q_{j})_{j=0,1,2}=(p_{j}^{1}, p_{j}^{2}, p_{j}^{3}, q_{j}^{1}, q_{j}^{2}, q_{j}^{3}) \in (\R^{3} \times \R^3)^3 |\,  \forall 0 \leq j \neq k \leq 2, q_j \neq q_k \right\}, $$
(standard) symplectic form 
$$\sum^{2}_{j=0} \sum^{3}_{l=1} d p_j^l \wedge d q_j^l,$$
and the Hamiltonian function
$$F=\dfrac{1}{2} \sum_{0 \le j \le 2} \dfrac{\|p_j\|^2}{m_j} -  \sum_{0 \le j < k \le 2} \dfrac{m_j m_k}{\|q_j- q_k\|},$$
in which $q_0,q_1,q_2$ denote the positions of the three particles, and $p_0,p_1,p_2$ denote their conjugate momenta respectively. The Euclidean norm of a vector in $\R^{3}$ is denoted by $\|\cdot\|$. The gravitational constant has been set to $1$.

\subsection{Jacobi decomposition}

The Hamiltonian $F$ is invariant under translations in positions. To reduce the system by this symmetry, we {switch} to the \emph{Jacobi coordinates} $(P_i, Q_i),i=0, 1, 2,$ defined as
\begin{equation*} 
\left\{
\begin{array}{l} P_0=p_0+p_1+ p_2 \\ P_1=p_1+ \sigma_1 p_2\\ P_2 = p_2
\end{array} \right.
\hbox{ \phantom{aaaaaaaqqqqaa}}
\left\{
\begin{array}{l} Q_0=q_0 \\ Q_1=q_1- q_0 \\ Q_2=q_2-\sigma_0 q_0-\sigma_1 q_1,
\end{array} \right.
\end{equation*}
where 
$$\dfrac{1}{\sigma_0}=1+\dfrac{m_1}{m_0},  \dfrac{1}{\sigma_1}=1+\dfrac{m_0}{m_1}.$$
Due to the symmetry, the Hamiltonian function is independent of $Q_{0}$. We fix the first integral $P_{0}$ (conjugate to $Q_{0}$) at $P_{0}=0$ and reduce the translation symmetry of the system by eliminate $Q_{0}$. In coordinates $(P_i, Q_i),i=1, 2$, the (reduced) Hamiltonian function $F=F(P_{1}, Q_{1}, P_{2}, Q_{2})$ thus describes the motion of two fictitious particles. 

We further decompose the Hamiltonian $F(P_{1}, Q_{1}, P_{2}, Q_{2})$ into two parts $$F=F_{Kep}+F_{pert},$$ where the \emph{Keplerian part} $F_{Kep}$ and the \emph{perturbing part} $F_{pert}$ are respectively
\begin{align*}
&F_{Kep}=\dfrac{\|P_1\|^2}{2 \mu_1}+\dfrac{\|P_2\|^2}{2 \mu_2}-\dfrac{\mu_1 M_1}{\|Q_1\|}-\dfrac{\mu_2 M_2}{\|Q_2\|},
  \\&F_{pert}=-\mu_1 m_2\Bigl[\dfrac{1}{\sigma_o}\bigl(\dfrac{1}{\|Q_2-\sigma_0 Q_1\|}-\dfrac{1}{\|Q_2\|}\bigr)+\dfrac{1}{\sigma_1}\bigl(\dfrac{1}{\|Q_2+\sigma_1 Q_1\|}-\dfrac{1}{\|Q_2\|}\bigr)\,\Bigr],
 \end{align*} \label{Not: pert part}
 with (as in \cite{QuasiMotionPlanar})
 \begin{align*}&\dfrac{1}{\mu_1}=\dfrac{1}{m_0}+\dfrac{1}{m_1}, \, \dfrac{1}{\mu_2}=\dfrac{1}{m_0+m_1}+\dfrac{1}{m_2},\\ & M_1=m_0+m_1, M_2=m_0+m_1+m_2.
 \end{align*} 

We shall only be interested in the region of the phase space where $F=F_{Kep}+F_{pert}$ is a small perturbation of a pair of Keplerian elliptic motions.

\section{Regularization} \label{Section: Regularization}
We aim at carrying a perturbative study near the inner double collisions $\|Q_{1}\|=0$ where the Hamiltonian $F$ is singular. To this end, we have to regularize the system. We shall use Kustaanheimo-Stiefel regularization (c.f. \cite{SS}) and, starting with a formula appearing in \cite{Mikkola}, we formulate this method in a quaternionic way (See also \cite{KSregularization}). Another slightly different quaternionic formulation can be found in \cite{Waldvogel}.

\subsection{Kustaanheimo-Stiefel regularization}
Let $\mathbb{H} (\cong \R^{4})$ be the space of quaternions $z=z_{0}+z_{1} i+ z_{2} j + z_{3} k$, $\mathbb{IH} (\cong \R^{3})$ be the space of purely imaginary quaternions ($z_{0}=0$), $\bar{z}=z_{0}-z_{1} i- z_{2} j - z_{3} k$ and $|z|=\sqrt{\bar{z} z}$ be the conjugate quaternion and the quaternionic module of $z$ respectively.

Let $(z, w) \in \mathbb{H} \times \mathbb{H} \cong T^{*} \mathbb{H} $ be a pair of quaternions. 
We define the Hopf mapping
\begin{align*}
         \mathbb{H} \setminus \{0\}  \rightarrow \mathbb{IH} \setminus \{0\} \\
        \qquad \phantom{\mathbb{H}} z\,\, \mapsto Q=\bar{z} i z
\end{align*}
and the Kustaanheimo-Stiefel mapping  
\begin{align*}
  K.S.: &T^*(\mathbb{H} \setminus \{0\})   \rightarrow \mathbb{IH} \times \mathbb{H} \\
        &(z,w) \longmapsto (Q=\bar{z} i z,P=\dfrac{ \bar{z} i w}{2{|z|}^2}).
\end{align*}

We observe that both mappings have respectively circle fibres $\{e^{i \vartheta} z\}$ and $\{(e^{i \vartheta} z, e^{i \vartheta} w)\}$ for $\vartheta \in \R/ 2 \pi \Z$. The fibres of $K.S.$ define a Hamiltonian circle action on $(T^*\mathbb{H}, Re\{ d \bar{w} \wedge d z\})$ with (up to sign) the moment map  $$BL(z,w)= Re\{\bar{z} i w\}:=\bar{z} i w + \overline{\bar{z} i w}$$
 regarded as a function defined on $T^{*} \mathbb{H} \cong \mathbb{H} \times \mathbb{H}$. The equation
$$BL(z, w)=0$$
thus defines a 7-dimensional quadratic cone $\Sigma$ in $T^{*} \mathbb{H}$. By removing the point $(0,0)$ from $\Sigma$, we obtain a 7-dimensional coisotropic submanifold $\Sigma^{0}$ of the symplectic manifold $(T^*\mathbb{H}, Re\{ d \bar{w} \wedge d z\})$, on which the above-mentioned circle action is free. The quotient $V^{0}$ of $\Sigma^{0}$ by this action is thus a 6-dimensional symplectic manifold equipped with the induced symplectic form $\omega_{1}$. 

We define the 7-dimensional coisotropic submanifold $\Sigma^{1}$ by removing $\{z=0\}$ from $\Sigma$. Analogously, by passing to the quotient, it descends to a 6-dimensional symplectic manifold $(V^{1}, \omega_{1})$.

\begin{prop}\cite[Proposition 3.3]{KSregularization} $K.S.$ induces a symplectomorphism from $(V^{1},\omega_1)$ \\ \phantom{aaaaaaaaaaaaaaa} to $\left(T^*(\mathbb{IH}\setminus \{0\}),\,Re\{d \bar{P} \wedge d Q\}\right)$.
\end{prop}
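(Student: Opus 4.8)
The statement is an instance of cotangent-bundle reduction at zero momentum: the circle action $(z,w)\mapsto(e^{i\vartheta}z,e^{i\vartheta}w)$ is the cotangent lift of the Hopf circle action $z\mapsto e^{i\vartheta}z$ on $\mathbb{H}\setminus\{0\}$, whose momentum map is $BL$, and the Hopf map realizes the base quotient $(\mathbb{H}\setminus\{0\})/S^1\cong\mathbb{IH}\setminus\{0\}$. The plan is therefore to check directly that $K.S.$ induces the canonical symplectomorphism $BL^{-1}(0)/S^1\cong T^*\big((\mathbb{H}\setminus\{0\})/S^1\big)$ predicted by that theorem. Concretely, I would (i) verify that $K.S.|_{\Sigma^1}$ is $S^1$-invariant with image in $T^*(\mathbb{IH}\setminus\{0\})$, so that it descends to a map $\overline{K.S.}\colon V^1\to T^*(\mathbb{IH}\setminus\{0\})$; (ii) show $\overline{K.S.}$ is a diffeomorphism; and (iii) verify that it pulls the canonical form back to $\omega_1$.

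For (i): a one-line conjugation shows $\overline{Q}=\overline{\bar z i z}=-\bar z i z=-Q$, so $Q\in\mathbb{IH}\setminus\{0\}$ (indeed $|Q|=|z|^2>0$); and on $\Sigma^1$, where $BL=Re\{\bar z i w\}=0$, the quaternion $\bar z i w$ is purely imaginary, hence so is $P$, giving $(Q,P)\in T^*(\mathbb{IH}\setminus\{0\})$. Invariance is immediate because $i$ commutes with $e^{i\vartheta}$ and $|e^{i\vartheta}|=1$, so both $Q$ and $P$ are unchanged under the action. For (ii): the Hopf map is onto $\mathbb{IH}\setminus\{0\}$, and given any $(Q,P)$ one recovers a preimage by choosing $z$ with $\bar z i z=Q$ and solving $\bar z i w=2|z|^2P$ (explicitly $w=-2izP$); the constraint $BL=0$ then holds automatically because $P$ is imaginary, proving surjectivity. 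Injectivity on $V^1$ follows from the circle-fibre structure: if two points of $\Sigma^1$ share the same image, equality of the $Q$'s forces $z_2=e^{i\vartheta}z_1$ via the Hopf fibres, and equality of the $P$'s then forces $w_2=e^{i\vartheta}w_1$, so the two points lie on one orbit.

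The heart of the matter is (iii), which I would carry out at the level of Liouville $1$-forms. Writing $\lambda_{\mathbb{H}}=Re\{\bar w\,dz\}$ and $\lambda_{\mathbb{IH}}=Re\{\bar P\,dQ\}$, so that $Re\{d\bar w\wedge dz\}=d\lambda_{\mathbb{H}}$ and $Re\{d\bar P\wedge dQ\}=d\lambda_{\mathbb{IH}}$, a quaternionic computation using $dQ=(d\bar z)iz+\bar z i\,dz$ and $\bar P=-\tfrac{1}{2|z|^2}\bar w i z$ yields $K.S.^*\lambda_{\mathbb{IH}}=\tfrac12 Re\{\bar w\,dz\}-\tfrac{1}{2|z|^2}Re\{\bar w\,iz(d\bar z)iz\}$. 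The claim is that the restriction of this to $\Sigma^1$ equals $\iota^*\lambda_{\mathbb{H}}$, where $\iota\colon\Sigma^1\hookrightarrow T^*\mathbb{H}$: one checks that $iz(d\bar z)iz+|z|^2dz$ is annihilated by $Re\{\bar w\,\cdot\}$ precisely when $Re\{\bar z i w\}=0$, so the second term collapses to $\tfrac12 Re\{\bar w\,dz\}$ on $\Sigma^1$ and the two halves combine to $\lambda_{\mathbb{H}}$. Taking exterior derivatives gives $\iota^*K.S.^*(Re\{d\bar P\wedge dQ\})=\iota^*(Re\{d\bar w\wedge dz\})$, and since by definition of the reduced form $\pi^*\omega_1=\iota^*(Re\{d\bar w\wedge dz\})$ for the projection $\pi\colon\Sigma^1\to V^1$, and $K.S.|_{\Sigma^1}=\overline{K.S.}\circ\pi$, the injectivity of $\pi^*$ yields $\overline{K.S.}^*(Re\{d\bar P\wedge dQ\})=\omega_1$.

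The main obstacle is exactly the $1$-form identity in step (iii): the non-commutativity of the quaternions makes the bookkeeping in $K.S.^*\lambda_{\mathbb{IH}}$ delicate, and it is only the constraint $BL=0$ that forces the unwanted term $Re\{\bar w\,(iz(d\bar z)iz+|z|^2dz)\}$ to vanish. Everything else is formal once this identity is in hand; I would also double-check the normalization of $P$ (the factor $2|z|^2$) against the conventions so that no spurious constant survives the reduction.
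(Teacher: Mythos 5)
The paper itself contains no proof of this proposition: it is imported verbatim from the cited reference \cite[Proposition 3.3]{KSregularization}, so there is no internal argument to compare yours against; your proposal has to stand on its own, and it does. Steps (i) and (ii) are correct (the fibres of the Hopf map are exactly the orbits $\{e^{i\vartheta}z\}$, and $w=-2izP$ inverts the momentum equation, with $BL=0$ automatic when $P$ is imaginary). The crux, your step (iii), is also correct, and can be made fully explicit: setting $u=\bar z i w$, so that $w=-izu/|z|^{2}$ and $\bar w=\bar u\bar z i/|z|^{2}$, the conjugation and trace properties of $Re\{\cdot\}$ give
\begin{equation*}
Re\{\bar w\,iz(d\bar z)iz\}+|z|^{2}\,Re\{\bar w\,dz\}=Re\{(u+\bar u)\,\bar z i\,dz\}=BL(z,w)\,Re\{\bar z i\,dz\},
\end{equation*}
whence the global one-form identity
\begin{equation*}
K.S.^{*}\bigl(Re\{\bar P\,dQ\}\bigr)=Re\{\bar w\,dz\}-\frac{BL(z,w)}{2|z|^{2}}\,Re\{\bar z i\,dz\}.
\end{equation*}
The defect is proportional to $BL$, so it vanishes identically at points of $\Sigma^{1}$ (on all ambient tangent vectors, not only on $T\Sigma^{1}$), which is exactly your claim, including the ``precisely when'' direction, since the form $Re\{\bar z i\,dz\}$ is nonzero whenever $z\neq 0$ (evaluate on $\delta z=iz$). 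The descent argument via injectivity of $\pi^{*}$ for the surjective submersion $\pi\colon\Sigma^{1}\to V^{1}$ is the standard characterization of the reduced form. The only point left tacit is the smoothness of the inverse of $\overline{K.S.}$; this follows either from local smooth sections of the Hopf fibration fed into your explicit formula $(Q,P)\mapsto(z,-2izP)$, or from the fact that a bijection between equal-dimensional manifolds pulling a symplectic form back to a symplectic form is a local diffeomorphism. That is a remark to record, not a gap.
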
 

\subsection{Regularized Hamiltonian}
On the fixed negative energy surface 
$$F=-f < 0,$$
we make a time change (singular at inner double collisions) by passing to the new time variable $\tau$ satisfying 
$$\|Q_{1}\| \,d \tau = d t.$$ 
In time $\tau$, the corresponding motions of the particles are governed by the Hamiltonian $\|Q_{1}\| (F+f)$ and are lying inside its zero-energy level. We extend $K.S.$ to the mapping ({the notation $K. S.$ is abusively maintained for the extension})
$$(z, w, P_{2}, Q_{2}) \mapsto (Q_{1}=\bar{z} i z, P_{1}=\dfrac{\bar{z} i w}{2 |z|^{2}}, P_{2}, Q_{2})$$
and set
$$\mathcal{F}=K.S.^{*} \Bigl(\|Q_{1}\|\, (F+f)\Bigr).$$
This is a function on $\Sigma^{0} \times T^{*} (\R^{3} \setminus \{0\})$ decomposed as
$$\mathcal{F}=\mathcal{F}_{Kep}+\mathcal{F}_{pert},$$
with the \emph{regularized Keplerian part}
$$\mathcal{F}_{Kep} = K.S.^*\Bigl(\|Q_1\|(F_{Kep} + f)\Bigr)=\dfrac{|w|^2}{8 \mu_1}\, +\Bigl(f+\dfrac{\|P_2\|^2}{2 \mu_2}-\dfrac{\mu_2 M_2}{\|Q_2\|}\Bigr)|z|^2\,-\mu_1 M_1$$
describing the skew-product motion of the outer body moving on an Keplerian elliptic orbit, slowed-down by four ``inner'' harmonic oscillators in $1:1:1:1$-resonance,
and the \emph{regularized perturbing part}
$$\mathcal{F}_{pert} = K.S.^*\Bigl(\|Q_1\|F_{pert}\Bigr);$$
both terms extend analytically through the set $\{z=0\}$ corresponding to inner double collisions of $F$. 

By its expression, the function $\mathcal{F}$ can be directly regarded as a function on $T^{*} (\mathbb{H} \setminus \{0\}) \times T^{*} (\R^{3}\setminus \{0\})$. As it is invariant under the fibre action of $K. S.$, its flow preserves $\Sigma^{0} \times T^{*} (\R^{3} \setminus \{0\})$. In the sequel, the relation $BL(z, w)=0$ is always assumed, \emph{i.e.} we always restrict to $\Sigma^{0} \times T^{*} (\R^{3} \setminus \{0\})$.

\subsection{Regular Coordinates} \label{Subsection: Regular Coordinates}
To write $F_{Kep}$ in action-angle form, we start by defining the ({symplectic}) Delaunay coordinates $(L_2,l_2,G_2,g_2,H_2,h_2)$ for the outer body. Let $a_{2}, e_{2}, i_{2}$ be respectively the semi major axis, the eccentricity and the inclination of the outer ellipse. The Delaunay coordinates are defined as the follows:

\begin{equation*} 
\left\{
\begin{array}{ll}L_2=\mu_2 \sqrt{M_2} \sqrt{a_2}   & \hbox{circular angular momentum}\\ l_2  &\hbox{mean anomaly}\\ G_2 = L_2 \sqrt{1-e_2^2} &\hbox{angular momentum} \\g_2 &\hbox{argument of pericentre} \\ H_2=G_2 \cos i_2 &\hbox{vertical component of the angular momentum} \\ h_2 &\hbox{ longitude of the ascending node}.
\end{array}\right.
\end{equation*}

In these coordinates, we have 
$$f+\dfrac{\|P_2\|^2}{2 \mu_2}-\dfrac{\mu_2 M_2}{\|Q_2\|}=f-\dfrac{\mu_{2}^{3} M_{2}^{2}}{2 L_{2}^{2}},$$
which is positive by hypothesis. {We denote it by $f_{1} (L_{2})$.} Now
$$\mathcal{F}_{Kep} = \dfrac{|w|^2}{8 \mu_1}\, + f_{1} (L_{2}) |z|^2\,-\mu_1 M_1.$$

Let  
$$
\sqrt{8 \mu_{0} f_{1} (L_{2})}  \, z_i=\sqrt{2 I_i} \sin{\phi_i}, 
\quad w_i=\sqrt{2 I_i} \cos{\phi_i}, \quad i=0,1,2,3.
$$
and 
\begin{align*}\phantom{ccccccccccccccc}
&\mathcal{P}_0=\dfrac{(I_0+I_1+I_2+I_3)}{2 \sqrt{8 \mu_{0} f_{1} (L_{2})}}, \, \vartheta_0= 2 \phi_0,\\
&\mathcal{P}_i=\dfrac{I_i}{\sqrt{8 \mu_{0} f_{1} (L_{2})}},  \qquad \quad \vartheta_i=\phi_i-\phi_0, \qquad i=1,2,3.
\end{align*}
One directly checks that 
\begin{align*}
&\mathcal{P}_0 \wedge \vartheta_0 + \mathcal{P}_1 \wedge \vartheta_1 +\mathcal{P}_2 \wedge \vartheta_2 + \mathcal{P}_3 \wedge \vartheta_3 + d L_{2} \wedge d l_{2}' + d G_{2} \wedge d g_{2} + d H_{2} \wedge d h_{2}\\=& Re\{d \bar{w} \wedge d z\} + d L_{2} \wedge d l_{2} + d G_{2} \wedge d g_{2} + d H_{2} \wedge d h_{2},
\end{align*}
with $l_{2}'=l_{2} + \dfrac{f_{1}'(L_{2})}{2 f_{1}(L_{2})} Re\{\bar{P}_{1} Q_{1}\}$. 

We thus obtain a set of Darboux coordinates (we call them \emph{regular coordinates})
$$(\mathcal{P}_0, \vartheta_{0}, \mathcal{P}_{1}, \vartheta_{1}, \mathcal{P}_2, \vartheta_{2}, \mathcal{P}_{3}, \vartheta_{3}, L_{2}, l_{2}', G_{2}, g_{2}, H_{2}, h_{2}),$$
in which
$$\mathcal{F}_{Kep}=\mathcal{P}_{0} \sqrt{\dfrac{2 f_{1} (L_{2})}{\mu_{1}}}-\mu_{1} M_{1}. $$

The coordinates $(\mathcal{P}_0, \vartheta_{0}, \mathcal{P}_{1}, \vartheta_{1}, \mathcal{P}_2, \vartheta_{2}, \mathcal{P}_{3}, \vartheta_{3})$ are well-defined on the dense open set of\\ $T^{*} \mathbb{H}\setminus\{(0,0)\}$, defined by 
$$I_{i}>0, i=0,1,2,3,$$
where the projections of the elliptic orbit of the four harmonic oscillators in $1:1:1:1$ resonance in the four $(z_{i}, w_{i})$ planes are non-degenerate, a condition which can always be satisfied by simultaneously rotating these planes properly.
In the sequel, without loss of generality, we shall always assume that these conditions are satisfied.

\section{Normal Forms}\label{Section: Normal Forms}
\subsection{Physical dynamics of $\mathcal{F}_{Kep}$}
The function $\mathcal{F}_{Kep}$ describes a \emph{properly degenerate} Hamiltonian system: it depends only on 2 of the action variables out of 7. To deduce the dynamics of $\mathcal{F}$, study of higher order is thus necessary. 

The perturbing part $\mathcal{F}_{pert}$ describes the mutual interaction between two particles $Q_{1}= \bar{z} i z$ and $Q_{2}$ in the physical space. Under $\mathcal{F}_{Kep}$, the particle $Q_{2}$ moves on elliptic orbits. When the energy of $\mathcal{F}_{Kep}$ is close to zero, this is also the case for $Q_{1}$:

\begin{lem}\label{lem: physical ellipse}
Under the flow of $\mathcal{F}_{Kep}$, when $BL(z, w)=0$, in the energy hypersurface $\mathcal{F}_{Kep}=\tilde{f}$ for any $\tilde{f} > -\mu_{1} M_{1}$, the physical image $Q_{1}= \bar{z} i z$ of $z$ moves on a Keplerian elliptic orbit.
\end{lem}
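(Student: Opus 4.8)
The plan is to reduce the flow of $\mathcal{F}_{Kep}$ to an explicit harmonic oscillator, write down the image of its orbits under the map $z\mapsto\bar z\,i\,z$, and recognise the resulting curve as a conic with focus at the origin, using the bilinear relation $BL(z,w)=0$ at the decisive step.

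First I would note that $\mathcal{F}_{Kep}$ freezes all the slow variables except one angle. In the regular coordinates one has $\mathcal{F}_{Kep}=\mathcal{P}_0\sqrt{2f_1(L_2)/\mu_1}-\mu_1 M_1$, which depends only on $\mathcal{P}_0$ and $L_2$; hence $L_2,G_2,g_2,H_2,h_2$ and the actions $\mathcal{P}_1,\mathcal{P}_2,\mathcal{P}_3$ are constant along the flow, only $\vartheta_0$ and $l_2'$ advancing. In particular $f_1(L_2)$ is a positive constant, which I call $\nu$, and on the variables $(z,w)$ the Hamiltonian $\mathcal{F}_{Kep}=\frac{|w|^2}{8\mu_1}+\nu|z|^2-\mu_1 M_1$ is, for the form $Re\{d\bar w\wedge dz\}$, a harmonic oscillator with four identical frequencies (the $1:1:1:1$ resonance). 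Its equations of motion integrate explicitly as $z(\tau)=A\cos\omega\tau+B\sin\omega\tau$ with common frequency $\omega=\sqrt{\nu/(2\mu_1)}$ and constant quaternions $A,B\in\mathbb{H}$ fixed by the initial data; the hypothesis $\tilde f>-\mu_1 M_1$ makes the oscillator energy $\tilde f+\mu_1 M_1$ strictly positive, so the motion is genuinely oscillatory.

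Next I would compute the physical image. Expanding $Q_1(\tau)=\overline{z(\tau)}\,i\,z(\tau)$ with the double-angle formulas yields $Q_1(\tau)=\mathbf{m}+\mathbf{u}\cos 2\omega\tau+\mathbf{v}\sin 2\omega\tau$, where $\mathbf{m}=\tfrac12(\bar A iA+\bar B iB)$, $\mathbf{u}=\tfrac12(\bar A iA-\bar B iB)$ and $\mathbf{v}=\tfrac12(\bar A iB+\bar B iA)$ all lie in $\mathbb{IH}$. Provided $A,B$ are linearly independent over $\R$ — which is exactly the nondegeneracy $I_i>0$ keeping the orbit off collision — this parametrises an ellipse with centre $\mathbf{m}$ and conjugate semi-diameters $\mathbf{u},\mathbf{v}$. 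Moreover the radius is affine in the angle, since $\|Q_1\|=|z|^2=\tfrac12(|A|^2+|B|^2)+\tfrac12(|A|^2-|B|^2)\cos 2\omega\tau+(A\cdot B)\sin 2\omega\tau$.

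Finally I would invoke $BL=0$ to pin the focus at the origin. A short computation gives $BL(z,w)=4\mu_1\omega\,Re\{\bar A iB\}$, so $BL=0$ is equivalent to $Re\{\bar A iB\}=0$, i.e. $\bar A iB=\bar B iA=\mathbf{v}$. The crux — and the step I expect to be the only delicate one — is to show that the origin lies in the plane of the ellipse, that is $\mathbf{m}\cdot(\mathbf{u}\times\mathbf{v})=0$. Using the identity $\mathbf{a}\cdot(\mathbf{b}\times\mathbf{c})=-Re\{\mathbf{a}\mathbf{b}\mathbf{c}\}$ for imaginary quaternions together with $A\bar A=|A|^2$ and $B\bar B=|B|^2$, the triple product collapses to a multiple of $Re\{\bar A iB\}$, hence vanishes precisely when $BL=0$. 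Since $\|Q_1\|$ is affine along the ellipse and the origin is coplanar, one can then solve $\|Q_1\|=p-\mathbf{e}\cdot Q_1$ for an in-plane vector $\mathbf{e}$; the Cauchy--Schwarz bound $(A\cdot B)^2\le|A|^2|B|^2$, strict by linear independence, forces $|\mathbf{e}|<1$, so this is the focus--directrix relation of an ellipse with focus at the origin. Therefore $Q_1$ moves on a Keplerian elliptic orbit, as claimed; everything apart from the coplanarity identity is a direct verification.
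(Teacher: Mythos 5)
Your proof is correct, but it follows a genuinely different route from the paper's. The paper never integrates the oscillator explicitly: using $\|Q_1\|=|z|^2$ and $\|P_1\|=|w|/(2|z|)$, it rewrites the level-set equation $\mathcal{F}_{Kep}=\tilde f$ as $\|Q_1\|\bigl(\frac{\|P_1\|^2}{2\mu_1}-\frac{\mu_1 M_1+\tilde f}{\|Q_1\|}+f_1(L_2)\bigr)=0$, so that on this level the flow of $\mathcal{F}_{Kep}$ is, up to time reparametrization, the flow of a Kepler Hamiltonian with modified mass parameters $\mu_1$ and $M_1+\tilde f/\mu_1$ on its negative energy level $-f_1(L_2)$; ellipticity of the orbits is then immediate, with no quaternionic computation and with degenerate (rectilinear) ellipses covered uniformly. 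You instead prove directly the classical geometric fact underlying Kustaanheimo--Stiefel regularization: the Hopf map sends oscillator ellipses lying in $\{BL=0\}$ to conics with focus at the origin. Your computations are sound: $\omega=\sqrt{\nu/(2\mu_1)}$ is the correct frequency, $BL=4\mu_1\omega\, Re\{\bar A i B\}$ is constant along the flow, the coplanarity $\mathbf{m}\cdot(\mathbf{u}\times\mathbf{v})=0$ does collapse to a multiple of $Re\{\bar A i B\}$ (via $A\bar A=|A|^2$, $B\bar B=|B|^2$ and cyclicity of the real part), and on $\{BL=0\}$ one finds $|\mathbf{u}|^2|\mathbf{v}|^2-(\mathbf{u}\cdot\mathbf{v})^2=\frac14\bigl(|A|^2+|B|^2\bigr)^2\bigl(|A|^2|B|^2-(A\cdot B)^2\bigr)$, so strict Cauchy--Schwarz is exactly nondegeneracy of the image ellipse, as you claim. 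Two caveats. First, the assertion that Cauchy--Schwarz ``forces $|\mathbf{e}|<1$'' is stated rather than derived; the clean justification is that the image is a compact nondegenerate ellipse satisfying the focal equation $\|Q_1\|=p-\mathbf{e}\cdot Q_1$, and the planar locus of that equation with $|\mathbf{e}|\ge 1$ is unbounded and homeomorphic to $\R$, hence cannot contain a closed curve. Second, your argument excludes linearly dependent $A,B$, i.e.\ degenerate inner ellipses (collision orbits), which the paper's two-line argument handles without any extra work --- a case that is not idle here, since the paper's whole purpose is the study of motion near degenerate inner ellipses. In exchange, your route yields explicit data (the center $\mathbf{m}$, the conjugate semi-diameters $\mathbf{u},\mathbf{v}$, and the affine radius law along the orbit, essentially the eccentric-anomaly parametrization) that the paper's more indirect argument does not exhibit.
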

\begin{proof}
The equation
$$\mathcal{F}_{Kep}=\dfrac{|w|^2}{8 \mu_1}\, +f_{1} (L_{2}) \,|z|^2\,-\mu_1 M_1=\tilde{f} $$
is equivalent to
 $$\|Q_{1}\|\Bigl(\dfrac{\|P_{1}\|^2}{2 \mu_{1}} - \dfrac{\mu_{1} M_{1}+\tilde{f} }{\|Q_{1}\|}+f_{1} (L_{2})\Bigr)=0,$$
 that is
 $$\dfrac{\|P_{1}\|^2}{2 \mu_{1}}-\dfrac{\mu_{1} M_{1}+\tilde{f}}{\|Q_{1}\|}=-f_{1} (L_{2}) < 0.$$
By assumption $\mu_{1} M_{1} + \tilde{f} >0$. The motion of $Q_{1}$ is thus governed, up to time parametrization, by the Hamiltonian of a Kepler problem on a fixed negative energy surface; as the orbits are uniquely determined by their energy, the conclusion follows. 
\end{proof}

We have seen from the above proof that, in the physical space, inner Keplerian ellipses are orbits of the Kepler problem with (modified) mass parameters $\mu_{1}$ and $M_{1}+\dfrac{\mathcal{F}_{Kep}}{\mu_{1}}$. The inner elliptic elements, \emph{e.g.} the inner semi major axis $a_{1}$ and the inner eccentricity $e_{1}$, are the corresponding elliptic elements of the orbit of $Q_{1}$. One directly checks that $a_{1}=\dfrac{\mathcal{P}_{0}}{\sqrt{2 \mu_{1} f_{1} (L_{2}) }}$, and $\vartheta_{0}$ is an eccentric longitude of the inner motion, which differs from its eccentric anomaly $u_{1}$ only by a phase shift. 

\subsection{Asynchronous elimination} \label{Subsection: Asynchronous elimination}
Let $e_{1}, e_{2}$ be the eccentricities of the inner and outer ellipses respectively, $a_{2}$ be the outer semi major axis, and $\alpha=\dfrac{a_{1}}{a_{2}}$ be the ratio of semi major axes. 
We assume that
\begin{itemize}
\item the masses $m_{0}, m_{1}, m_{2}$ are (arbitrarily) fixed;
\item the coordinates $(\mathcal{P}_{0}, \vartheta_{0}, \mathcal{P}_{1}, \vartheta_{1}, \mathcal{P}_{2}, \vartheta_{2}, \mathcal{P}_{3}, \vartheta_{3}, L_{2}, l_{2}, G_{2}, g_{2}, H_{2},h_{2})$ are all well-defined;
\item three positive real numbers $e_{1}^{\vee}, e_{2}^{\vee} < e_{2}^{\wedge}$ are fixed and
$$0<e_{1}^{\vee} < e_{1} \le1,\quad 0<e_{2}^{\vee} < e_{2} < e_{2}^{\wedge}<1;$$
\item two positive real numbers $a_{1}^{\vee} < a_{1}^{\wedge}$ are fixed, and
$$a_{1}^{\vee} < a_{1} < a_{1}^{\wedge};$$
\item $\alpha < \alpha^{\wedge}:=\min\{\dfrac{1-e_{2}^{\wedge}}{80}, \dfrac{1-e_{2}^{\wedge}}{2 \sigma_{0}}, \dfrac{1-e_{2}^{\wedge}}{2 \sigma_{1}}\};$
\end{itemize}
We shall take $\alpha$ as the small parameter in this study.

These assumptions determine a subset $\mathcal{P}^{*}$ of $T^{*} (\mathbb{H} \setminus \{0\}) \times T^{*} (\R^{3} \setminus \{0\})$. Using the coordinates defined above, we may identify $\mathcal{P}^{*}$ to a subset of $\T^{7} \times \R^{7}$.

With these assumptions, $|\mathcal{F}_{Kep}|$ is bounded, and the two Keplerian frequencies 
$$\nu_{1}=\dfrac{\partial \mathcal{F}_{Kep}}{\partial P_{0}}=\sqrt{\dfrac{2 f_{1} (L_{2})}{\mu_{1}}} \sim 1, \,\,\nu_{2}=\dfrac{\partial \mathcal{F}_{Kep}}{\partial L_{2}}= \dfrac{\mu_{2}^{3} M_{2}^{2} \mathcal{P}_{0} }{2 L_{2}^{3} \sqrt{2 \mu_{1} f_{1}(L_{2})}} \sim \alpha^{\frac{3}{2}}$$
do not appear at the same order of $\alpha$. This enables us to proceed, as in Jefferys-Moser \cite{JefferysMoser} or F\'ejoz \cite{QuasiMotionPlanar}, by eliminating the dependence of $\mathcal{F}$ on each of the fast angles $\vartheta_{0}, l_{2}'$ without imposing any arithmetic condition on the two frequencies. 

Let $T_{\C}=\C^7/\Z^7 \times \C^7$ and $T_s$ be the $s$-neighborhood of $\T^7 \times \R^7:=\R^{7}/\Z^{7} \times \R^{7}$ in $T_{\C}$. Let $T_{\textbf{A},s}$ be the s-neighborhood $\{z \in T_{s}: \exists x \in \textbf{A}, \hbox{s.t. } |z-x| < s\}$ of a set $\textbf{A} \subset \T^7 \times \R^7$ in $T_s$. The complex modulus of a transformation is the maximum of the complex moduli of its components. We use $| \cdot |$ to denote the modulus of either one. A real analytic function and its complex extension are denoted by the same notation.

\begin{prop}\label{prop: fast angle averaging} For any $n \in \N$, there exists an analytic Hamiltonian $\mathcal{F}^n: \mathcal{P}^{*} \to \R$ independent of the fast angles $\vartheta_1, l_{2}'$, and an analytic symplectomorphism $\phi^n: \mathcal{P}^{*} \supset \tilde{\mathcal{P}}^{n} \to \phi^n (\tilde{\mathcal{P}}^{n})$, $|\alpha|^{\frac{3}{2}}$-close to the identity, such that
             $$|\mathcal{F} \circ \phi^n- \mathcal{F}^n| \le C_{0}\, |\alpha |^{\frac{3(n+2)}{2}}$$
on $T_{\tilde{\mathcal{P}}^{n}, s^{''}}$ for some open set $\tilde{\mathcal{P}}^{n} \subset \mathcal{P}^{*}$, and some real number $s^{''}$ with $0< s^{''} < s$, such that locally the relative measure of $\tilde{\mathcal{P}}^{n}$ in $\mathcal{P}^{*}$ tends to 1 when $\alpha$ tends to $0$. 
\end{prop}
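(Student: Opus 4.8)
The plan is to eliminate the two fast angles $\vartheta_1$ (oscillator phase) and $l_2'$ (outer mean anomaly) from $\mathcal{F}$ by a sequence of averaging transformations, exploiting the crucial fact established above that the two Keplerian frequencies $\nu_1 \sim 1$ and $\nu_2 \sim \alpha^{3/2}$ are separated in powers of $\alpha$. This frequency separation is what allows us to average \emph{asynchronously}, one angle at a time, without imposing any Diophantine condition on $(\nu_1,\nu_2)$ — the content of the Jefferys–Moser technique. First I would write $\mathcal{F} = \mathcal{F}_{Kep} + \mathcal{F}_{pert}$ and recall that $\mathcal{F}_{Kep}$ depends only on $\mathcal{P}_0$ and $L_2$, so its flow is linear in the two fast angles with the above frequencies; the perturbation $\mathcal{F}_{pert} = K.S.^*(\|Q_1\| F_{pert})$ is of relative size $O(\alpha^{3/2})$ on $\mathcal{P}^{*}$, which is exactly the order at which $\phi^n$ is close to the identity.

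\emph{The iteration.} The core is a single averaging step implemented by a time-one Lie flow. To remove a fast angle whose frequency is $\nu$ from a term of size $\varepsilon$, one solves the cohomological equation $\{\mathcal{F}_{Kep}, \chi\} + \widetilde{R} = \overline{R}$, where $\overline{R}$ is the angular average and $\widetilde{R}$ the zero-average part; since $\mathcal{F}_{Kep}$ is linear in the angles this is solved termwise in the Fourier series, each nonzero harmonic $k$ being divided by $i\langle k,\nu\rangle$. The point of eliminating $\vartheta_1$ \emph{first} (it has the large frequency $\nu_1 \sim 1$) is that the small divisors $\langle k,\nu\rangle$ are then bounded below by a constant independent of $\alpha$, so the generating function $\chi$ does not lose size; eliminating $l_2'$ afterwards costs only a factor $\nu_2^{-1} \sim \alpha^{-3/2}$, and since we started from a term already of size $\alpha^{3/2}$ past the Keplerian part, the net remainder after one full pass is smaller by a genuine factor of $\alpha^{3/2}$. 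I would estimate the Lie transform and its effect on the analyticity width using the standard Cauchy inequality $|\{f,\chi\}| \le C s^{-1} |f|\,|\chi|$ on the complexified domain $T_{\tilde{\mathcal{P}}^n, s''}$, shrinking the width by a definite fraction at each of the $n$ steps so that the total shrinkage is controlled and $s'' < s$ survives. Iterating $n$ times in lockstep with the two-angle elimination produces a Hamiltonian $\mathcal{F}^n$ independent of both $\vartheta_1$ and $l_2'$ with remainder $O(\alpha^{3(n+2)/2})$, the exponent $n+2$ reflecting the initial $\alpha^{3/2}$ of the perturbation plus $n$ gained factors of $\alpha^{3/2}$ (the shift by $2$ absorbing the base order in the chosen normalization).

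\emph{The measure statement.} The domain $\tilde{\mathcal{P}}^n$ must exclude a neighborhood of the configurations where the averaging breaks down. Two sources of trouble appear: the resonances $\langle k,\nu\rangle = 0$ between the two fast frequencies (a measure-zero but non-removable set in frequency space, to be excised at each order in a shrinking neighborhood), and, more importantly for this problem, the loss of regularity of the regular coordinates near the degenerate locus where some $I_i = 0$, i.e.\ where the $1:1:1:1$ oscillator orbit becomes planar in one of the $(z_i,w_i)$ planes. Because $F_{pert}$ depends on the physical positions, and the regularization introduces the auxiliary quaternionic phases, I would bound the Fourier coefficients of $\mathcal{F}_{pert}$ in terms of $\alpha$ uniformly away from a shrinking neighborhood of these singular sets, showing that the complement has relative measure $o(1)$ as $\alpha \to 0$. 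Taking $\tilde{\mathcal{P}}^n$ to be $\mathcal{P}^{*}$ minus the union of these excluded neighborhoods then gives a set whose relative measure tends to $1$.

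\emph{The main obstacle} I expect is not the asynchronous elimination itself — that is by now standard and the frequency separation makes the small divisors harmless — but rather the uniform analytic control of $\mathcal{F}_{pert}$ on the complexified domain near the collision locus $\{z=0\}$ after regularization. The whole point of the paper is to work through, rather than avoid, inner double collisions, so one must verify that the averaged terms and their generating functions extend analytically across $\{z=0\}$ and that the width $s''$ and the constant $C_0$ can be chosen uniformly in $\alpha$ despite the singular time change $\|Q_1\|\,d\tau = dt$. Establishing these uniform bounds, and simultaneously exhibiting the open set $\tilde{\mathcal{P}}^n$ of relative measure tending to $1$ on which they hold, is the technical heart of the proposition.
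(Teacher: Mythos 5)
Your top-level strategy --- asynchronous elimination of $\vartheta_1$ first, then $l_2'$, iterated $n$ times, exploiting $\nu_1\sim 1 \gg \nu_2 \sim \alpha^{3/2}$ --- is indeed the paper's strategy, but two of your quantitative claims are wrong and one mechanism is misdescribed, and together they undermine the proof. First, the input size: $\mathcal{F}_{pert}$ is $O(\alpha^{3})$ on a complex neighborhood of $\mathcal{P}^{*}$, not $O(\alpha^{3/2})$ as you assert; the Legendre expansion of $K.S.^{*}\left(\|Q_1\|F_{pert}\right)$ starts at the $P_2$-term, of order $\alpha^{3}$ (this uniform bound, which you correctly flag as necessary but do not prove, is exactly the paper's Appendix A). Your exponent accounting is then internally inconsistent: a base of $\alpha^{3/2}$ plus $n$ gains of $\alpha^{3/2}$ would give $\alpha^{3(n+1)/2}$, and the parenthetical ``shift by $2$ absorbing the base order'' is precisely the thing that needs justification; with the correct base $\alpha^{3}=\alpha^{3\cdot 2/2}$ the exponent $3(n+2)/2$ comes out with no fudging. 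Second, the divisors: you propose to solve $\{\mathcal{F}_{Kep},\chi\}+\widetilde{R}=\overline{R}$ termwise, ``each nonzero harmonic $k$ being divided by $i\langle k,\nu\rangle$'', and claim these are bounded below by a constant. That is false: for mixed harmonics $k=(k_1,k_2)$ with $k_1\neq 0$, the quantity $k_1\nu_1+k_2\nu_2$ comes arbitrarily close to zero. The asynchronous scheme solves only the one-angle equation $\nu_1\partial_{\vartheta_1}\hat{H}=\widetilde{\mathcal{F}}_{pert,1}$ (divisors $ik_1\nu_1$ alone), and the price is the uncancelled cross-term $\nu_2\,\partial\hat{H}/\partial l_2'$, which is pushed into the remainder and dominates it, of size $\nu_2|\hat{H}|\sim \alpha^{3/2}\cdot\alpha^{3}=\alpha^{9/2}$. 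This cross-term, which your accounting never identifies, is exactly why each full pass gains only a factor $\alpha^{3/2}$ instead of the quadratic gain of ordinary nonresonant averaging.

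The measure statement is where your proposal genuinely departs from (and falls short of) the paper. Because no mixed divisors ever appear in the asynchronous scheme, there are no resonances $\langle k,\nu\rangle=0$ to excise; your plan to remove shrinking neighborhoods of resonances ``at each order'' contradicts the Jefferys--Moser technique you invoke at the outset, and if such excisions were really needed you would owe a Diophantine-type measure estimate that you do not supply. In the paper, $\tilde{\mathcal{P}}^{n}$ is obtained simply by shrinking $\mathcal{P}^{*}$ from its boundary by a distance $O(\alpha^{3/2})$, so that each near-identity time-1 flow maps the current domain into the previous one; this boundary shrinkage alone is what makes the relative measure tend to $1$ as $\alpha\to 0$. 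Similarly, the degeneracies $I_i=0$ of the regular coordinates that you propose to excise are already excluded in the definition of $\mathcal{P}^{*}$ (well-definedness of the coordinates is one of its defining hypotheses), so they contribute no additional measure loss to $\tilde{\mathcal{P}}^{n}$.
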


\begin{proof} We first eliminate the dependence of $\vartheta_{1}$ in $\mathcal{F}$ up to a remainder of order $|\alpha |^{\frac{3(n+2)}{2}}$ and then eliminate the dependence of $l_{2}'$ up to a remainder of order $|\alpha |^{\frac{3(n+2)}{2}}$. 

The elimination procedure is standard and consists in analogous successive steps.
The first step is to eliminate $\vartheta_{1}$ up to a $O(\alpha^{\frac{9}{2}})$-remainder. To this end we look for an auxiliary analytic Hamiltonian $\hat{H}$. We denote its Hamiltonian vector field and its flow by $X_{\hat{H}}$ and $\phi_t$ respectively. The required symplectic transformation is the time-1 map $\phi_1(:= \phi_t|_{t=1})$ of $X_{\hat{H}}$. We have
$$\phi_1^*\mathcal{F}=\mathcal{F}_{Kep}+(\mathcal{F}_{pert}+X_{\hat{H}} \cdot \mathcal{F}_{Kep})+\mathcal{F}^1_{comp,1},$$
for some remainder $\mathcal{F}^{1}_{comp, 1}$. In the above, $X_{\hat{H}}$ is seen as a derivation operator. 

Let
$$\langle \mathcal{F}_{pert}\rangle_1 = \dfrac{1}{2 \pi} \int_{0}^{2 \pi} \mathcal{F}_{pert} \,d \vartheta_1$$
be the average of $\mathcal{F}_{pert}$ over $\vartheta_1$, and $\widetilde{F}_{pert,1}=\mathcal{F}_{pert}-\langle \mathcal{F}_{pert}\rangle_1$ be its zero-average part. 

In order to have $\mathcal{F}^{1}_{comp, 1}=O(\alpha^{\frac{9}{2}})$, we choose $\hat{H}$ to solve the equation
$$\nu_1 \partial_{l_1} \hat{H}=\widetilde{\mathcal{F}}_{pert,1},$$
which is satisfied if we set
$$\hat{H}= \dfrac{1}{\nu_1}  \int_0^{l_1}  \widetilde{\mathcal{F}}_{pert,1}\, d l_1,$$
which is of order $O(\alpha^{3})$ in $T_{\mathcal{P}^{*}, s}$ for sufficiently small $s$. By Cauchy inequality, $|X_{\hat{H}}|=O(\alpha^{3})$ in $T_{\mathcal{P}^{*}, s-s_{0}}$ for $0< s_{0} < s/2$.  We shrink the domain from $T_{\mathcal{P}^{*},s- s_0}$ to $T_{\mathcal{P}^{**},s- s_0-s_1}$, where $\mathcal{P}^{**}$ is an open subset of $\mathcal{P}^{*}$, so that $\phi_1(T_{\mathcal{P}^{**},s- s_0-s_1}) \subset T_{\mathcal{P}^{*},s-s_0}$, with $s- s_0-s_1>0$. The time-1 map $\phi_1$ of  $X_H$ thus satisfies $|\phi_1-Id | \le \hbox{Cst } |\alpha |^{3}$ in $T_{\mathcal{P}^{**},s- s_0- s_1}$, and hence
$$\phi^*_1 \mathcal{F} =\mathcal{F}_{Kep}+\langle \mathcal{F}_{pert} \rangle_1 + \mathcal{F}^1_{comp,1},$$
is defined on $T_{\mathcal{P}^{**},s- s_0- s_1}$ and satisfies
 $$\mathcal{F}_{comp,1}^1 = \int_0^1 (1-t) \phi_t^*(X_{\hat{H}}^2 \cdot \mathcal{F}_{Kep})dt+ \int_0^1 \phi_t^*(X_{\hat{H}} \cdot \mathcal{F}_{pert}) dt  - \nu_2 \dfrac{\partial \hat{H}}{\partial l_2} \le \hbox{Cst } |X_{\hat{H}}| (|\widetilde{\mathcal{F}}_{pert,1}|+|\mathcal{F}_{pert}|) + \nu_2 | \hat{H}| \le \hbox{Cst } | \alpha |^{\frac{9}{2}}.$$
The first step of eliminating $\vartheta_{0}$ is completed. 

Analogously, we may eliminate the dependence of the Hamiltonian on $\vartheta_{1}$ up to order $O(\alpha^{\frac{3(n+2)}{2}})$ for any chosen $n \in \Z_{+}$. The Hamiltonian $\mathcal{F}$ is then analytically conjugate to
$$\mathcal{F}_{Kep}+\langle \mathcal{F}_{pert} \rangle_1 + \langle \mathcal{F}^{1}_{comp,1} \rangle_{1}+ \cdots +\langle \mathcal{F}^{1}_{comp,n-1}\rangle_{1} + \mathcal{F}^{1}_{comp,n}, $$
in which the expression $\mathcal{F}_{Kep}+\langle \mathcal{F}_{pert} \rangle_1 + \langle \mathcal{F}^{1}_{comp,1} \rangle_{1}+ \cdots + \langle \mathcal{F}^{1}_{comp,n-1}\rangle_{1}$ is independent of $\vartheta_{1}$, and $\mathcal{F}^{1}_{comp,n}$ is of order $O \left(\alpha^{\frac{3(n+2)}{2}}\right)$. 

The elimination of $l_{2}'$ in $\mathcal{F}_{Kep}+\langle \mathcal{F}_{pert} \rangle_1 + \langle \mathcal{F}^{1}_{comp,1} \rangle_{1}+ \cdots +\langle \mathcal{F}^{1}_{comp,n-1}\rangle_{1} + \mathcal{F}^{1}_{comp,n}$ is analogous. Let 
$\langle \cdot \rangle$ denotes the averaging of a function over $\vartheta_{1}$ and $l_{2}'$. The Hamiltonian generating the transformation of the first step of eliminating $l_{2}'$ is
$$\dfrac{1}{\nu_2} \int_0^{l_2} (\langle F_{pert} \rangle_1- \langle F_{pert} \rangle) d l_2 =O (\alpha^{\frac{3}{2}}).$$
This implies that the transformation is $|\alpha|^{\frac{3}{2}}$-close to the identity.

The Hamiltonian $\mathcal{F}$ is thus conjugate to
$$\mathcal{F}_{Kep}+\langle \mathcal{F}_{pert} \rangle + \langle \mathcal{F}_{comp,1} \rangle+ \cdots +\langle\mathcal{F}_{comp,n-1}\rangle+\mathcal{F}_{comp,n},$$
in which 
$$\langle \mathcal{F}_{pert} \rangle = \dfrac{1}{ 4 \pi^{2}}\int_{0}^{2 \pi} \int_{0}^{2 \pi} \mathcal{F}_{pert} d \vartheta_{1} d l_{2}'. $$
The \emph{$n$-th order secular system}
$$\mathcal{F}_{sec}^{n}:=\langle \mathcal{F}_{pert} \rangle + \langle \mathcal{F}_{comp,1} \rangle+ \cdots +\langle \mathcal{F}_{comp,n-1}\rangle$$
(by convention $\mathcal{F}_{pert}=\mathcal{F}_{comp, 0}$) is independent of $\vartheta_{1}, l_{2}'$, and the {remainder} $\mathcal{F}_{comp,n}$ is of order $O(\alpha^{\frac{3(n+2)}{2}})$ in $T_{\tilde{\mathcal{P}}^{n}, s^{''}}$ for some open subset $\tilde{\mathcal{P}}^{n} \subset \mathcal{P}^{*}$ and some $0<s^{''}<s$ both of which are obtained by a finite number of steps of the elimination procedure. In particular, the set $\tilde{\mathcal{P}}^{n}$ is obtained by shrinking $\mathcal{P}^{*}$ from its boundary by a distance of  $O(\alpha^\frac{3}{2})$. We may thus set 
$$\mathcal{F}^{n}:=\mathcal{F}_{Kep}+\mathcal{F}_{sec}^{n}.$$
\end{proof}

\subsection{Elimination of $g_{2}$}\label{Subsection: Elimination of g_{2}}
The system $\mathcal{F}^{n}$ has 7 degrees of freedom. It is invariant under the action of the group $\T^{3} \times SO(3)$ consisting in the fibre circle action of $K. S.$, the $\T^{2}$-action of the fast angles, and simultaneous rotations of positions and momenta in the phase space.  Standard symplectic reduction procedure leads to a 2 degrees of freedom reduced system with no other obvious symmetries. It is \emph{a priori} not integrable. 

To obtain an integrable approximating system of $\mathcal{F}$, one proceeds classically in the following way: The function $\mathcal{F}_{pert}$ is naturally an analytic function of $a_{1}, a_{2}, \dfrac{Q_{1}}{a_{1}}, \dfrac{Q_{2}}{a_{2}}$ ({by replacing $Q_{i}$ by $a_{i}\, \dfrac{Q_{i}}{a_{i}}, i=1,2$}). By the relation $a_{2}=\dfrac{a_{1}}{\alpha}$, it is also an analytic function of $a_{1}, \alpha, \dfrac{Q_{1}}{a_{1}}, \dfrac{Q_{2}}{a_{2}}$. The calculation of $F_{sec}^{n}$ from the power series of $\mathcal{F}_{pert}$ in $\alpha$ naturally leads to the expansion
$$\mathcal{F}_{sec}^{n}=\sum^{\infty}_{i=2}\, \mathcal{F}_{sec}^{n, i}\, \alpha^{i+1}.$$
 By construction, 
 $$\mathcal{F}_{sec}^{n}-\mathcal{F}_{sec}^{1}=\langle \mathcal{F}_{comp,1} \rangle+ \cdots +\langle\mathcal{F}_{comp,n-1}\rangle+\mathcal{F}_{comp,n}=O(\alpha^{\frac{9}{2}}),$$
which implies
$$\mathcal{F}_{sec}^{n, 2}=\mathcal{F}_{sec}^{1, 2},\,\, \mathcal{F}_{sec}^{n, 3}=\mathcal{F}_{sec}^{1, 3},\quad \forall n \in \Z_{+}.$$

The following lemma shows {in particular} that $\mathcal{F}_{quad}=\mathcal{F}_{sec}^{1,2}$ has an additional circle symmetry, and is thus integrable. 
\begin{lem}\label{lem: quadrupolar regularized system}
The function $\mathcal{F}_{sec}^{1,2}$ depends non-trivally on $G_{2}$, but is independent of $g_{2}$. The function $\mathcal{F}_{sec}^{1,3}$ depends non-trivially on $g_{2}$. 
\end{lem}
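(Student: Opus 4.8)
The plan is to read off $\mathcal{F}_{sec}^{1,2}$ and $\mathcal{F}_{sec}^{1,3}$ from the multipole expansion of $\mathcal{F}_{pert}$ and to exploit the symmetry of the averaged outer Keplerian motion. Write $u=\|Q_1\|/\|Q_2\|$ and let $S$ be the angle between $Q_1$ and $Q_2$. Applying the generating function $(1-2t\cos S+t^2)^{-1/2}=\sum_{k\ge 0}t^kP_k(\cos S)$ to the two summands of $F_{pert}$, with $t=\sigma_0 u$ and $t=-\sigma_1 u$ respectively, gives
$$F_{pert}=-\dfrac{\mu_1 m_2}{\|Q_2\|}\sum_{k\ge 2}\bigl[\sigma_0^{\,k-1}+(-1)^k\sigma_1^{\,k-1}\bigr]\,u^k\,P_k(\cos S),$$
the $k=0$ terms cancelling against the subtracted $1/\|Q_2\|$, and the dipole $k=1$ term vanishing because $\sigma_0^0-\sigma_1^0=0$ (the Jacobi structure). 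Since the $k$-th term scales as $a_1^k/a_2^{k+1}\propto\alpha^{k+1}$, after averaging over the inner fast angle and over $l_2'$ the quadrupole ($k=2$) term yields $\mathcal{F}_{sec}^{1,2}$ and the octupole ($k=3$) term yields $\mathcal{F}_{sec}^{1,3}$. By Lemma \ref{lem: physical ellipse} the inner average is a genuine average over a Keplerian ellipse.

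For the quadrupole I would average over the inner motion first and over the outer mean anomaly second, writing the result as a tensor contraction. Using $2\,\|Q_1\|^2P_2(\cos S)=3(Q_1\cdot Q_2)^2/\|Q_2\|^2-\|Q_1\|^2$, the $k=2$ term equals, up to a positive constant, the contraction $A:B$ of the inner second-moment tensor $A:=\langle Q_1\otimes Q_1\rangle$, which is built from inner variables only and is therefore manifestly independent of $g_2$, with the symmetric traceless tensor $B:=\bigl\langle(3\,Q_2\otimes Q_2-\|Q_2\|^2\,\mathrm{Id})/(2\|Q_2\|^5)\bigr\rangle_{l_2'}$. Because $l_2'-l_2$ depends only on the inner variables and on $L_2$, averaging over $l_2'$ coincides with the ordinary mean-anomaly average; evaluating it in the outer orbital frame $(\hat P_2,\hat Q_2,\hat n_2)$ (pericentre, latus rectum, orbit normal) I would use the identities $\langle\cos^2 f_2/r_2^3\rangle=\langle\sin^2 f_2/r_2^3\rangle=\tfrac12\,a_2^{-3}(1-e_2^2)^{-3/2}$, which force the two in-plane eigenvalues of $B$ to coincide. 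Hence $B=\tfrac14\,a_2^{-3}(1-e_2^2)^{-3/2}\bigl(\mathrm{Id}-3\,\hat n_2\otimes\hat n_2\bigr)$ depends only on $a_2,e_2$ and on the orbit normal $\hat n_2$, and not on $g_2$; this gives the independence of $\mathcal{F}_{sec}^{1,2}$ from $g_2$. The non-trivial dependence on $G_2$ then follows from the prefactor $(1-e_2^2)^{-3/2}=(L_2/G_2)^3$ and from $\cos i_2=H_2/G_2$ in $\hat n_2$: since $A\ne 0$, the contraction $A:B$ is not constant in $G_2$.

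For the octupole I would run the same scheme, but now the outer average is of an object odd in $Q_2$. From $2\,\|Q_1\|^3P_3(\cos S)=5(Q_1\cdot Q_2)^3/\|Q_2\|^3-3\,\|Q_1\|^2(Q_1\cdot Q_2)/\|Q_2\|$, the $k=3$ term becomes the contraction of the inner rank-three moment with the outer averages of $Q_2^{\otimes 3}/\|Q_2\|^7$ and $Q_2/\|Q_2\|^5$. Unlike the quadrupole case, the Kepler ellipse is not symmetric under $Q_2\mapsto -Q_2$, so these averaged tensors acquire a nonzero component along the pericentre direction $\hat P_2$; concretely this reduces to the non-vanishing of integrals such as $\langle\cos f_2/r_2^4\rangle=e_2\,a_2^{-4}(1-e_2^2)^{-5/2}$, which is nonzero precisely because the hypotheses fix $e_2>e_2^\vee>0$. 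Rotating $g_2$ rotates $\hat P_2$ within the outer plane and therefore changes the contraction, so $\mathcal{F}_{sec}^{1,3}$ depends non-trivially on $g_2$ whenever the octupole coefficient $\sigma_0^2-\sigma_1^2$ is nonzero, i.e. for $m_0\ne m_1$. The main obstacle is precisely this last step: the $g_2$-independence at quadrupole order is soft, following from the in-plane isotropy of $B$ together with the $g_2$-independence of $A$, whereas the octupole statement requires an honest computation of the averaged odd moments to certify that the pericentre direction genuinely enters and that the rank-one and rank-three contributions do not cancel. One must also keep track of the non-standard eccentric-longitude measure of the inner average, though this affects only the inner tensors and never the $g_2$/$G_2$ structure.
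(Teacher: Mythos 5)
Your route is genuinely different from the paper's. The paper computes nothing directly: it first proves the identity $\mathcal{F}_{sec}^{1,i}=K.S.^{*}\left(a_{1}\,F_{sec}^{1,i}\right)$ relating the regularized averages to the ordinary secular terms (using $a_{1}\,dl_{1}=\|Q_{1}\|\,du_{1}$ and the fact that $\vartheta_{0},l_{2}'$ differ from $u_{1},l_{2}$ only by phase shifts), and then simply cites \cite{LaskarBoue} for both properties of $F_{sec}^{1,2}$ and $F_{sec}^{1,3}$. You instead prove the properties yourself by a multipole/tensor computation, while handling the regularization by essentially the same two observations the paper uses (your remarks that the weighted eccentric-longitude measure only deforms the inner moment tensors, and that averaging over $l_{2}'$ coincides with averaging over $l_{2}$). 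For the quadrupole your argument is complete and correct: the averaged outer tensor $B=\tfrac14 a_{2}^{-3}(1-e_{2}^{2})^{-3/2}\left(\mathrm{Id}-3\,\hat n_{2}\otimes\hat n_{2}\right)$ is in-plane isotropic, so $A:B$ is $g_{2}$-free, and the prefactor $(L_{2}/G_{2})^{3}$ together with the bounded, not-identically-zero bracket $\mathrm{tr}A-3\,\hat n_{2}^{T}A\hat n_{2}$ gives non-trivial $G_{2}$-dependence (this last point deserves one more sentence, since $\hat n_{2}$ also moves with $G_{2}$, but it is easily closed). This is more self-contained than the paper's citation, and it buys something real: your expansion makes explicit that the octupole coefficient is $\sigma_{0}^{2}-\sigma_{1}^{2}=\frac{m_{0}-m_{1}}{m_{0}+m_{1}}$, so the second claim of the lemma requires $m_{0}\neq m_{1}$ --- a hypothesis the paper never states, and without which the claim is false, since then $F_{sec}^{1,3}\equiv 0$.

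However, the octupole half of your proof has a genuine gap, which you yourself flag: you must show that the rank-one and rank-three outer moments, both proportional to $e_{2}$ and attached to the pericentre frame, do not cancel in the contraction with the inner moments. "The pericentre direction enters the formula" does not by itself imply "the contraction depends non-trivially on $g_{2}$." The paper disposes of exactly this point by quoting the explicit formula of \cite{LaskarBoue}; indeed the coplanar specialization used later in the proof of Lemma \ref{lem: torsion transformation}, namely $\mathcal{F}_{sec}^{1,3}=-\frac{15}{64}\,\frac{(4e_{1}+3e_{1}^{3})e_{2}}{(1-e_{2}^{2})^{5/2}}\cos(g_{1}-g_{2})$, is precisely the kind of evaluation your scheme still owes: it suffices to carry out your averaged contraction at one convenient configuration (say coplanar orbits) and check that the result is a nonzero multiple of $\cos(g_{1}-g_{2})$; analyticity then gives non-trivial $g_{2}$-dependence everywhere. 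Until that single finite computation is done, the second statement of the lemma is set up but not proved.
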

\begin{proof}
Consider the system $F_{pert}$, which is naturally a function of $l_{1}$ (resp. $u_{1}$), the mean anomaly (resp. eccentric anomaly) of the inner Keplerian ellipse, when $l_{1}$ (resp. $u_{1}$) is well defined. We define
$$F_{sec}^{1}=\dfrac{1}{4 \pi^{2}}\int_{0}^{2 \pi} \int_{0}^{2 \pi} F_{pert} \,d l_{1} d l_{2}.$$
and develop $F_{sec}^{1}$ in powers of $\alpha=\dfrac{a_{1}}{a_{2}}$:
$$F_{sec}^{1}=\sum^{\infty}_{i=2}\, F_{sec}^{1, i}\, \alpha^{i+1}.$$
We see in \cite{LaskarBoue} that $F_{sec}^{1,2}$ depends non-trivially on $G_{2}$ (through $e_{2}$), but is independent of $g_{2}$, and $F_{sec}^{1,3}$ depends non-trivially on $g_{2}$. 

To conclude, it suffices to notice that, aside from degenerate inner ellipses, we have 
$$\mathcal{F}_{sec}^{1,2}=K.S.^{*} \left(a_{1} \cdot F_{sec}^{1,2}\right), \,\, \mathcal{F}_{sec}^{1,3}=K.S.^{*} \left(a_{1} \cdot F_{sec}^{1,3}\right),$$
which are deduced from
\begin{align*}
K.S.^{*}\left(a_{1}\cdot F_{sec}^{1}\right) &= K.S.^{*}\left(\dfrac{1}{4 \pi^{2}} \int_{0}^{2 \pi} \int_{0}^{2 \pi} \|Q_{1}\|\, F_{pert} \,d u_{1} d l_{2}\right)\\
                                                           &= K.S.^{*}\left(\dfrac{1}{4 \pi^{2}} \int_{0}^{2 \pi} \int_{0}^{2 \pi} \|Q_{1}\|\, F_{pert} \,d \vartheta_{0} d l_{2}'\right)   \\
                                                           &= \dfrac{1}{4 \pi^{2}} \int_{0}^{2 \pi} \int_{0}^{2 \pi} K.S.^{*}\left(\|Q_{1}\|\, F_{pert}\right) \,d \vartheta_{0} d l_{2}'\\ 
                                                           &=\mathcal{F}_{sec}^{1}.                    
\end{align*}
In the above, we have used the following facts:
\begin{itemize}
\item $a_{1} \, d l_{1} = \|Q_{1}\|\, d u_{1}$; 
\item $\vartheta_{0}, l_{2}'$ differs from $u_{1}, l_{2}$ only by some phase shifts depending on neither of these angles.
\end{itemize}
\end{proof}

Better integrable approximating systems are obtained by eliminating the dependence of $g_{2}$ in $\mathcal{F}_{sec}^{n}$. Let $\nu_{quad, 2}=\dfrac{\partial \mathcal{F}_{quad}}{\partial g_{2}}$ be the frequency of $g_{2}$ in the system $\mathcal{F}_{quad}$. As a non-constant analytic function, $\nu_{quad, 2}$ is non-zero almost everywhere in $\tilde{\mathcal{P}}^{n}$, and the set $\check{\mathcal{P}}_{\varepsilon_{0}}^{n} \subset \tilde{\mathcal{P}}^{n}$ characterized by the condition $|\nu_{quad, 2}| > \varepsilon_{0}$ has relative measure tending to 1 in $\tilde{\mathcal{P}}^{n}$ when $\varepsilon_{0} \to 0$. We shall show in Subsection \ref{Subsection: Quadrupolar system and its dynamics} that, for small $\varepsilon$, the set $\check{\mathcal{P}}_{\varepsilon_{0}}^{n}$ contains the region of the phase space that we are interested in. After fixing $\varepsilon_{0}$, there exists an open subset $\hat{\mathcal{P}}_{\varepsilon_{0}}^{n} \subset \check{\mathcal{P}}_{\varepsilon_{0}}^{n}$ whose relative measure in $\check{\mathcal{P}}_{\varepsilon_{0}}^{n}$ tends to 1 when $\alpha \to 0$, and a symplectomorphism $\psi^{n'}: \hat{\mathcal{P}}_{\varepsilon_{0}}^{n} \to \psi^{n'}(\hat{\mathcal{P}}_{\varepsilon_{0}}^{n})$ which is $|\alpha|$-close to the identity, such that  
$$\psi^{n' *} \phi^{n *} \mathcal{F}=\mathcal{F}_{Kep}+\overline{\mathcal{F}_{sec}^{n, n'}}+\mathcal{F}_{pert}^{n,n'},$$
with $\overline{\mathcal{F}_{sec}^{n, n'}}=\alpha^{3} F_{quad} + O(\alpha^{4})$ invariant under the $SO(3)$-symmetry {and} independent of $\vartheta_{0}, l_{2}', g_{2}$ (thus integrable), and $\mathcal{F}_{pert}^{n,n'}=O(\alpha^{\min\{n'+1, \frac{3(n+2)}{2}\}})$. For any $(n, n'-2) \in \Z_{+}^{2}$, the function $\mathcal{F}_{Kep}+\overline{\mathcal{F}_{sec}^{n, n'}}$ is {always} (conjugate to) an integrable approximating system of $\mathcal{F}$.

The symplectomorphism $\psi^{n'}$ is constructed by successive steps of elimination of $g_{2}$ analogous to the proof of Prop \ref{prop: fast angle averaging}, and is dominated by $\psi^{3}$ when $\alpha$ is small enough. We shall describe the choice of $\psi^{3}$ more precisely when needed (Section \ref{Section: Transversality}). 

\section{Dynamics of the Integrable Approximating System} \label{Section: Dynamics of the Integrable Normal Forms}
For small enough $\alpha$ and large enough $n, n'$, the system $\psi^{n' *} \phi^{n *} \mathcal{F}$ (to which $\mathcal{F}$ is conjugate) is a small perturbation of the integrable approximating system $\mathcal{F}_{Kep}+\overline{\mathcal{F}_{sec}^{n, n'}}$ in which the fast motion is dominated by $\mathcal{F}_{Kep}$, while secular evolution of the (physical regularized) ellipses is governed by $\overline{\mathcal{F}_{sec}^{n, n'}}$. 

\subsection{Local Reduction procedure}
To prove Theorem \ref{Theo: Main}, we shall be only interested in those invariant tori of $\mathcal{F}_{Kep}+\overline{\mathcal{F}_{sec}^{n, n'}}$ close to double inner collisions, their geometry and their torsion. We first reduce the system by its known continuous symmetries.

After fixing $\mathcal{P}_{0}, L_{2} > 0$ (\emph{i.e.} fixing $a_{1}, a_{2}>0$) and being reduced by the Keplerian $\T^{2}$-action, the functions $\mathcal{F}_{Kep}+\overline{\mathcal{F}_{sec}^{n, n'}}$ are naturally defined on a subset of the direct product of the space of (inner) centered ellipses with fixed semi major axis with the space of (outer) Keplerian ellipses (\emph{i.e.} bounded orbits of the Kepler problem, which are possibly degenerate or circular ellipses with one focus at origin) with fixed semi major axis. The constant term $\mathcal{F}_{Kep}$ plays no role in the reduced dynamics and is omitted from now on. The circle fibres of $K. S.$ are further reduced out by considering the functions $\overline{\mathcal{F}_{sec}^{n, n'}}$ as defined on subsets of the \emph{secular space}, \emph{i. e.} the space of pairs of (possibly degenerate) Keplerian ellipses with fixed semi major axes. A construction originated by W. Pauli \cite{Pauli} (See also \cite[Lecture 4]{Albouy}) shows that, when no further restrictions are imposed on the two ellipses, the secular space is homeomorphic to $S^{2} \times S^{2} \times S^{2} \times S^{2}$. 

By fixing the direction of the total angular momentum $\vec{C}$ to be vertical (which implies in particular that the two node lines of the two Keplerian ellipses coincide), we restrict the $SO(3) \times SO(2)$-symmetry of $\overline{\mathcal{F}_{sec}^{n, n'}}$ to a (Hamiltonian) $\T^{2}$-symmetry with moment mapping $(C:=\|\vec{C}\|, G_{2})$. Fixing $C$ and $G_{2}$ and then reducing by the $\T^{2}$-symmetry accomplishes the reduction procedure. We assume that $C$ and $G_{2}$ are fixed properly so that the reduced space is 2-dimensional. 

By the triangle inequality, the norm of the angular momentum of the inner Keplerian ellipse $G_{1}$ satisfies 
$$G_{1} \ge |C-G_{2}| \triangleq G_{1, min}.$$
When $C \neq G_{2}$, this inequality bounds $e_{1}$ away from $1$. The inequality becomes equality exactly when $G_{1}=C-G_{2}$ or $G_{1}=G_{2}-C$, corresponding respectively to direct and retrograde coplanar motions. 

A local analysis of the reduced space near coplanar motions suffices for our purpose (c.f. Figure \ref{ContinuityNearDegenerate}). {The reduction procedure of the (free) $SO(2) \times SO(2)$-symmetry for non-coplanar pairs of ellipses is just a combination of Jacobi's node reduction together with the identification of all the outer ellipses with the same angular momentum but different pericentre directions; the coplanar pairs with fixed inner and outer angular momenta are reduced to a point by identifying all the pericentre directions of the inner and outer ellipses. We thus obtain the following: }

When $C \neq G_{2}$, locally near the set $\{G_{1}=G_{1, min}\}$, the reduced space is a disc containing the point corresponding to $G_{1}=G_{1, min}$. The rest of the disc is foliated by the closed level curves of $G_{1}$ (for $G_{1} > G_{1, min}$).

When $C=G_{2}$, for small $G_{1}$, the two ellipses are coplanar only if the inner ellipse degenerates (to a line segment), corresponding to a point after reduction. The reduced space is a disc containing this point; it also contains a line segment corresponding to degenerate inner Keplerian ellipses slightly inclined with respect to the outer ellipse. 

\subsection{Coordinates on the reduced spaces}
To analyze the reduced dynamics of $\overline{\mathcal{F}_{sec}^{n, n'}}$, we need to find appropriate coordinates in the reduced space. For this purpose, to start with the regular coordinates for the inner motion is not convenient, {because they do not naturally descend to Darboux coordinates in the quotient}. Instead, we use Delaunay coordinates $(L_{1}, l_{1}, G_{1}, g_{1}, H_{1}, h_{1})$ for the inner (physical) Keplerian ellipse (with modified masses), which may equally be seen as Darboux coordinates on an open subset of $V^{0}$ where all these elements are well-defined for the inner Keplerian ellipse. 

We observe that fixing $\mathcal{P}_{0}$ (defined in Subsection \ref{Subsection: Regular Coordinates}) and $L_{2}$ is equivalent to fixing $L_{1}(\mathcal{P}_{0}, L_{2})$ and $L_{2}$, and defines a 10-dimensional submanifold of $V^{0} \times T^{*} \R^{3}$, on which the symplectic form 
$$d L_{1} \wedge d l_{1} + d G_{1} \wedge d g_{1} + d H_{1} \wedge d h_{1} + d L_{2} \wedge d l_{2} + d G_{2} \wedge d g_{2} + d H_{2} \wedge d h_{2}$$
restricts to 
$$d G_{1} \wedge d g_{1} + d H_{1} \wedge d h_{1} + d G_{2} \wedge d g_{2} + d H_{2} \wedge d h_{2}$$
with, {thanks to the modification of the masses}, the latter's kernel containing exactly the vectors tangent to the (regularized) {inner} orbits at each point (c.f. Lemma \ref{lem: physical ellipse}), and thus descends to the quotient space by the Keplerian $\T^{2}$-action. We thus obtain a set of Darboux coordinates in a dense open subset of the secular space. 

To reduce out the $SO(3)$-symmetry, we use Jacobi's elimination of the nodes: we fix $\vec{C}$ vertical\footnote{This choice of direction of $\vec{C}$ is convenient, but not essential: the reduced dynamics is the same regardless of the direction of $\vec{C}$.} (which implies that $h_{1}=h_{2}+\pi$ and $H_{1}+H_{2}=C$) and reduce by the conjugate $SO(2)$-symmetry to get a set of Darboux coordinates $(G_{1}, g_{1}, G_{2}, g_{2})$ in the quotient space. Due to the lack of the node lines, the angles $g_{1}, g_{2}$ are not well-defined when the inner ellipse degenerates. Nevertheless, these coordinates are sufficient for what follows.
The $SO(2)$-symmetry of rotating the outer ellipse in its orbital plane can be symplectically reduced by fixing $G_{2}$ {in addition}. The pair $(G_{1}, g_{1})$ is a set of Darboux coordinates in an open subset of the 2-dimensional quotient space.

\subsection{The quadrupolar system and its dynamics}\label{Subsection: Quadrupolar system and its dynamics}

The system $\overline{\mathcal{F}_{sec}^{n, n'}}$ is an $O(\alpha^{4})$-perturbation of $\alpha^{3} \mathcal{F}_{quad}$. Let us first analyze the quadrupolar dynamics, \emph{i.e.} the dynamics of $\mathcal{F}_{quad}$.  Let $\mu_{quad}=\dfrac{m_{0} m_{1} m_{2}}{m_{0}+m_{1}}$. In coordinates $(G_{1}, g_{1})$ with parameters $L_{1}, L_{2}, C, G_{2}$, the function $$\mathcal{F}_{quad}(G_{1}, g_{1}; L_{1}, L_{2}, C, G_{2})$$
is equal to
\small
\begin{align*}
&-\dfrac{\mu_{quad} L_{2}^{3}}{8 G_2^3} \left\{3\dfrac{G_1^2}{L_1^2} \Bigl[1+\dfrac{(C^2-G_1^2-G_2^2)^2}{4 G_1^2 G_2^2}\Bigr]+ 15 \Bigl(1-\dfrac{G_1^2}{L_1^2}\Bigr)\Bigl[\cos^2{g_1}+\sin^2{g_1} \dfrac{(C^2-G_1^2-G_2^2)^2}{4 G_1^2 G_2^2}\Bigr] -6\Bigl(1-\dfrac{G_1^2}{L_1^2}\Bigr)-4\right\},
\end{align*}
\normalsize
which differs from $F_{quad}$ by a non-essential factor $a_{1}$. We have separated the variables from the parameters of a system by a semicolon. The dynamics of $F_{quad}$ has been extensively studied by Lidov and Ziglin in \cite{LidovZiglin}, from which the dynamics of $\mathcal{F}_{quad}$ is deduced directly. 

\begin{rem}The relation between $\mathcal{F}_{quad}$ and $F_{quad}$ (see the proof of Lemma \ref{lem: quadrupolar regularized system}) also justifies the fact that $F_{quad}$ can be extended analytically through degenerate inner ellipses.
\end{rem}

For $|C-G_{2}|$ positive but small, locally the reduced secular space is foliated by closed curves around the point $\{G_{1}=G_{1, min}\}$ corresponding to coplanar motions. This is deduced from \cite{LidovZiglin} by noticing that $(G_{1}, g_{1})$ are regular coordinates outside the point $\{G_{1}=G_{1, min}\}$. (c.f. Figure \ref{ContinuityNearDegenerate})

When $C=G_{2}$, the Hamiltonian $\mathcal{F}_{quad}$ takes the form
\small
\begin{align*}
 \mathcal{F}_{quad}=-\dfrac{\mu_{quad} L_{2}^{3}}{8 G_2^3} \left\{3\dfrac{G_1^2}{L_1^2} \Bigl[1+\dfrac{G_1^2}{4  G_2^2}\Bigr] + 15 \Bigl(1-\dfrac{G_1^2}{L_1^2}\Bigr)\Bigl[\cos^2{g_1}+\sin^2{g_1} \dfrac{G_1^2}{4 G_2^2}\Bigr] -6\Bigl(1-\dfrac{G_1^2}{L_1^2}\Bigr)-4\right\}
\end{align*}
\normalsize
which admits the symmetry $(G_{1}, g_{1}) \sim (-G_{1}, \pi-g_{1})$ and is a well-defined analytic function on the cylinder 
$$\mathcal{D}:=\{(G_{1}, g_{1}) \in \R \times \R/2 \pi \Z: -\min\{L_{1}, C+G_{2}\} < G_{1} < \min\{L_{1}, C+G_{2}\} \},$$
which is a (branched) double cover of a neighborhood of the line segment $\{G_{1}=0\}$ in the reduced space. Moreover, the 2-form $d G_{1} \wedge d g_{1}$ extends uniquely to a (non-degenerate) 2-form invariant under the symmetry $(G_{1}, g_{1}) \sim (-G_{1}, \pi-g_{1})$ on $\mathcal{D}$, and makes $\mathcal{D}$ into a symplectic manifold. The flow of the Hamiltonian function $\mathcal{F}_{quad}(G_{1}, g_{1}; C=G_{2}, L_{1}, L_{2})$ in $\mathcal{D}$ is thus interpreted as the lift of the quadrupolar flow in the reduced space. Therefore, rather than choosing coordinates in the reduced space and studying the quadrupolar flow directly, we shall study the dynamics of $\mathcal{F}_{quad}(G_{1}, g_{1}; C=G_{2}, L_{1}, L_{2})$ in $\mathcal{D}$ on which we have global Darboux coordinates $(G_{1}, g_{1})$ (c.f. Figure \ref{ReducedCriticalQuadrupolarSpace}). 

Let $I$ be the mutual inclination of two Keplerian orbits. The condition $C=G_{2}$ implies $\cos I =-\dfrac{G_{1}}{2 C}$. In particular, when $G_{1}=0$, the limiting orbital plane of the inner Keplerian ellipse is perpendicular to the outer orbital plane. The coplanar case is thus characterized by $(G_{1}=0, g_{1}=0) \sim (G_{1}=0, g_{1}=\pi)$, which are two elliptic equilibria for the lifted flow in $\mathcal{D}$ surrounded by periodic orbits. These periodic orbits meet the line $\{G_{1}=0\}$ transversely with an angle independent of $\alpha$. Being reduced by the discrete symmetry $(G_{1}, g_{1}) \sim (-G_{1}, \pi-g_{1})$, the two elliptic equilibria in $\mathcal{D}$ descend to an elliptic equilibrium $E$ surrounded by periodic orbits in the reduced space, and these periodic orbits meet the set $\{G_{1}=0\}$ transversely. {The $\Z_{2}$-action $(G_{1}, g_{1}) \sim (-G_{1}, \pi-g_{1})$ is free everywhere except for the two points $(G_{0}=0, g_{1}=\pm \dfrac{\pi}{2})$. These two points descend to two singular points in the quotient space. } (c.f. Figure \ref{ContinuityNearDegenerate})

\begin{figure}
\centering
\includegraphics[width=2.5in]{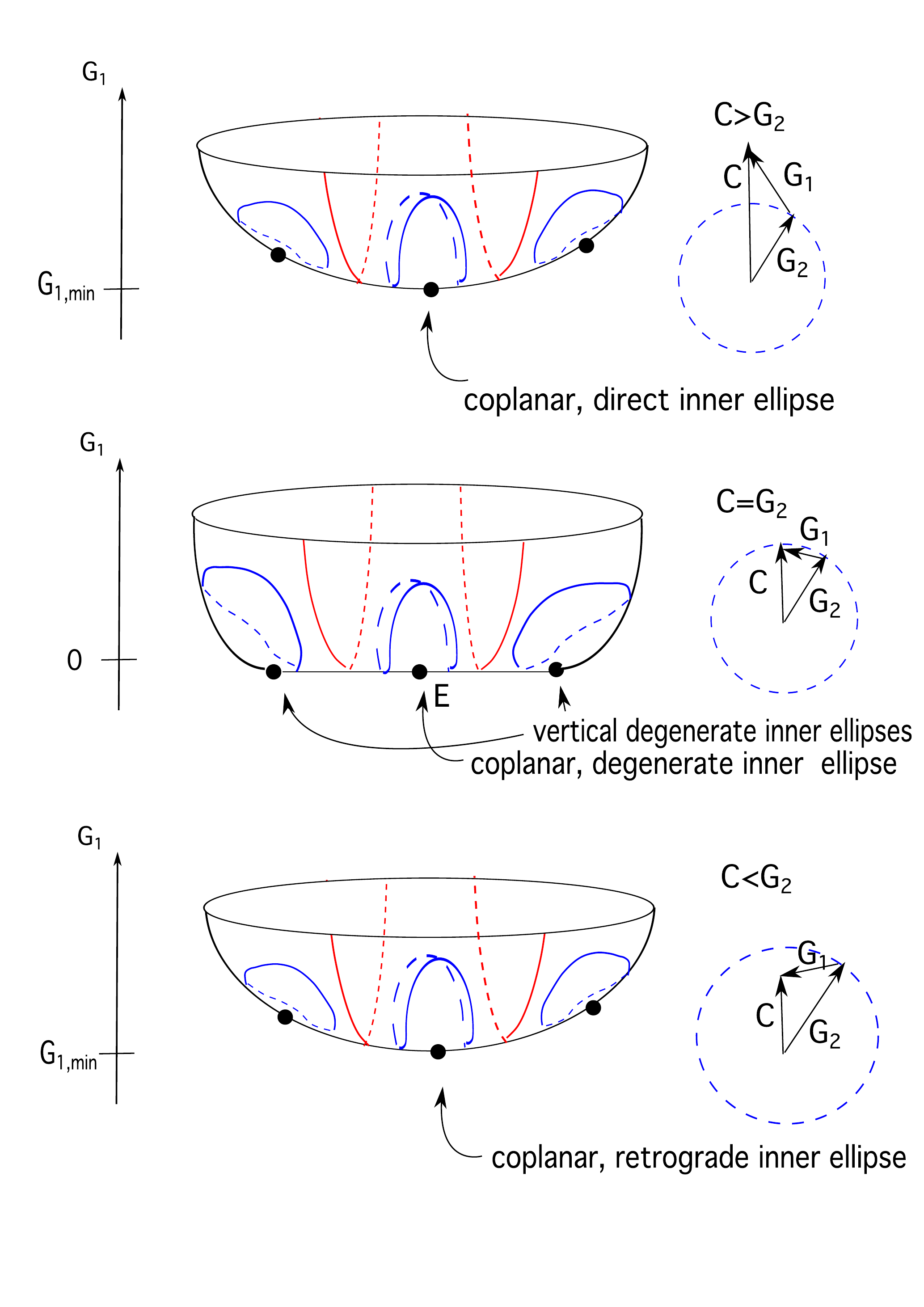} 
\caption{The reduced quadrupolar flow near $G_{1}=G_{1,max}$. We are interested only in the regions near the depicted coplanar equilibria inside the separatrices. } \label{ContinuityNearDegenerate}
\end{figure}

\begin{figure}
\centering
\includegraphics[width=4.5in]{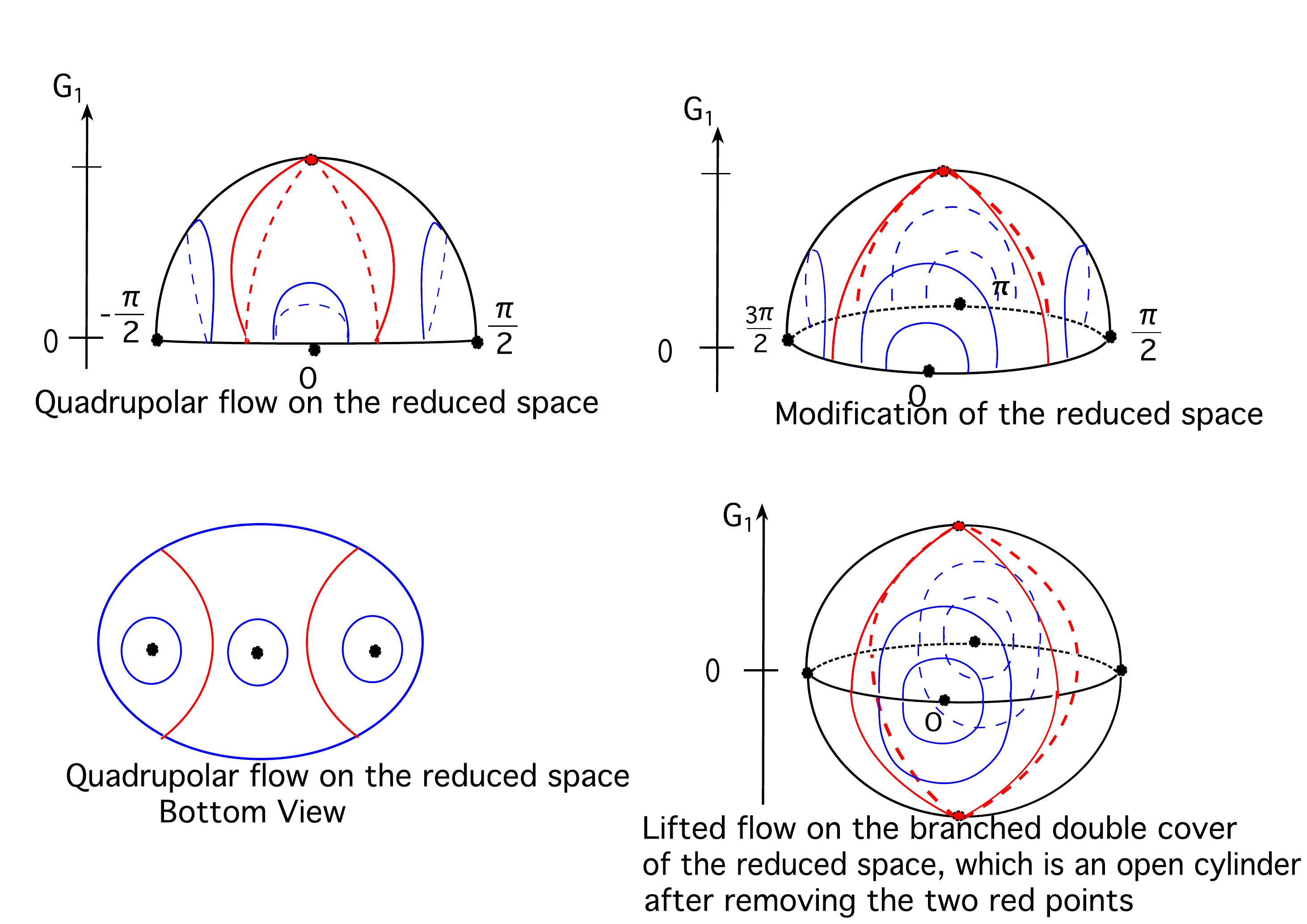} 
\caption{The flow foliation on the reduced space for $C=G_{2}$ and its branched double cover. {Note that after reduction, $G_{1}=\min\{L_{1}, C+G_{2}\}$ also descends to a point.}} \label{ReducedCriticalQuadrupolarSpace}
\end{figure}

In Subsection \ref{Subsection: Elimination of g_{2}}, we have defined the set $\check{\mathcal{P}}_{\varepsilon_{0}}^{n} \subset \tilde{\mathcal{P}}^{n}$ by the condition $|\nu_{quad, 2}|=\left|\dfrac{\partial \mathcal{F}_{quad}}{\partial G_{2}}\right| > \varepsilon_{0}$. The function $\dfrac{\partial \mathcal{F}_{quad}}{\partial G_{2}}\Bigl|_{C=G_{2}}$ being regarded as a function on $\mathcal{D}$, we find by setting $G_{1}=0$ that
$$\dfrac{\partial \mathcal{F}_{quad}}{\partial G_{2}}\Bigl|_{C=G_{2}, G_{1}=0}=-\dfrac{15 \mu_{quad} L_{2}^{3}}{8 G_{2}^{4}} (3-4 \cos^{2} g_{1}).$$ 
This shows (by passing to the quotient of the symmetry $(G_{1}, g_{1}) \sim (-G_{1}, \pi-g_{1})$) that for $\varepsilon_{0}$ small enough, after being reduced by the $\T^{3} \times SO(3)$-symmetry of the quadrupolar system, $\check{\mathcal{P}}_{\varepsilon_{0}}^{n}$ contains a neighborhood of $E$ (whose size is independent of $\alpha$). 

The following lemma enable us to deduce the local dynamics of $\mathcal{F}_{sec}^{n, n'}$ from that of $\mathcal{F}_{quad}$.

\begin{lem} The equilibrium $E$ is of Morse type. 
\end{lem}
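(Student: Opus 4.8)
The plan is to verify that $E$ is a nondegenerate critical point of $\mathcal{F}_{quad}$ by a direct Hessian computation. Rather than choosing coordinates on the reduced space, I would work in the branched double cover $\mathcal{D}$, on which $\mathcal{F}_{quad}(G_{1}, g_{1}; C=G_{2}, L_{1}, L_{2})$ is given by the explicit formula above and $(G_{1}, g_{1})$ are global Darboux coordinates. The equilibrium $E$ lifts to the $\Z_{2}$-orbit $\{(G_{1}=0, g_{1}=0),\,(G_{1}=0, g_{1}=\pi)\}$, and since the symmetry $(G_{1}, g_{1}) \sim (-G_{1}, \pi-g_{1})$ is free at these two points (the only fixed points being $(G_{1}=0, g_{1}=\pm\pi/2)$), the quotient projection is a local diffeomorphism near them. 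Hence $E$ is of Morse type if and only if the Hessian of the lifted Hamiltonian is nondegenerate at $(0,0)$; by the $\Z_{2}$-invariance of $\mathcal{F}_{quad}$ the same conclusion then holds at $(0,\pi)$.

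First I would confirm that $(0,0)$ is a critical point. Pulling out the nonvanishing factor $-\mu_{quad}L_{2}^{3}/(8 G_{2}^{3})$, the Hamiltonian is a constant multiple of a function whose Taylor expansion in $G_{1}$ contains only even powers of $G_{1}$, so $\partial_{G_{1}}\mathcal{F}_{quad}$ vanishes on $\{G_{1}=0\}$; and the restriction to $\{G_{1}=0\}$ is proportional to $15\cos^{2}g_{1}-10$, whose $g_{1}$-derivative vanishes at $g_{1}=0$. Thus both first partials vanish at $(0,0)$.

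Next I would compute the three second partials at $(0,0)$. Collecting the coefficient of $G_{1}^{2}$ in the expansion and evaluating at $g_{1}=0$ gives $\partial_{G_{1}G_{1}}\mathcal{F}_{quad}$ proportional to $-6/L_{1}^{2}$; differentiating $15\cos^{2}g_{1}-10$ twice gives $\partial_{g_{1}g_{1}}\mathcal{F}_{quad}$ proportional to $-30$; and the cross term vanishes by parity, since $\partial_{g_{1}}\mathcal{F}_{quad}$ is an even function of $G_{1}$ multiplied by $\sin 2g_{1}$, so its $G_{1}$-derivative is zero on $\{G_{1}=0\}$. The Hessian is therefore diagonal, with determinant a strictly positive multiple of $1/L_{1}^{2}$ (the common nonzero prefactor enters squared). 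Hence the Hessian is nondegenerate, and in fact definite, consistent with $E$ being an elliptic equilibrium surrounded by periodic orbits.

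The computation is entirely routine; the only point requiring care is the reduction bookkeeping. One must check that $(0,0)$ is genuinely a regular (free) point of the $\Z_{2}$-action — distinct from the orbifold points $(G_{1}=0, g_{1}=\pm\pi/2)$ — so that the Hessian computed in the global Darboux coordinates of $\mathcal{D}$ legitimately descends to the Hessian at $E$ in the reduced space, and that the nonvanishing prefactor cannot affect nondegeneracy. No genuine obstacle arises.
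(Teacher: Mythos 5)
Your proposal is correct and follows the same route as the paper: pass to the double cover $\mathcal{D}$, where $(G_{1},g_{1})$ are global Darboux coordinates, and check nondegeneracy of the Hessian of $\mathcal{F}_{quad}$ at the lift $(G_{1}=0,g_{1}=0)$ of $E$ (the paper simply records that this Hessian determinant equals $\frac{45}{8}\frac{\mu_{quad}^{2}L_{2}^{6}}{G_{2}^{6}L_{1}^{2}}$, which agrees with your diagonal entries proportional to $-6/L_{1}^{2}$ and $-30$ times the prefactor). Your additional bookkeeping — criticality of $(0,0)$, vanishing of the cross term by parity, and freeness of the $\Z_{2}$-action away from $(0,\pm\pi/2)$ so that the Hessian descends — is exactly what the paper leaves implicit.
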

\begin{proof} It is enough to investigate the equilibrium $(G_{1}=0, g_{1}=0)$ of the lifted flow in $\mathcal{D}$, at which the Hessian of $\mathcal{F}_{quad}(G_{1}, g_{1}; C=G_{2}, L_{1}, L_{2})$ equals to $\dfrac{45}{8} \dfrac{ \mu_{quad}^{2} L_{2}^{6}}{G_{2}^{6} L_{1}^{2}}$.
\end{proof}

By continuity, the coplanar equilibria $\{G_{1}=G_{1, min}\}$ are of Morse type for small $G_{1, min}=|C-G_{2}|$. Consequently, $\overline{\mathcal{F}_{sec}^{n, n'}}$ is orbitally conjugate to $\mathcal{F}_{quad}$ in neighborhoods of these coplanar equilibria for small enough $\alpha$.

\section{Application of a KAM theorem} \label{Section: Application of KAM theorem}
\subsection{Iso-energetic KAM theorem}
For $p \ge 1$, consider the phase space $\R^{p} \times
\T^p=\{(I,\theta)\}$ endowed with the
standard symplectic form $d I \wedge d \theta$. All
mappings are assumed to be analytic except when explicitly mentioned
otherwise.

Let $\delta > 0$, $\varpi \in \R^p$. Let $B_{\delta}^{p}$ be the $p$-dimensional closed ball with radius $\delta$ centered at the origin in $\R^{p}$, and $N_{\varpi}= N_{\varpi}
(\delta)$ be the space of Hamiltonians $N \in C^{\omega}(\T^p
\times B_{\delta}^{p},\R)$ of the form
$$N= c + \langle \varpi,I \rangle+\langle A(\theta), I \otimes I \rangle + O(|I|^{3}),
$$ 
with $c\in \R$ and $A \in C^{\omega}(\T^p, \R^{p} \otimes \R^p)$; the
Lagrangian torus $\T^p \times \{0\}$ is an invariant
Lagrangian $\varpi$-quasi-periodic torus of $N$ with energy $c$.

Let $\bar{\gamma} > 0$ and $\bar{\tau} > p-1$, and let $|\cdot|_{2}$ be the $\ell^2$-norm on
$\Z^{p}$. Let
$HD_{\bar\gamma,\bar\tau}$ be the set of vectors $\varpi$ satisfying the
following homogeneous Diophantine conditions:
$$\forall k\in \Z^p \setminus \{0\}, \quad |k \cdot \varpi | \geq \bar\gamma\, |k|_{2}^{-\bar\tau}.$$ 
 {Let $\|\cdot\|_{s}$ be the $s$-analytic norm of an analytic function, \emph{i.e.}, the supremum norm of its analytic extension to 
$$\{x \in \C^{p} \times \C^{p}/\Z^{p}: \exists \, y \in \R^{p} \times \T^{p}, \hbox{ such that } |x-y| \le s \}$$ 
of its (real) domain in the complexified space $\C^{p} \times \C^{p}/\Z^{p}$ with ``radius'' $s$.} 

\begin{theo} \label{Theo: KAM} Let $\varpi^o \in
  HD_{\bar\gamma,\bar\tau}$ and $N^o \in N_{\varpi^o}$. For
 some $d>0$ small enough,  there exists $\varepsilon>0$ such that for every
  Hamiltonian $N \in C^\omega(\T^p \times B_\delta^{p})$ such that
  $$\|N-N^o\|_d \leq \varepsilon,$$
 there exists a vector $(\varpi, c)$ satisfying the following
  properties:
  \begin{itemize}
  \item the map {$N \mapsto (\varpi, c) $ is of class $C^\infty$ and
    is $\varepsilon$-close to $(\varpi^o, c^{o})=(\varpi(N^{o}), c(N^{o}))$} in the
    $C^\infty$-topology;
  \item if $\varpi (N) \in HD_{\bar\gamma,\bar\tau}$, $N$ is 
    symplectically analytically conjugate to a Hamiltonian $${c(N) + \langle \varpi(N),I \rangle+\langle A(\theta)(N), I \otimes I \rangle + O(|I|^{3})\in
    N_{\varpi}.}$$ 
  \end{itemize}
 Moreover, $\varepsilon$ can be chosen of the form $\hbox{Cst} \, \bar\gamma^{k}$
  (for some $\hbox{Cst}>0$, $k\geq 1$) when $\bar\gamma$ is small.
\end{theo}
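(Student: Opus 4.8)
The plan is to follow the parametrized, ``counterterm'' approach to KAM theory due to Herman and F\'ejoz, in which one never assumes a priori that the perturbed frequency is Diophantine: instead one treats the frequency $\varpi$ as an external parameter, conjugates $N$ to a normal form up to a constant frequency drift $\beta(N,\varpi)\in\R^p$, and only afterwards pins down $\varpi$ by solving $\beta(N,\varpi)=0$. Concretely, using the space of normal forms $N_\varpi$ already defined, I would seek for each $N$ near $N^o$ and each $\varpi$ near $\varpi^o$ a germ of real-analytic symplectomorphism $\Psi=\Psi(N,\varpi)$ close to the identity and a vector $\beta=\beta(N,\varpi)$ such that
$$\Psi^*N-\langle\beta,I\rangle\in N_\varpi,$$
with $\Psi(N^o,\varpi^o)=\mathrm{Id}$ and $\beta(N^o,\varpi^o)=0$. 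This ``hypothetical conjugacy'' decouples the two difficulties of the problem: the small-divisor analysis, which produces $\Psi$ for Diophantine $\varpi$, and the inversion of the frequency map, which produces $\varpi$ through the implicit function theorem.

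For fixed $\varpi\in HD_{\bar\gamma,\bar\tau}$ I would construct $\Psi$ by a quadratically convergent Newton scheme. At each step one writes the current Hamiltonian as an $N_\varpi$-part plus an error, retains the jet of the error in $I$ along the torus, and solves the homological equation $(\varpi\cdot\partial_\theta)\,F=g-\langle g\rangle_\theta$ for the generating data of the next symplectomorphism, where $g$ is the $\theta$-dependent part of the current error and $\langle g\rangle_\theta$ its average over $\theta$; the average obstruction in the term linear in $I$ is absorbed into the counterterm $\beta$, and the obstruction in the $\theta$-independent term updates the energy $c$. The small divisors $k\cdot\varpi$ entering the Fourier solution are controlled by the bound $|k\cdot\varpi|\ge\bar\gamma\,|k|_2^{-\bar\tau}$, which costs a fixed power of the analyticity width at each step. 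I would then run the standard analytic estimates on a nested sequence of complex strips $\{x:\exists\,y\in\R^p\times\T^p,\ |x-y|\le s_j\}$ with $s_j\downarrow s_\infty>0$, checking that the quadratic gain of the Newton step dominates the polynomial loss from the small divisors and the shrinking domains, so that $\Psi=\lim_j\Psi_j$ converges in the $\|\cdot\|_{s_\infty}$-norm whenever $\|N-N^o\|_d\le\varepsilon$ with $\varepsilon$ small.

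Next I would establish the regularity of the outputs in the data. The maps $(N,\varpi)\mapsto(\Psi,\beta,c)$ are a priori defined and smooth only over the closed parameter set $HD_{\bar\gamma,\bar\tau}$, since the construction uses the Diophantine condition; but the Newton estimates are uniform enough to yield Whitney-$C^\infty$ dependence there, and a Whitney extension theorem then produces a genuine $C^\infty$ extension of $\beta$ and $c$ to a full neighborhood of $(N^o,\varpi^o)$. Because $N^o$ is itself a normal form with frequency $\varpi^o$, one gets $\beta(N^o,\varpi^o)=0$ and $\partial_\varpi\beta(N^o,\varpi^o)$ invertible (it is $\varepsilon$-close to $\pm\mathrm{Id}$); the implicit function theorem then gives a $C^\infty$ map $N\mapsto\varpi(N)$, $\varepsilon$-close to $\varpi^o$, solving $\beta(N,\varpi(N))=0$, with $c(N)$ the associated constant term. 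This is the first bullet. For the second, if the value $\varpi(N)$ happens to lie in $HD_{\bar\gamma,\bar\tau}$, then the conjugacy $\Psi(N,\varpi(N))$ from the previous step is an honest analytic symplectomorphism and $\beta=0$, so $\Psi^*N\in N_{\varpi(N)}$, the asserted analytic normal form.

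The main obstacle, as always, is the interplay between the small divisors and the convergence of the scheme: one must quantify the loss of analyticity width caused by each homological equation and verify that the quadratic convergence of Newton's method survives the $\bar\gamma^{-1}$-type and $s_j^{-\bar\tau}$-type losses. Tracking these constants also yields the final quantitative claim, since each homological step inflates the error by a factor of order $\bar\gamma^{-1}$, so closing the iteration forces the admissible perturbation size to scale as $\varepsilon=\mathrm{Cst}\,\bar\gamma^{k}$ for a suitable $k\ge1$ when $\bar\gamma$ is small. The secondary subtlety is the Whitney-smoothness and extension step, which requires differentiating the KAM estimates in the parameter $\varpi$ uniformly over $HD_{\bar\gamma,\bar\tau}$; this is routine but bookkeeping-heavy, and is precisely what makes the frequency map $N\mapsto\varpi(N)$ genuinely $C^\infty$ rather than merely continuous.
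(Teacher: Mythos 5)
Your proposal is correct and follows essentially the same route as the paper: the paper does not prove Theorem \ref{Theo: KAM} itself but refers to F\'ejoz's ``hypothetical conjugacy theorem'' and its complete proof in the cited references, which is precisely the Herman--F\'ejoz counterterm scheme you describe (twisted conjugacy with frequency counterterm $\beta$ via a Newton iteration for Diophantine $\varpi$, Whitney-smooth dependence and extension in the parameters, implicit function theorem to solve $\beta(N,\varpi)=0$, and an honest analytic conjugacy exactly when $\varpi(N)$ is Diophantine, with the $\mathrm{Cst}\,\bar\gamma^{k}$ threshold coming from tracking the small-divisor constants). No substantive divergence from the cited argument.
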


This theorem is an analytic version of the $C^{\infty}$ ``hypothetical conjugacy
theorem'' of~\cite{FejozStability} (for Lagrangian tori). We refer to \cite{FejozMoser} for its complete proof. 

We now consider families of Hamiltonians $N^o_\iota$ and $N_\iota$ depending  analytically (actually $C^1$-smoothly would suffice) on some parameter
$\iota \in B_1^{p}$. Recall that for each $\iota$, $N^o_\iota$ is
of the form
$$N^o_\iota= c^o_\iota + \langle \varpi^o_\iota,I \rangle +\langle A_{\iota}(\theta), I \otimes I \rangle
+ O(|I|^{3}).$$

With the aim of finding zero-energy invariant tori of $\mathcal{F}$ (recall that it is only on $\mathcal{F}=0$ that the dynamics of $\mathcal{F}$ extends that of $F$), we now deduce an iso-energetic KAM theorem from Theorem \ref{Theo: KAM}. Denote by $[\cdot]$ the projective class of a vector.  Let 
$$D^o = \left\{(c^o_\iota,[\varpi^o_\iota]):\, c^o_\iota=c^{o}_{0}=c^{o}, \,
  \varpi_\iota^o \in HD_{2\bar\gamma,\bar\tau}, \,
  \iota \in B_{1/2}^{p}\right\};$$ 
note that the factor $2$ in the
Diophantine constant $2\bar\gamma$ is meant to take care of the fact
that along a given projective class, locally the constant $\bar\gamma$
may worsen a little (we will apply Theorem~\ref{Theo: KAM} with Diophantine
constants $(\bar\gamma,\bar\tau)$). {By Theorem~\ref{Theo: KAM}, the mapping $
\iota \mapsto (\varpi_{\iota}, c_{\iota})=(\varpi(N_{\iota}), c(N_{\iota}))$ is $C^{\infty}$ and is $\varepsilon$-close to $(\varpi^{o}_{\iota}, C^{o}_{\iota})$.}

\begin{cor}[Iso-energetic KAM theorem] \label{cor: isoe kam}
  Assume that the map 
  $$B_1^{p} \rightarrow \R \times \mathbf{P}(\R^{p}), \quad \iota \mapsto
  (c^{o}_\iota,[\varpi^o_\iota])$$ is a diffeomorphism onto
  its image.   If $\varepsilon$ is small enough and if for some $d >0$, we have $\|N_\iota-N^o_\iota\|_{d} <
  \varepsilon$ for each $\iota$, the following holds:

For every $(c^{o},\nu^{o}) \in D^o$, there exists a unique $\iota \in
  B^{p}_1$ such that $(c_{\iota}, [\varpi_{\iota}])=(c^{o},\nu^{o})$, and $N_\iota$ is symplectically (analytically) conjugate to some
  $N'_\iota \in N_{\varpi_\iota,\beta_\iota}$ of the form
  $$N'_\iota= c^{o} + \langle \varpi_\iota,I \rangle+\langle A_{\iota} (\theta), I \otimes I \rangle + O(|I|^{3}).$$ 
Moreover, there exists $\bar{\gamma}>0, \bar{\tau}>p-1$, such that the set
  $$\{\iota \in B_{1/2}^{p}: \, c_{\iota}=c^{o},\, \varpi_\iota \in
  HD^o\}$$ has positive $(p-1)$-dimensional Lebesgue measure.
\end{cor}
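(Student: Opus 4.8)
The plan is to transfer the iso-energetic nondegeneracy encoded in the diffeomorphism hypothesis from the unperturbed family $N^o_\iota$ to the perturbed family $N_\iota$ by means of the $C^\infty$-closeness furnished by Theorem~\ref{Theo: KAM}, then to solve exactly for the parameter $\iota$ realizing a prescribed energy $c^o$ and frequency direction $\nu^o$, and finally to estimate the measure of admissible $\iota$ using that Diophantine directions fill a set of positive (indeed nearly full) measure. Throughout I abbreviate $G^o(\iota)=(c^o_\iota,[\varpi^o_\iota])$ and $G(\iota)=(c_\iota,[\varpi_\iota])$, the latter being the map produced by Theorem~\ref{Theo: KAM}.

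First I would observe that, since the closeness in Theorem~\ref{Theo: KAM} is in the $C^\infty$-topology, $G$ is $\varepsilon$-close to $G^o$ in the $C^1$-topology. As $G^o$ is, by hypothesis, a diffeomorphism onto its image on $\bar B_1^p$, and being a diffeomorphism onto one's image is an open condition in the $C^1$-topology, $G$ is again a diffeomorphism onto its image once $\varepsilon$ is small enough. Given $(c^o,\nu^o)\in D^o$ there is, by definition, some $\iota^*\in B_{1/2}^p$ with $G^o(\iota^*)=(c^o,\nu^o)$ and $\varpi^o_{\iota^*}\in HD_{2\bar\gamma,\bar\tau}$. Since $\iota^*$ lies in the interior of $B_{3/4}^p$ and $G^o$ is an open map, $(c^o,\nu^o)$ lies in the interior of $G^o(\bar B_{3/4}^p)$; as $G$ is $C^0$-close to $G^o$ (in particular on $\partial B_{3/4}^p$), a degree argument shows $(c^o,\nu^o)\in G(B_{3/4}^p)$, and the injectivity of $G$ yields a unique $\iota\in B_1^p$ with $G(\iota)=(c^o,\nu^o)$; moreover $\iota$ is $O(\varepsilon)$-close to $\iota^*$.

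Next I would upgrade this $\iota$ to an actual KAM torus. From $[\varpi_\iota]=\nu^o=[\varpi^o_{\iota^*}]$ one may write $\varpi_\iota=\lambda\,\varpi^o_{\iota^*}$, and the chain of estimates $\varpi_\iota\approx\varpi^o_\iota\approx\varpi^o_{\iota^*}$ (the first from Theorem~\ref{Theo: KAM}, the second from continuity of $\iota\mapsto\varpi^o_\iota$ together with $\iota\approx\iota^*$) forces $\lambda$ close to $1$, so that $|\lambda|\ge \tfrac12$ for $\varepsilon$ small. Because the homogeneous Diophantine condition is scaling-covariant, $|k\cdot\varpi_\iota|=|\lambda|\,|k\cdot\varpi^o_{\iota^*}|\ge \tfrac12\cdot 2\bar\gamma\,|k|_2^{-\bar\tau}=\bar\gamma\,|k|_2^{-\bar\tau}$, i.e.\ $\varpi_\iota\in HD_{\bar\gamma,\bar\tau}$; this is precisely the role of the factor $2$ in the definition of $D^o$. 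The conclusion of Theorem~\ref{Theo: KAM} then provides the analytic symplectic conjugacy of $N_\iota$ to a normal form, whose constant is $c_\iota=c^o$, which proves the first assertion.

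Finally, for the measure statement I would fix $\bar\tau>p-1$ and invoke the standard count: the set of directions in $\mathbf{P}(\R^p)$ admitting a representative in $HD_{\bar\gamma,\bar\tau}$ has complement of $(p-1)$-measure $O(\bar\gamma)$, since for each $k\neq 0$ the excluded tube around the hyperplane $\{k\cdot\varpi=0\}$ has measure $\lesssim \bar\gamma\,|k|_2^{-\bar\tau-1}$ and $\sum_{k\neq 0}|k|_2^{-\bar\tau-1}$ converges exactly because $\bar\tau+1>p$; hence these good directions have positive measure for $\bar\gamma$ small. The perturbed energy slice $\Sigma:=\{\iota\in B_{1/2}^p:\ c_\iota=c^o\}$ is, for $\varepsilon$ small, a $(p-1)$-dimensional submanifold ($c^o$ being a regular value of $\iota\mapsto c_\iota$ by $C^1$-closeness to the regular value $c^o$ of $\iota\mapsto c^o_\iota$), and $\iota\mapsto[\varpi_\iota]$ restricts on $\Sigma$ to a diffeomorphism onto a region of positive $(p-1)$-measure in $\mathbf{P}(\R^p)$. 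Pulling the good directions back through this diffeomorphism, and using that $|\varpi_\iota|$ is bounded above and below on $\Sigma$ to pass from Diophantine directions to Diophantine vectors, produces the desired positive-measure set $\{\iota\in B_{1/2}^p:\ c_\iota=c^o,\ \varpi_\iota\in HD_{\bar\gamma,\bar\tau}\}$. The conceptual hard part is the persistence of the iso-energetic nondegeneracy under perturbation: everything hinges on $G$ remaining a diffeomorphism, which is why the genuinely $C^\infty$ (not merely $C^0$) closeness in Theorem~\ref{Theo: KAM} is indispensable; once that is secured, the solvability for $\iota$ is a soft degree argument and the final measure count is routine.
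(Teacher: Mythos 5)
Your proposal is correct and follows essentially the same route as the paper's proof: persistence of the frequency--energy diffeomorphism under the $C^1$-closeness from Theorem~\ref{Theo: KAM}, a covering argument to solve $(c_\iota,[\varpi_\iota])=(c^o,\nu^o)$ on a slightly larger ball, the scaling argument exploiting homogeneity of the Diophantine condition (the role of the factor $2$), and pulling back a positive-measure set of Diophantine data through the perturbed diffeomorphism. Your write-up is in fact somewhat more explicit than the paper's (notably the degree argument and the passage from Diophantine directions to Diophantine vectors via the bounds on $|\varpi_\iota|$), but these are presentational differences, not a different method.
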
  
\begin{proof} 
From the hypothesis, the image of the restriction to $\{\iota: \, c^{o}_{\iota}=c^{o}\}$ of the mapping $\iota \mapsto \varpi^{o}_{\iota}$ is a $(p-1)$-dimensional smooth manifold, diffeomorphic to a subset of $\mathbf{P}(\R^{p})$ with non-empty interior, hence it contains a positive measure set of Diophantine vectors. Therefore there exists $\bar{\gamma}>0, \bar{\tau} >  p-1$, such that the set $D^o$ has positive {$(p-1)$}-dimensional measure. 

Moreover, $D^{o} \subset D' =\left\{(c^{o},[\varpi_\iota]): \,
  \varpi_\iota \in HD_{\bar\gamma,\bar\tau}, \,
  \iota \in B_{2/3}^{p}\right\}$. Indeed, if $(c^{o}, [\varpi^{o}_{\iota^{o}}]) \in D^{o}, \iota^{o} \in B_{1/2}$, then there exists $\iota' \in B_{2/3}$ such that $(c^{o}, [\varpi^{o}_{\iota^{o}}])=(c^{o}, [\varpi_{\iota'}])$. If $\varepsilon$ is small enough, $\varpi_{\iota'}$ is close enough to $\varpi^{o}_{\iota^{o}} \in HD_{2\bar{\gamma}, \bar{\tau}}$, hence belongs to $HD_{\bar{\gamma}, \bar{\tau}}$, and $(c^{o}, [\varpi_{\iota'}]) \in D'$.
If $\varepsilon$ is small, the mapping $\iota \mapsto (c_\iota,[\varpi_\iota])$ is $C^{1}$-close to $\iota \mapsto (c^{o}_\iota,[\varpi^o_\iota])$, hence it is a diffeomorphism, and the image of its restriction to $B_{2/3}^{p}$ contains the set $D'$.
  
The first assertion then follows from Theorem~\ref{Theo: KAM}. Since the map $\iota \mapsto   (c_{\iota},[\varpi_\iota])$ is smooth, the pre-image of a set of positive $(p-1)$-Lebesgue measure has positive $(p-1)$-dimensional Lebesgue measure. 
\end{proof}

\begin{condition} \label{isoenergetically non-degenerate}
When an integrable Hamiltonian $K^o=K^{o}(I)$ depends only on the action variables $I$, we may set $N^o_\iota (I) := K^o(\iota + I)$. The iso-energetic non-degeneracy of $N^{o}_{\iota}$ is just the non-degeneracy of the bordered Hessian
$$
\mathscr{H}^{B}(K^{o})(I)=
\begin{bmatrix}
 0 & {K^{o}}'_{I_{1}} & \cdots & {K^{o}}'_{I_{p}} \\
  {K^{o}}'_{I_{1}}& {K^{o}}''_{I_{1}, I_{1}} & \cdots & {K^{o}}''_{I_{1}, I_{p}} \\
  \vdots  & \vdots  & \ddots & \vdots  \\
  {K^{o}}'_{I_{p}} & {K^{o}}''_{I_{p}, I_{1}} & \cdots & {K^{o}}''_{I_{p}, I_{p}}
\end{bmatrix}$$
(in which ${K^{o}}'_{I_{i}}=\dfrac{\partial K^{o}}{\partial I_{i}}, {K^{o}}''_{I_{i}, I_{j}}=\dfrac{\partial^{2} K^{o}}{\partial I_{i} \partial I_{j}}$), i.e.
$$|\mathscr{H}^{B}(K^{o})(I)| \neq 0.$$
When this is satisfied, Corollary \ref{cor: isoe kam} asserts the persistence under sufficiently small perturbation of a set of Lagrangian invariant tori {(with fixed energy $c_{0}$)} of $N^o=K^{o}(I)$ parametrized by a positive $(p-1)$-Lebesgue measure set in the action space. These invariant tori form a set of positive measure in the energy surface of the perturbed system with energy $c_{0}$. 

Moreover, if the system $K^{o}(I)$ is properly degenerate, say for $I=(I^{(1)}, I^{(2)},\cdots, I^{(N)})$, $0< d_{1} <  d_{2}, \cdots, < d_{N} $, we have 
 $$K^{o}(I)=K^{o}_{1}(I^{(1)})+\epsilon^{d_{1}} K^{o}_{2}(I^{(1)}, I^{(2)}) + \cdots +\epsilon^{d_{N}} K^{o}_{N}(I),$$

{then, by replacing entries of the matrix $\mathscr{H}^{B}(K^{o})(I)$ by their orders in $\epsilon$, we obtain
$$\begin{bmatrix}
 0 & 1 & \epsilon^{d_{1}} & \cdots & \epsilon^{d_{N}} \\
  1& 1 & \epsilon^{d_{1}} & \cdots & \epsilon^{d_{N}} \\
  \epsilon^{d_{1}} & \epsilon^{d_{1}} & \epsilon^{d_{1}} & \cdots & \epsilon^{d_{N}} \\
  \vdots & \vdots & \vdots  & \ddots & \vdots  \\
  \epsilon^{d_{N}} & \epsilon^{d_{N}} & \epsilon^{d_{N}} & \cdots & \epsilon^{d_{N}}
\end{bmatrix}$$}
which in particular implies that
 $$|\mathscr{H}^{B}(K^{o})(I)| \neq 0, \, \, \forall \,0<\epsilon<<1 {\Leftarrow} |\mathscr{H}^{B}(K^{o}_{1})(I^{(1)})| \neq 0, |\mathscr{H}(K^{o}_{i})(I^{(i)})| \neq 0, \forall (i)=(2),\cdots, (N).$$
 
The smallest frequency of $K^{o}(I)$ is of order $\epsilon^{d_{N}}$. 
If $K^{o}(I)$ is iso-energetically non-degenerate, then for any $0<\epsilon<<1$, there exists a set of positive $(p-1)$-Lebesgue measure of the action space, such that
the set of projective classes of their frequencies contain a set of positive measure of projective classes of homogeneous Diophantine vectors in 
$$HD_{\epsilon^{d_{N}} \bar{\gamma},\bar{\tau}}:=\{\varpi \in \R^{p}: \forall k\in \Z^p \setminus \{0\}, \quad |k \cdot \varpi | \geq \epsilon^{d_{N}} \bar\gamma\, |k|_{2}^{-\bar\tau}\}$$
whose measure is uniformly bounded from below for $0<\epsilon<<1$: Actually, since for any vector $\nu' \in \R^{p}$,
$$\epsilon^{d_{N}} \nu' \in HD_{\epsilon^{d_{N}} \bar{\gamma},\bar{\tau}} \Leftrightarrow \nu' \in HD_{\bar{\gamma},\bar{\tau}}, $$
for $\epsilon$ sufficiently small, the measure of projective classes of Diophantine frequencies of $K^{o}(I)$ in $HD_{\epsilon^{d_{N}} \bar{\gamma},\bar{\tau}}$ is at least the measure of projective classes of Diophantine frequencies of 
$$K^{o}_{1}(I^{(1)})+K^{o}_{2}(I^{(1)}, I^{(2)}) + \cdots +K^{o}_{N}(I)$$
in $HD_{\bar{\gamma},\bar{\tau}}$, which is independent of $\epsilon$.

Following Theorem \ref{Theo: KAM}, we may thus set $\varepsilon=\hbox{Cst}\, (\epsilon^{d_{N}}\, \bar{\gamma})^{k}$ for the size of allowed perturbations, for some positive constant $\hbox{Cst}$ and some $k \ge 1$, provided $\bar{\gamma}$ is small. 
\end{condition}

\subsection{Application of the iso-energetic KAM theorem} \label{Subsection: Application of KAM theorem}
After symplectic reduction by the $SO(3)$-symmetry of rotations and the $S^{1}$-action of $K. S.$, let us first show the existence of torsion near $G_{1}=G_{1, min}$, for $G_{1, min}=|C-G_{2}|$ small enough in the system 
$\mathcal{F}_{Kep}+\overline{\mathcal{F}_{sec}^{n, n'}}.$

In view of Condition \ref{isoenergetically non-degenerate}, for small $\alpha$, we verify the iso-energetic non-degeneracy condition by verifying the corresponding non-degeneracy conditions separately for the Keplerian part (with respect to $\mathcal{P}_{0}$ and $L_{2}$) and for the system $\overline{\mathcal{F}_{sec}^{n, n'}}$ reduced further by the Keplerian $\T^{2}$-symmetry. 

\subsubsection{Keplerian part}
The bordered Hessian of 
$$\mathcal{F}_{Kep}=P_{0} \sqrt{\dfrac{2 f_{1} (L_{2})}{\mu_{1}}}-\mu_{1} M_{1}$$
with respect to $\mathcal{P}_{0}$ and $L_{2}$ is non-degenerate.

\subsubsection{Secular non-degeneracy}
Keeping unreduced only the $SO(2)$-symmetry conjugate to $G_{2}$, the periodic orbits in  the corresponding completely reduced 1 degree of freedom system are lifted to invariant 2-tori of $\overline{\mathcal{F}_{sec}^{n, n'}}$ whose frequencies differ from that of the invariant 2-tori of $\alpha^{3} F_{quad}$ only by quantities of order $O(\alpha^{4})$. For small enough $\alpha$, the existence of torsion of these invariant 2-tori of $\overline{\mathcal{F}_{sec}^{n, n'}}$ for any $n, n'$ thus follows from the existence of torsion of invariant 2-tori of $\mathcal{F}_{quad}$. For $|C-G_{2}|$ small enough, we shall verify in Appendix \ref{Appendix: Torsion} the existence of torsion of almost coplanar invariant 2-tori of $\mathcal{F}_{quad}$ close enough to $\{G_{1}=G_{1, min}\}$ (which in particular does not vanish when $C \to G_{2}$).

\subsubsection{Application of the iso-energetic KAM theorem}
We fix $n, n'$ large enough, so that $\mathcal{F}_{pert}^{n,n'}$ is of order $O(\alpha^{4 k+1})$ (the order is chosen so as to fit  Condition \ref{isoenergetically non-degenerate} for sufficiently small $\alpha$). 

The invariant tori of $\mathcal{F}_{Kep}+\overline{\mathcal{F}_{sec}^{n, n'}}$ near $\{G_{1}=G_{1, min}\}$ are smoothly parametrized by $(\mathcal{P}_{0}, L_{2}, \mathcal{J}_{1}, G_{2})$ where $\mathcal{J}_{1}$ designates the region (containing the point $\{G_{1}=G_{1, min}\}$) enclosed by the corresponding periodic orbit of the invariant torus after further reducing by the Keplerian $\T^{2}$-symmetry and the $SO(2)$-symmetry conjugate to $G_{2}$.  For small enough $\alpha$, the above non-degeneracies ensure the existence of a neighborhood $\Omega$ of $\{G_{1}=G_{1, min}\}$ for small enough $G_{1, min}=|C-G_{2}|$, in which the mapping 
$$(\mathcal{P}_{0}, L_{2}, \mathcal{J}_{1}, G_{2}) \mapsto \left(\mathcal{F}_{Kep}+\overline{\mathcal{F}_{sec}^{n, n'}}, [\nu_{n, n'}]\right) $$
is a local diffeomorphism (with energy containing a neighborhood of $0$), where we have denoted by $\nu_{n, n'}$ the frequencies of the invariant 4-tori of $\mathcal{F}_{Kep}+\overline{\mathcal{F}_{sec}^{n, n'}}$. Therefore, there exist $\bar{\gamma} >0, \bar{\tau} \ge 3$, and a set $\Omega'$ of positive measure (whose measure is bounded from below uniformly for small $\alpha$), consisting of $(\alpha^{3} \bar{\gamma}, \bar{\tau})$-Diophantine invariant Lagrangian tori of $\mathcal{F}_{Kep}+\overline{\mathcal{F}_{sec}^{n, n'}}$. For any such torus with parameter $(\mathcal{P}^{o}_{0}, L^{o}_{2}, \mathcal{J}^{o}_{1}, G^{o}_{2})$, there exists $\lambda>0$, such that for $(\overline{\mathcal{P}_{0}}, \overline{L_{2}}, \overline{\mathcal{J}_{1}}, \overline{G_{2}}) \in B_{1}^{4}$, the mapping
$$\Phi_{\lambda}(\overline{\mathcal{P}_{0}}, \overline{L_{2}}, \overline{\mathcal{J}_{1}}, \overline{G_{2}}):=(\mathcal{P}^{o}_{0}+\lambda\, \overline{\mathcal{P}_{0}}, L^{o}_{2}+\lambda\, \overline{L_{2}}, \mathcal{J}^{o}_{1}+\lambda\, \overline{\mathcal{J}_{1}}, G^{o}_{2}+\lambda\, \overline{G_{2}}) \mapsto \left(\mathcal{F}_{Kep}+\overline{\mathcal{F}_{sec}^{n, n'}}, [\nu_{n, n'}]\right) $$
is a diffeomorphism. We suppose in addition that the invariant torus $(\mathcal{P}^{o}_{0}, L^{o}_{2}, \mathcal{J}^{o}_{1}, G^{o}_{2})$ of $\mathcal{F}_{Kep}+\overline{\mathcal{F}_{sec}^{n, n'}}$ has zero energy.

We may now apply Corollary \ref{cor: isoe kam} with 
$$N^{o}=\Phi_{\lambda}^{*}(\mathcal{F}_{Kep}+ \overline{\mathcal{F}_{sec}^{n,n'}}), \, N=\Phi_{\lambda}^{*}(\mathcal{F}_{Kep}+ \overline{\mathcal{F}_{sec}^{n,n'}}+\mathcal{F}_{pert}^{n,n'}), \iota=(\mathcal{P}^{o}_{0}, L^{o}_{2}, \mathcal{J}^{o}_{1}, G^{o}_{2})$$
and enough small $\alpha$. 

In this way, we obtain a set of invariant 4-tori of $\mathcal{F}$ reduced by the $SO(3)$-symmetry, which has positive measure on the energy level $\mathcal{F}=0$. By rotating around $\vec{C}$, these 4-tori give rise to a set of invariant 5-tori of $\mathcal{F}$ (being reduced by the $S^{1}$-fibre symmetry of $K.S.$) with fixed (vertical) direction of $\vec{C}$ in $\Pi_{reg}:=V^{0} \times T^{*}(\R^{3} \setminus \{0\})$. Finally, by rotating $\vec{C}$, we obtain a set of invariant 5- tori of $\mathcal{F}$ in $V^{0} \times T^{*}(\R^{3} \setminus \{0\})$ having positive measure on the energy level $\mathcal{F}=0$. Depending on the commensurability of the frequencies, the flows on these invariant 5-dimensional tori may either be ergodic or be non-ergodic but only ergodic on some invariant 4-dimensional subtori.

\section{Transversality}\label{Section: Transversality}

\subsection{Transversality of the Ergodic tori with the collision set}
The $S^{1}$-fibre action of $K. S.$ is free on the codimension-3 submanifold $\{(0, w, Q_{2}, P_{2}) \in T^{*} \mathbb{H} \setminus \{(0, 0)\} \times T^{*} (\R^{3} \setminus \{0\})\}$ of $\Sigma^{0}$ corresponding to inner double collisions of $F$. The quotient $\mathcal{C}ol$ is thus a codimension-3 submanifold of $\Pi_{reg}:=V^{0} \times T^{*}(\R^{3} \setminus \{0\})$. 

We aim to show that after being reduced by the $S^{1}$-fibre symmetry of $K.S.$, the invariant ergodic tori of $\mathcal{F}$ intersecting $\mathcal{C}ol$ transversely form a set of positive measure in the energy level $\mathcal{F}=0$ in  $\Pi_{reg}$. 

In Subsection \ref{Subsection: Elimination of g_{2}}, we have shown the existence of a symplectic transformation $$\phi^{n} \circ \psi^{n'}: \hat{\mathcal{P}}_{\varepsilon_{0}}^{n} \to \phi^{n} \circ \psi^{n'}(\hat{\mathcal{P}}_{\varepsilon_{0}}^{n}),$$ dominated by $\psi^{3}$ for small $\alpha$, such that 
$$\psi^{n' *} \phi^{n *} \mathcal{F}=\mathcal{F}_{Kep}+\overline{\mathcal{F}_{sec}^{n, n'}}+\mathcal{F}_{pert}^{n,n'}.$$

When $\alpha$ is sufficiently small, let us first show that those invariant 5-tori of $\mathcal{F}_{Kep}+\overline{\mathcal{F}_{sec}^{n, n'}}$ intersecting $\mathcal{C}ol'=(\phi^{3})^{-1} (\mathcal{C}ol)$ transversely form an open set in the energy level $\mathcal{F}_{Kep}+\overline{\mathcal{F}_{sec}^{n, n'}}=0$ in $\Pi_{reg}$.

Denote by $\Pi_{reg}^{\tilde{C}}$ the 11-dimensional submanifold of $\Pi_{reg}$ with $C=\tilde{C}>0$ and by $\underline{\mathcal{C}ol}$ the (transverse) intersection of $\mathcal{C}ol$ and $\Pi_{reg}^{\tilde{C}}$. The intersection of ${\{C=G_{2}\}}$ with $\Pi_{reg}^{\tilde{C}}$ is denoted by $\underline{\{C=G_{2}\}}$. 

{As we are interested in invariant tori in $\mathcal{F}=0$, we could hence fix $\mathcal{F}_{Kep}=0$ (which implies $\mathcal{F}=O(\alpha^{3})$) and then adjust the energy properly}. In the sequel, unless otherwise stated, an invariant torus is always understood as an invariant 5-torus (on which $\vec{C}$ is conserved) of $\mathcal{F}_{Kep}+\overline{\mathcal{F}_{sec}^{n,n'}}$ with $\mathcal{F}_{Kep}=0$. In addition, we suppose that $\vec{C}$ is sufficiently inclined and $\alpha$ is small enough, so that the Delaunay coordinates are well-defined for the outer body. We take any convenient coordinates on $V^{0}$ for the inner body.

\begin{lem} For small enough $\alpha$, any invariant torus in $\underline{\{C=G_{2}\}}$ intersects $\underline{\mathcal{C}ol}$ transversely in $\underline{\{C=G_{2}\}}$.
\end{lem}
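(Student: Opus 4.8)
The plan is to reduce transversality in the partially reduced space $\underline{\{C=G_2\}}$ to the transversality, already established in Subsection~\ref{Subsection: Quadrupolar system and its dynamics}, of the reduced periodic orbits with the line $\{G_1=0\}$ on the cylinder $\mathcal{D}$. The first observation is that collision forces the inner ellipse to degenerate: a collision orbit is a radial Kepler segment through the focus, whose angular momentum vanishes, so that $G_1=0$ and $\vec{C}=\vec{G_2}$ hold along $\mathcal{C}ol=\{z=0\}$. Hence $\mathcal{C}ol\subset\{C=G_2\}\cap\{G_1=0\}$, and in particular $\underline{\mathcal{C}ol}\subset\underline{\{C=G_2\}}$. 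A dimension count gives that $\underline{\mathcal{C}ol}$ has codimension $2$ in $\underline{\{C=G_2\}}$; concretely, inside $\underline{\{C=G_2\}}$ it is cut out by the two conditions that the inner ellipse be degenerate ($G_1=0$) and that the inner body sit at the collision point, the latter being a condition on the regularized inner phase $\vartheta_0$ alone. Transversality of a $5$-torus with a codimension-$2$ set amounts to showing that the differentials of these two conditions are linearly independent when restricted to the tangent space of the torus.

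First I would describe the invariant $5$-torus near $\{G_1=0\}$ as a skew product over the reduced periodic orbit in $\mathcal{D}$: its five angles are the reduced phase $\psi$ along that periodic orbit (conjugate to the action $\mathcal{J}_1$), together with the four ``symmetry'' angles $\vartheta_0$ (inner fast), $l_2'$ (outer fast), $g_2$, and the longitude $h$ of rotation about $\vec{C}$. On such a torus the shape invariant $G_1$ is a function of $\psi$ alone, inherited from the position along the reduced periodic orbit.

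Next I would verify that the three symmetry directions $\partial_{l_2'}, \partial_{g_2}, \partial_{h}$ are tangent to $\underline{\mathcal{C}ol}$: the collision condition $z=0$ constrains only the inner configuration and is $SO(3)$-invariant, so shifting the outer mean anomaly $l_2'$ or the outer argument of pericentre $g_2$, or rotating rigidly about $\vec{C}$, maps $\mathcal{C}ol$ to itself. Consequently, at a collision point $p$ of the torus, transversality reduces to showing that the images of $\partial_\psi$ and $\partial_{\vartheta_0}$ span the $2$-dimensional normal space of $\underline{\mathcal{C}ol}$ in $\underline{\{C=G_2\}}$. The direction $\partial_\psi$ supplies the $G_1$-normal: since the reduced periodic orbit meets $\{G_1=0\}$ transversely with an angle bounded away from $0$ independently of $\alpha$ (Subsection~\ref{Subsection: Quadrupolar system and its dynamics}), one has $dG_1(\partial_\psi)\neq 0$. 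The direction $\partial_{\vartheta_0}$ supplies the remaining, collision-phase normal, which is independent of the $G_1$-direction because one governs the position of the inner body along its orbit while the other governs the shape of that orbit.

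The main obstacle will be to make this last step rigorous at the regularization singularity: at $G_1=0$ the Delaunay angles $g_1,l_1$ degenerate, so the identification of the collision-phase condition and the proof that its differential along the torus is independent of $dG_1$ must be carried out in the regular Kustaanheimo--Stiefel coordinates $(z,w)$, equivalently on the smooth branched double cover $\mathcal{D}$, where both the flow and the set $\{z=0\}$ are analytic. Once the two normal directions are shown to be independent there, transversality of the torus with $\underline{\mathcal{C}ol}$ inside $\underline{\{C=G_2\}}$ follows for all sufficiently small $\alpha$, the smallness guaranteeing that the $O(\alpha^4)$ difference between $\overline{\mathcal{F}_{sec}^{n,n'}}$ and $\alpha^3\mathcal{F}_{quad}$ does not spoil the uniform transversal crossing.
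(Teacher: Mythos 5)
Your proposal is correct and takes essentially the same approach as the paper: both arguments cut out $\underline{\mathcal{C}ol}$ inside $\underline{\{C=G_{2}\}}$ by the two conditions ``degenerate inner ellipse ($G_{1}=0$)'' and ``inner collision phase ($u_{1}=0$)'', obtain the first transversal direction from the reduced quadrupolar periodic orbit crossing $\{G_{1}=0\}$ transversely (on the double cover $\mathcal{D}$), the second from the inner fast-angle circles foliating the $5$-torus, and conclude by openness of transversality under the $O(\alpha)$ deformation relating the torus to a torus of $\mathcal{F}_{Kep}+\alpha^{3}\mathcal{F}_{quad}$. Your version merely makes explicit the linear algebra (the symmetry directions $\partial_{l_{2}'},\partial_{g_{2}},\partial_{h}$ being tangent to $\underline{\mathcal{C}ol}$, and the $2\times 2$ determinant in the $(\psi,\vartheta_{0})$ directions) that the paper leaves implicit.
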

\begin{proof} Any such invariant torus is a $O(\alpha)$-deformation of an invariant torus of $\mathcal{F}_{Kep}+\alpha^{3} \,\mathcal{F}_{quad}$ in $\underline{\{C=G_{2}\}}$. After being reduced by the $\T^{2} \times SO(3) \times SO(2)$-symmetry, such an invariant torus of $\mathcal{F}_{Kep}+\alpha^{3} \,\mathcal{F}_{quad}$ descends to a closed orbit intersecting the line segment $\{G_{1}=0\}$ transversely (c.f. Figure \ref{ReducedCriticalQuadrupolarSpace}), therefore it intersects transversely the codimension-1 submanifold of $\underline{\{C=G_{2}\}}$ consisting of degenerate inner ellipses in $\underline{\{C=G_{2}\}}$; moreover, being foliated by the $S^{1}$-orbits of the inner particle of $\mathcal{F}_{Kep}$ ({parametrized by $u_{1}$}), this torus also intersects the codimension-2 submanifold $\underline{\mathcal{C}ol}$ (in which $u_{1}=0$) of $\underline{\{C=G_{2}\}}$ transversely in $\underline{\{C=G_{2}\}}$. The conclusion thus follows for small $\alpha$.
\end{proof}

At any intersection point $\tilde{p}_{0}$ of an invariant torus $\bar{A}$ with $\underline{\mathcal{C}ol}$, we have the direct sum decomposition 
$$T_{\tilde{p}_{0}} \Pi_{reg}^{\tilde{C}}=E^{9} \oplus E_{G_{2}, g_{2}},$$
in which $E^{9}$ is the 9-dimensional subspace tangent to $\{C=G_{2}=\tilde{C}, g_{2}=g_{2}(\tilde{p}_{0})\}$, and $E_{G_{2}, g_{2}}$ is the 2-dimensional subspace generated by $\dfrac{\partial}{\partial G_{2}}(\tilde{p}_{0})$ and $\dfrac{\partial}{\partial g_{2}}(\tilde{p}_{0})$. We observe that
\begin{itemize}
\item $E^{9} \subset T_{\tilde{p}_{0}} \underline{\mathcal{C}ol} + T_{\tilde{p}_{0}} \bar{A}=T_{\tilde{p}_{0}} \underline{\{C=G_{2}\}}$, and
\item $\dfrac{\partial}{\partial g_{2}}(\tilde{p}_{0}) \in T_{\tilde{p}_{0}} \bar{A} \cap T_{\tilde{p}_{0}} \underline{\mathcal{C}ol}$;
\end{itemize}
the first assertion comes from the transversality of $\bar{A}$ with $\underline{\mathcal{C}ol}$ in $\underline{\{C=G_{2}\}}$, while the second assertion holds since $G_{2}$ is a first integral of $\mathcal{F}_{Kep}+\overline{\mathcal{F}_{sec}^{n,n'}}$, {and $\bar{A}$ is obtained from an invariant torus in the reduced system by the symmetry of shifting $g_{2}$}.

The transformation $\psi^{3}$ is the time 1-map of a function $\hat{\mathcal{H}}$ satisfying the cohomological equation:
$$\nu_{quad, 2} \dfrac{\partial \hat{\mathcal{H}}}{\partial g_{2}}= \alpha \left(\mathcal{F}_{sec}^{1,3} -\dfrac{1}{2 \pi} \int_{0}^{2 \pi} \mathcal{F}_{sec}^{1,3} d \bar{g}_{2}\right);$$
recall that $\nu_{quad, 2}$ denotes the frequency of $g_{2}$ in the system $\mathcal{F}_{quad}$.

\begin{lem} \label{lem: torsion transformation}
There exists a small real number $\tilde{\varepsilon}>0$ independent of $\alpha$, and a non empty open subset $\underline{\mathcal{C}ol_{0}}$ of $\underline{\mathcal{C}ol}$  whose relative measure tends to $1$ locally in $\underline{\mathcal{C}ol}$ when $\tilde{\varepsilon} \to 0$, such that $\left|\Bigl.\dfrac{\partial^{2} \hat{\mathcal{H}}}{\partial g^{2}_{2}}\Bigr|_{\underline{\mathcal{C}ol_{0}}}\right|>2 \alpha \cdot \tilde{\varepsilon}$. 
\end{lem}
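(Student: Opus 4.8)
The plan is to solve the cohomological equation explicitly, differentiate once more in $g_2$, and reduce the whole statement to a non-vanishing property of a single $\alpha$-independent analytic function on $\underline{\mathcal{C}ol}$. Since $\nu_{quad, 2}=\partial_{G_2}\mathcal{F}_{quad}$ and the $g_2$-average $\frac{1}{2\pi}\int_0^{2\pi}\mathcal{F}_{sec}^{1,3}\,d\bar{g}_{2}$ are both independent of $g_2$ (the quadrupolar system being invariant under the $SO(2)$-symmetry conjugate to $G_2$), the cohomological equation integrates to
$$\frac{\partial \hat{\mathcal{H}}}{\partial g_2}=\frac{\alpha}{\nu_{quad, 2}}\Bigl(\mathcal{F}_{sec}^{1,3}-\frac{1}{2\pi}\int_0^{2\pi}\mathcal{F}_{sec}^{1,3}\,d\bar{g}_{2}\Bigr),\qquad \frac{\partial^2 \hat{\mathcal{H}}}{\partial g_2^2}=\frac{\alpha}{\nu_{quad, 2}}\,\frac{\partial \mathcal{F}_{sec}^{1,3}}{\partial g_2}.$$
Setting $h:=\dfrac{1}{\nu_{quad, 2}}\dfrac{\partial \mathcal{F}_{sec}^{1,3}}{\partial g_2}$, which is real analytic and independent of $\alpha$, the desired bound $\bigl|\partial^2_{g_2}\hat{\mathcal{H}}\bigr|>2\alpha\tilde{\varepsilon}$ is literally $|h|>2\tilde{\varepsilon}$; this is why $\tilde{\varepsilon}$ can be taken independent of $\alpha$.

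It then remains to analyze $h$ on $\underline{\mathcal{C}ol}$. On the collision set the inner ellipse degenerates, so $G_1=0$; since $\vec{C}=\vec{G_1}+\vec{G_2}$, this forces $C=G_2$, so that $\underline{\mathcal{C}ol}\subset\underline{\{C=G_2\}}$. I would work on the double cover $\mathcal{D}$, where $(G_1, g_1)$ stay genuine Darboux coordinates through $G_1=0$, and restrict to $\check{\mathcal{P}}^{n}_{\varepsilon_{0}}$ so that the denominator remains bounded away from zero, using the explicit value
$$\nu_{quad, 2}\big|_{C=G_2,\,G_1=0}=-\frac{15\,\mu_{quad}\,L_2^3}{8\,G_2^4}\,(3-4\cos^2 g_1).$$
The crucial point is that the numerator $\partial_{g_2}\mathcal{F}_{sec}^{1,3}\big|_{G_1=0}$ is not identically zero. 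For this I would use the explicit octupolar term of \cite{LaskarBoue}, together with $\mathcal{F}_{sec}^{1,3}=K.S.^{*}(a_1 F_{sec}^{1,3})$ from Lemma \ref{lem: quadrupolar regularized system}: its $g_2$-dependent part carries a factor proportional to $e_1 e_2$ times trigonometric functions of $g_1$ and $g_2$, so at a degenerate inner ellipse ($e_1=1$) with $e_2>e_2^{\vee}>0$ the dependence on $g_2$ genuinely survives. Hence $h\big|_{G_1=0}$ is a non-trivial real analytic function of $(g_1, g_2)$ on the relevant local chart.

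With this non-triviality established, the conclusion is soft. A real analytic function not identically zero has a zero set of measure zero, so on a fixed compact local region of $\underline{\mathcal{C}ol}$ the nested sets $\{|h|\le 2\tilde{\varepsilon}\}$ decrease to $\{h=0\}$ and hence have measure tending to $0$ as $\tilde{\varepsilon}\to 0$. Taking $\underline{\mathcal{C}ol_0}:=\underline{\mathcal{C}ol}\cap\{|h|>2\tilde{\varepsilon}\}$ yields a non-empty open subset whose relative measure tends to $1$, and on which $\bigl|\partial^2_{g_2}\hat{\mathcal{H}}\bigr|=\alpha|h|>2\alpha\tilde{\varepsilon}$. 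I expect the only genuine difficulty to be the non-vanishing of $\partial_{g_2}\mathcal{F}_{sec}^{1,3}$ at $G_1=0$: one must verify that the octupolar $g_2$-coupling does not happen to degenerate exactly on the collision locus, which is why an explicit expression for the octupolar term, carried carefully through the coordinate singularity at $G_1=0$ on $\mathcal{D}$, appears unavoidable.
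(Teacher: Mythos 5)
Your proposal is correct and follows essentially the same route as the paper: both solve the cohomological equation, note that $\frac{1}{\alpha}\,\frac{\partial^{2}\hat{\mathcal{H}}}{\partial g_{2}^{2}}$ is an $\alpha$-independent analytic function on $\underline{\mathcal{C}ol}$ (whence $\tilde{\varepsilon}$ independent of $\alpha$), reduce the lemma to the non-trivial $g_{2}$-dependence of $\mathcal{F}_{sec}^{1,3}$ restricted to the collision set, and conclude by the measure-zero property of zero sets of non-trivial analytic functions. The one point you flag as the genuine difficulty is settled in the paper exactly as you anticipate, by quoting the explicit octupolar term of Laskar--Bou\'e in the coplanar case, $\mathcal{F}_{sec}^{1,3}=-\frac{15}{64}\,\frac{(4e_{1}+3e_{1}^{3})e_{2}}{(1-e_{2}^{2})^{5/2}}\cos(g_{1}-g_{2})$, which at $e_{1}=1$ and $e_{2}>0$ depends non-trivially on $g_{2}$.
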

\begin{proof}It suffices to show that the function $\mathcal{F}_{sec}^{1,3}\Bigr|_{\underline{\mathcal{C}ol}}$ (and thus $\Bigl.\dfrac{\partial \hat{\mathcal{H}}}{\partial g_{2}}\Bigr|_{\underline{\mathcal{C}ol}}$) depends non-trivially on $g_{2}$. Indeed, this implies that the analytic function $\dfrac{1}{\alpha}\dfrac{\partial^{2} \hat{\mathcal{H}}}{\partial g^{2}_{2}}$ is not identically zero on $\underline{\mathcal{C}ol}$, therefore there exists $\tilde{\varepsilon}>0$ which bounds the absolute value of this function from below on an open set whose relative measure tends to $1$ locally in $\underline{\mathcal{C}ol}$ when $\tilde{\varepsilon} \to 0$.

To deduce that $\mathcal{F}_{sec}^{1,3}\Bigr|_{\underline{\mathcal{C}ol}}$ depends non-trivially on $g_{2}$, it is sufficient to observe from \cite{LaskarBoue} that when the two Keplerian ellipses are coplanar, 
$$\mathcal{F}_{sec}^{1,3}=-\dfrac{15}{64} \, \dfrac{(4 e_{1} +3 e_{1}^{3})e_{2}}{(1-e_{2}^{2})^{\frac{5}{2}}}\,\cos(g_{1}-g_{2}),$$
which depends non-trivially on $g_{2}$ when further restricted to $e_{1}=1$.
\end{proof}

We now determine the transformation $\phi^{3}$ more precisely: we require this transformation to preserve $C$. For this, we require $\hat{\mathcal{H}}$ to be invariant under rotations. Notice that the function $\nu_{quad, 2}$ is invariant under rotations. From \cite{LaskarBoue}, we see that on a dense open subset of $\Pi_{reg}$ where the angle $g_{2}$ is well-defined, the function $\mathcal{F}_{sec}^{1,3}(g_{2})$ is a linear combination of $\cos{g}_{2}$ and $\sin{g}_{2}$, with coefficients independent of $g_{2}$. {We may thus choose}
$$\hat{\mathcal{H}}=-\dfrac{\alpha}{\nu_{quad, 2}} \mathcal{F}_{sec}^{1,3}\left(g_{2}+\dfrac{\pi}{2}\right).$$ 

Let $\underline{\mathcal{C}ol'_{0}}=(\phi^{3})^{-1}(\underline{\mathcal{C}ol_{0}})$. This is an open subset of $\underline{\mathcal{C}ol'}=(\phi^{3})^{-1}(\underline{\mathcal{C}ol}) \subset \Pi_{reg}^{\tilde{C}}$ whose relative measure tends to $1$ locally in $\underline{\mathcal{C}ol}'$ when $\tilde{\varepsilon} \to 0$.

\begin{lem} For small enough $\alpha$, any invariant torus of $\mathcal{F}_{Kep}+\overline{\mathcal{F}_{sec}^{n,n'}}$ intersecting $\underline{\mathcal{C}ol'_{0}}$ is transverse to $\mathcal{C}ol'$ in $\Pi_{reg}$.  
\end{lem}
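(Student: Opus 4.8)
The plan is to transport the problem to the genuine collision set $\mathcal{C}ol$ by the diffeomorphism $\phi^{3}$, and then to recover one at a time the two directions --- along $C$ and along $G_{2}$ --- that are not yet controlled by the transversality already available inside the critical slice $\underline{\{C=G_{2}\}}$. First I would note that, since $\phi^{3}$ is a diffeomorphism, $\bar{A}$ is transverse to $\mathcal{C}ol'=(\phi^{3})^{-1}(\mathcal{C}ol)$ at a point $p\in\underline{\mathcal{C}ol'_{0}}$ exactly when $A:=\phi^{3}(\bar{A})$ is transverse to $\mathcal{C}ol$ at $q:=\phi^{3}(p)\in\underline{\mathcal{C}ol_{0}}$, which is precisely where Lemma~\ref{lem: torsion transformation} and the observations recorded above apply. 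Because $\phi^{3}$ preserves $C$, one has $T_{q}A\subset T_{q}\Pi_{reg}^{\tilde{C}}$, while $\mathcal{C}ol$ meets $\Pi_{reg}^{\tilde{C}}$ transversely, so $T_{q}\mathcal{C}ol$ contains a vector with $dC\neq 0$. Hence it will suffice to prove the slice statement $T_{q}A+T_{q}\underline{\mathcal{C}ol}=T_{q}\Pi_{reg}^{\tilde{C}}$, the $C$-direction being then automatic.

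Next I would produce the $G_{2}$-direction, which is the whole reason for having introduced $\phi^{3}$. Since $\mathcal{C}ol\subset\{C=G_{2}\}$ (a double collision forces $G_{1}=0$, whence $C=G_{2}$ by the inequality $G_{1}\ge|C-G_{2}|$), the function $G_{2}$ is constant on $\underline{\mathcal{C}ol}$, so the direction $\partial_{G_{2}}$ can only come from $T_{q}A$. As $\overline{\mathcal{F}_{sec}^{n,n'}}$ is independent of $g_{2}$ (so $G_{2}$ is conserved and $g_{2}$ is cyclic), $\partial_{g_{2}}\in T_{p}\bar{A}$, and $w:=d\phi^{3}_{p}(\partial_{g_{2}})\in T_{q}A$ satisfies $dG_{2}(w)=\partial_{g_{2}}(G_{2}\circ\phi^{3})(p)=-\partial^{2}_{g_{2}}\hat{\mathcal{H}}+O(\alpha^{2})$, because $\phi^{3}$ is the time-$1$ map of $X_{\hat{\mathcal{H}}}$. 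By Lemma~\ref{lem: torsion transformation} this quantity is bounded below in modulus by $\alpha\tilde{\varepsilon}$ at $q\in\underline{\mathcal{C}ol_{0}}$, so for small $\alpha$ the torus $A$ genuinely leaves the hypersurface $\{C=G_{2}\}$ in the $G_{2}$-direction.

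Finally I would verify that tilting $\partial_{g_{2}}$ into $w$ costs nothing for the transversality already present in the slice. Let $\bar{A}_{\mathrm{crit}}$ be the critical torus obtained from $\bar{A}$ by moving its conserved value of $G_{2}$ to exactly $\tilde{C}$; intersecting $\mathcal{C}ol'$ forces $\bar{A}$ to be near-critical, $G_{2}=\tilde{C}+O(\alpha)$, so $A=\phi^{3}(\bar{A})$ is $O(\alpha)$-close to $\bar{A}_{\mathrm{crit}}$ in the $C^{1}$ sense near $q$. For $\bar{A}_{\mathrm{crit}}$ the decomposition and the two observations above give $T_{q}\bar{A}_{\mathrm{crit}}+T_{q}\underline{\mathcal{C}ol}=T_{q}\underline{\{C=G_{2}\}}$ together with $\partial_{g_{2}}\in T_{q}\bar{A}_{\mathrm{crit}}\cap T_{q}\underline{\mathcal{C}ol}$. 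Since $\partial_{g_{2}}$ is already contained in $T_{q}\underline{\mathcal{C}ol}$, discarding it from the torus's slice-tangent does not shrink the sum, so the $4$-dimensional space $T_{q}A\cap\{dG_{2}=0\}$ (which up to $O(\alpha)$ is $T_{q}\bar{A}_{\mathrm{crit}}$ with the $\partial_{g_{2}}$-direction tilted away into $w$) still surjects onto the $2$-dimensional quotient $T_{q}\underline{\{C=G_{2}\}}/T_{q}\underline{\mathcal{C}ol}$. Adjoining $w$, whose $dG_{2}$-component is nonzero, then yields $T_{q}A+T_{q}\underline{\mathcal{C}ol}=T_{q}\Pi_{reg}^{\tilde{C}}$, and hence the desired transversality in $\Pi_{reg}$.

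I expect the main obstacle to be the bookkeeping in this last step: the useful tilt $dG_{2}(w)$ and the parasitic $G_{2}$-components of the remaining torus directions are all of order $\alpha$, so one must ensure the former dominates after normalization. This is exactly what the quantitative lower bound $|\partial^{2}_{g_{2}}\hat{\mathcal{H}}|>2\alpha\tilde{\varepsilon}$ of Lemma~\ref{lem: torsion transformation} guarantees: it keeps the relevant ratios bounded uniformly in $\alpha$, so that the open slice-transversality of $\bar{A}_{\mathrm{crit}}$ persists under the $O(\alpha)$-perturbation $A$ and the cancellation producing the clean $\partial_{G_{2}}$-direction survives.
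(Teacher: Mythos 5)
Your proposal is correct and takes essentially the same route as the paper's own proof: transversality is transported through $\phi^{3}$, the in-slice transversality of the preceding lemma is combined with the $G_{2}$-direction produced by the tilt $\partial^{2}_{g_{2}}\hat{\mathcal{H}}$ quantified in Lemma \ref{lem: torsion transformation}, and the extra $C$-direction is supplied by the transversality of $\mathcal{C}ol$ to $\Pi_{reg}^{\tilde{C}}$. The only differences are organizational: you push the torus forward by $\phi^{3}$ and tilt $\partial_{g_{2}}\in T\bar{A}$ into $w\in T_{q}A$, whereas the paper keeps the torus fixed and pulls $\partial_{g_{2}}\in T\mathcal{C}ol$ back by $(\phi^{3})^{-1}$ into $T\mathcal{C}ol'$, and you close the linear algebra with a quotient/surjectivity argument where the paper computes an $11\times 11$ determinant equal to $\tilde{\alpha}+O(\alpha^{2})$.
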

\begin{proof} Any $\tilde{p} \in \underline{\mathcal{C}ol'_{0}}$ can be written as $\tilde{p}=(\phi^{3})^{-1} (\tilde{p}_{0})$ for some $\tilde{p}_{0} \in \underline{\mathcal{C}ol_{0}}$. Let $\tilde{A}$ be the invariant torus which intersects $\underline{\mathcal{C}ol'_{0}}$ at $\tilde{p}$. {Since the transversality of $E^{9}$ and $E_{G_{2}, g_{2}}$ is independent of $\alpha$, for $\alpha$ sufficiently small,} we may decompose $T_{\tilde{p}} \Pi_{reg}^{\tilde{C}}$ as
$$T_{\tilde{p}_{0}} \Pi_{reg}^{\tilde{C}}= (\phi^{3})^{-1}_{*} E^{9} \oplus E'_{G_{2}, g_{2}},$$
in which $E'_{G_{2}, g_{2}}$ is the 2-dimensional space generated by $\dfrac{\partial}{\partial G_{2}}(\tilde{p})$ and $\dfrac{\partial}{\partial g_{2}}(\tilde{p})$. 
{We choose a basis $(\mathbf{e}_{1}, \cdots, \mathbf{e}_{9})$ of $(\phi^{3})^{-1}_{*} E^{9}$, and 9 vectors $(\mathbf{v}_{1}, \cdots, \mathbf{v}_{9})$ in $T_{\tilde{p}} \mathcal{C}ol'+T_{\tilde{p}} \tilde{A}$ such that $\mathbf{v}_{i}=\mathbf{e}_{i}+O(\alpha), i=1, \cdots, 9$. The vectors $\bigl(\dfrac{\partial}{\partial g_{2}}(\tilde{p}), \dfrac{\partial}{\partial G_{2}}(\tilde{p}), \mathbf{e}_{1}, \cdots, \mathbf{e}_{9}\bigr)$ thus forms a basis of $T_{\tilde{p}} \Pi_{reg}^{\tilde{C}}$. }

By Lemma \ref{lem: torsion transformation}, for $\alpha$ small enough, $\left|\dfrac{\partial^{2} \hat{\mathcal{H}}}{\partial \bar{g}^{2}_{2}}\right| > \alpha \cdot \tilde{\varepsilon}$ in an $O(\alpha)$-neighborhood of $\tilde{p}_{0}$ containing $\tilde{p}$. Hence we may write $(\phi^{3})^{-1}_{*} \dfrac{\partial}{\partial g_{2}} (\tilde{p}) \in T_{\tilde{p}} Col'$ as $(1+O(\alpha), \tilde{\alpha}, O(\alpha),\cdots, O(\alpha))$, in which $|\tilde{\alpha}|> \alpha \cdot \tilde{\varepsilon}$. 

In such a way, we have obtained 11 vectors {$\bigl(\dfrac{\partial}{\partial g_{2}}(\tilde{p}), \dfrac{\partial}{\partial G_{2}}(\tilde{p}), \mathbf{v}_{1}, \cdots, \mathbf{v}_{9}\bigr)$} in $T_{\tilde{p}} \underline{\mathcal{C}ol'}+T_{\tilde{p}} \tilde{A}$, which, written as row vectors, form a matrix of the form
$$
\begin{pmatrix}
1  & 0 & \vec{0}_{9}  \\
1+O(\alpha)& \tilde{\alpha} & O(\alpha)_{9} \\
O(\alpha)_{9}^{T} & O(\alpha)_{9}^{T} & Id_{9,9}+O(\alpha)_{9,9}
\end{pmatrix},
$$
in which $\vec{0}_{9}$ is the $1 \times 9$ zero matrix, $O(\alpha)_{9}$ (resp. $O(\alpha)_{9,9}$) is a $1 \times 9$ (resp. $9 \times 9$) matrix with only $O(\alpha)$ entries, and $Id_{9,9}$ is the $9 \times 9$ identity matrix. 

The determinant of this matrix is $\tilde{\alpha}+O(\alpha^{2})$, which is non-zero provided $\alpha$ is small enough. This implies $T_{\tilde{p}} \underline{\mathcal{C}ol'}+T_{\tilde{p}} \tilde{A}=T_{\tilde{p}} \Pi_{reg}^{\tilde{C}}$, \emph{i.e.} $\underline{\mathcal{C}ol'}$ is transverse to $\tilde{A}$ at $\tilde{p}$ in $\Pi_{reg}^{\tilde{C}}$.

The vector $\dfrac{\partial}{\partial G_{2}} (\tilde{p}_{0})$ being tangent to $\mathcal{C}ol$, the space $T_{\tilde{p}}\mathcal{C}ol'$ must contain a vector of the form $(O(\alpha),1+O(\alpha), O(\alpha)_{9})$. Since $\dfrac{\partial}{\partial G_{2}} (\tilde{p}_{0})$ is transverse to $T_{(\tilde{p}_{0})}\Pi_{reg}^{\tilde{C}}$ in $T_{(\tilde{p}_{0})}\Pi_{reg}$, any vector of the form $(O(\alpha),1+O(\alpha), O(\alpha)_{9})$ is also transverse to  $\Pi_{reg}^{\tilde{C}}$, provided $\alpha$ is small enough. Therefore $\tilde{A}$ is transverse to $\mathcal{C}ol'$ at $\tilde{p}$ in $\Pi_{reg}$.
\end{proof}

Since $(\phi^{3})^{-1}$ preserves $\mathcal{P}_{0}$ and $L_{2}$, it may only change the energy of a system at order $O(\alpha^{3})$. By hypothesis, The invariant tori intersecting $\mathcal{C}ol'$ transversely we have obtained have energy $O(\alpha^{3})$. We may then make proper $O(\alpha^{3})$-modifications of $\mathcal{L}_{1}$ to obtain an open set of invariant tori in the energy level $\mathcal{F}_{Kep}+\overline{\mathcal{F}_{sec}^{n, n'}}=0$  intersecting the set $\mathcal{C}ol'$ transversely in $\Pi_{reg}$. 

Therefore, those invariant 5-tori of $\mathcal{F}$ obtained in Subsection \ref{Subsection: Application of KAM theorem} intersecting $\mathcal{C}ol$ transversely form a set of positive measure in the energy level $\mathcal{F}=0$. Consequently, the intersection has codimension 3 in these 5-dimensional tori. If such a 5-dimensional torus is not ergodic, then it is foliated by 4-dimensional ergodic subtori obtained from one another by a rotation around $\vec{C}$. This gives a free SO(2)-action on the intersection of ${C}ol$ with the 5-dimensional tori, hence the intersection of ${\mathcal{C}ol}$ with each 4-dimensional ergodic torus is also of codimension 3.

\subsection{Conclusion}
\begin{lem} Let $\mathbf{K}$ be a submanifold of the $n$-dimensional torus $\T^n$ having codimension at least 2 in $\T^n$. Let $\tilde{\theta}=(\tilde{\theta}_1,\cdots,\tilde{\theta}_n)$ be the angular coordinates on $\T^n$; then almost all the orbits of the linear flow $\dfrac{d}{d t}\tilde{\theta}=\tilde{v},\,\tilde{v} \in \R^n$ do not intersect $\mathbf{K}$.
 \end{lem}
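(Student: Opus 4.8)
The plan is to show that the set of initial conditions whose orbit meets $\mathbf{K}$ has Lebesgue (Haar) measure zero in $\T^n$; since this is precisely the meaning of ``almost all orbits do not intersect $\mathbf{K}$'', that suffices. First I would introduce the \emph{flow-saturation} of $\mathbf{K}$, namely the set
$$\mathcal{S}=\{\tilde\theta+t\,\tilde v \bmod \Z^n :\ \tilde\theta\in\mathbf{K},\ t\in\R\}$$
of all points lying on some orbit through $\mathbf{K}$. By construction $\mathcal{S}$ is invariant under the flow, and an orbit meets $\mathbf{K}$ if and only if it is entirely contained in $\mathcal{S}$: indeed, if the orbit through $x$ hits $\mathbf{K}$ at $x+s_0\tilde v=\tilde\theta\in\mathbf{K}$, then every $x+s\tilde v=\tilde\theta+(s-s_0)\tilde v$ lies in $\mathcal{S}$, and conversely each point of $\mathcal{S}$ lies on an orbit passing through $\mathbf{K}$. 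Thus $\mathcal{S}$ is exactly the set of ``bad'' initial conditions, and it suffices to prove that it is Lebesgue-negligible.

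The key observation is that $\mathcal{S}$ is the image of the smooth map
$$\Phi:\ \mathbf{K}\times\R\longrightarrow\T^n,\qquad (\tilde\theta,t)\longmapsto \tilde\theta+t\,\tilde v \bmod \Z^n.$$
Since $\mathbf{K}$ has codimension at least $2$, its dimension is at most $n-2$, so the source manifold $\mathbf{K}\times\R$ has dimension at most $n-1$, strictly less than $n$. I would then invoke the standard fact that a $C^1$ map from a manifold of dimension strictly less than $n$ into an $n$-dimensional manifold has image of measure zero: in local charts $\Phi$ is Lipschitz, a Lipschitz map sends null sets to null sets, and the chart image of the source sits inside a coordinate subspace $\R^{n-1}\times\{0\}\subset\R^n$, which is already Lebesgue-negligible.

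The only point requiring a little care is the non-compactness of the $\R$-factor, which I would handle by writing $\R=\bigcup_{k\in\Z}[k,k+1]$ and covering $\mathbf{K}$ by countably many charts (any embedded submanifold is $\sigma$-compact); then $\mathbf{K}\times\R$ is a countable union of relatively compact pieces, on each of which $\Phi$ is Lipschitz with null image, and a countable union of null sets is null. Hence $\mathcal{S}=\Phi(\mathbf{K}\times\R)$ has Lebesgue measure zero in $\T^n$, and as $\mathcal{S}$ is exactly the set of points whose orbit meets $\mathbf{K}$, almost every initial condition yields an orbit avoiding $\mathbf{K}$, which is the assertion. Note that the argument uses no arithmetic property of $\tilde v$, so it applies to every velocity vector, ergodic or not; the entire difficulty is the elementary dimension count together with the measure-zero-image lemma, and there is no genuine obstacle beyond this bookkeeping.
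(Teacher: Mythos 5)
Your proof is correct and follows essentially the same route as the paper: both arguments consider the image of $\mathbf{K}\times\R$ (dimension at most $n-1$) under the smooth flow map into $\T^n$ and conclude it is Lebesgue-negligible by a dimension count. The only difference is cosmetic — the paper phrases the negligibility via Hausdorff dimension, while you spell out the Lipschitz/$\sigma$-compactness bookkeeping explicitly.
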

\begin{proof} By hypothesis, the set $\mathbf{K} \times \R \subset \T^{n} \times \R$ has Hausdorff dimension at most $n-1$. The set $\mathbf{K}'$ formed by orbits intersecting $\mathbf{K}$ is the image of $\mathbf{K} \times \R$ under the smooth mapping
$$\T^{n} \times \R \to \T^{n}\,\,\, (\tilde{\theta}(0), t) \mapsto \tilde{\theta}(t),$$
{in which $\tilde{\theta}(t)$ denotes the solution of this linear system with initial condition $\tilde{\theta}(0)$ when $t=0$}. Therefore $\mathbf{K}'$ has zero measure in $\T^{n}$.
\end{proof}

This lemma confirms that almost all trajectories on those ergodic tori (on which the flow is linear) intersecting $\mathcal{C}ol$ transversely does not intersect $\mathcal{C}ol$. Moreover, since the flow is irrational on these invariant ergodic tori of $\mathcal{F}$, almost all trajectories pass arbitrarily close to $\mathcal{C}ol$ without intersection. Each such trajectories give rise (via $K.S.$) to an collisionless orbit of $F$ which pass arbitrarily close to the set $\{Q_{1}=0\}$ of inner double collisions. Such orbits form a set of positive measure on the energy level $F=-f$. By varying $f$ and applying Fubini theorem, the collisionless orbits of $F$ along which the inner pair pass arbitrarily close to each other form a set of positive measure in the phase space $\Pi$. Theorem \ref{Theo: Main} is proved. 

\begin{rem}We have focused our attention on quasi-periodic almost-collision orbits along which the two instantaneous physical ellipses are almost coplanar. By analyticity of the system, the required non-degeneracy conditions, and therefore the result, can be improved to include more inclined cases as well.  
\end{rem}

\appendix
\section{Estimation of the perturbing function}
The following lemma is just a reformulation of \cite[Lemma~1.1]{QuasiMotionPlanar}. 

\begin{lem} \label{Legendre Expansion} When $\|Q_{1}\| \neq 0$, the expansion 
$$\mathcal{F}_{pert}=  K.S.^{*}\left(- \mu_1 m_2 \sum^{\phantom{ssss}}_{n \ge 2} \sigma_n P_n(\cos{\zeta}) \dfrac{\|Q_1\|^{n+1}}{\, \, \, \|Q_2\|^{n+1}}\right)= - \mu_1 m_2 \sum^{\phantom{ssss}}_{n \ge 2} \sigma_n\, K.S.^{*} \left(P_n(\cos{\zeta}) \dfrac{(1-e_{1} \cos u_{1})^{n+1}}{\, \, \, (1-e_{2} \cos u_{2})^{n+1}}\right) \alpha^{n+1}$$
 is convergent in $\dfrac{\|Q_1\|}{\,\|Q_2\|} \le \dfrac{1}{\hat{\sigma}},$ where 
 \begin{itemize}
\item $P_n$ is the n-th Legendre polynomial,
\item $\zeta$ is the angle between vectors $Q_{1}$ and $Q_{2}$,
\item $e_{1}$, $e_{2}$ are respectively the eccentricities of the two elliptic orbits, 
\item $u_{1}$, $u_{2}$ are respectively the eccentric anomalies of $Q_{1}$, $Q_{2}$ on their orbits, 
\item $\hat{\sigma}= max\{\sigma_0,\sigma_1\}$ and $ \sigma_n= \sigma_0^{n-1}+(-1)^n \sigma_1^{n-1}$.
 \end{itemize}
\end{lem}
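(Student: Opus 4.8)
The plan is to read off the expansion from the Legendre generating function and then transcribe it into the regularized variables. The starting point would be the classical identity, valid for a real scalar $a$ with $|a|\,\|Q_{1}\|<\|Q_{2}\|$,
$$\frac{1}{\|Q_{2}-a\,Q_{1}\|}=\frac{1}{\|Q_{2}\|}\bigl(1-2t\cos\zeta+t^{2}\bigr)^{-1/2}=\frac{1}{\|Q_{2}\|}\sum_{n\ge 0}P_{n}(\cos\zeta)\,a^{n}\,\frac{\|Q_{1}\|^{n}}{\|Q_{2}\|^{n}},\qquad t:=a\,\frac{\|Q_{1}\|}{\|Q_{2}\|},$$
obtained by writing $\|Q_{2}-a Q_{1}\|^{2}=\|Q_{2}\|^{2}(1-2t\cos\zeta+t^{2})$ and invoking $\sum_{n\ge 0}P_{n}(x)\,t^{n}=(1-2xt+t^{2})^{-1/2}$ for $|t|<1$. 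I would apply this to the two brackets of $F_{pert}$ with $a=\sigma_{0}$ and with $a=-\sigma_{1}$, the latter using $Q_{2}+\sigma_{1}Q_{1}=Q_{2}-(-\sigma_{1})Q_{1}$. Since $P_{0}\equiv1$, subtracting $1/\|Q_{2}\|$ in each bracket cancels exactly the $n=0$ term, so after multiplying the two series by $1/\sigma_{0}$ and $1/\sigma_{1}$ and adding, the coefficient of $P_{n}(\cos\zeta)\,\|Q_{1}\|^{n}\|Q_{2}\|^{-(n+1)}$ becomes $\sigma_{0}^{\,n-1}+(-1)^{n}\sigma_{1}^{\,n-1}=\sigma_{n}$.

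Two algebraic observations would then complete the identification. First, $\sigma_{n}$ at $n=1$ equals $\sigma_{0}^{0}-\sigma_{1}^{0}=0$, so the dipole term drops out and the sum starts at $n=2$; this reflects that $Q_{2}$ is the Jacobi vector issued from the barycentre of the inner pair, about which the dipole moment vanishes. Second, multiplying by $\|Q_{1}\|$ raises the exponent of $\|Q_{1}\|$ from $n$ to $n+1$, and since by definition $\mathcal{F}_{pert}=K.S.^{*}(\|Q_{1}\|\,F_{pert})$, applying $K.S.^{*}$ would give the first displayed equality of the lemma. For the second form I would substitute the Kepler relations $\|Q_{1}\|=a_{1}(1-e_{1}\cos u_{1})$ and $\|Q_{2}\|=a_{2}(1-e_{2}\cos u_{2})$, so that each quotient factors as
$$\frac{\|Q_{1}\|^{n+1}}{\|Q_{2}\|^{n+1}}=\alpha^{\,n+1}\,\frac{(1-e_{1}\cos u_{1})^{n+1}}{(1-e_{2}\cos u_{2})^{n+1}},\qquad \alpha=\frac{a_{1}}{a_{2}},$$
and pulling the scalar $\alpha^{\,n+1}$ outside $K.S.^{*}$ produces the expansion in powers of $\alpha$.

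The only step demanding genuine care, and thus the main obstacle, is the convergence statement together with the legitimacy of moving $K.S.^{*}$ through the infinite sum. Using $|P_{n}(\cos\zeta)|\le1$ and $|\sigma_{n}|\le\sigma_{0}^{\,n-1}+\sigma_{1}^{\,n-1}\le 2\hat\sigma^{\,n-1}$, I would dominate the general term of the series (before pullback) by $2\hat\sigma^{-2}\bigl(\hat\sigma\,\|Q_{1}\|/\|Q_{2}\|\bigr)^{n+1}$, a geometric majorant converging absolutely and uniformly on compact sets precisely where $\hat\sigma\,\|Q_{1}\|<\|Q_{2}\|$, i.e. on the open region $\|Q_{1}\|/\|Q_{2}\|<1/\hat\sigma$; the closed boundary $\|Q_{1}\|/\|Q_{2}\|=1/\hat\sigma$ would be recovered away from the collinear locus $\cos\zeta=\pm1$ from the sharper estimate $P_{n}(\cos\zeta)=O(n^{-1/2})$. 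Since $K.S.^{*}$ is merely composition with a fixed real-analytic map, this uniform convergence on compact subsets justifies interchanging $K.S.^{*}$ with the summation, which is exactly the content of the two equivalent forms on the first line. As the statement is the spatial, regularized transcription of \cite[Lemma~1.1]{QuasiMotionPlanar}, I expect no difficulty beyond this bookkeeping.
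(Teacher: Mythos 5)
Your proposal is correct and is essentially the argument the paper itself relies on: the paper offers no proof of this lemma, saying only that it is a reformulation of F\'ejoz's Lemma~1.1, whose proof is exactly your computation --- the Legendre generating function applied to the two brackets of $F_{pert}$ with $a=\sigma_{0}$ and $a=-\sigma_{1}$, cancellation of the $n=0,1$ terms (giving $\sigma_{n}$ and the start at $n=2$), multiplication by $\|Q_{1}\|$, pullback by $K.S.$, and the substitution $\|Q_{i}\|=a_{i}(1-e_{i}\cos u_{i})$. One small caveat: your boundary remark is insufficient as stated, since $P_{n}(\cos\zeta)=O(n^{-1/2})$ alone does not make the series converge at $\|Q_{1}\|/\|Q_{2}\|=1/\hat{\sigma}$ (one needs the oscillation of $P_{n}$, e.g.\ via Dirichlet's test), and at collinear boundary points the terms do not even tend to zero, so the closed-region phrasing of the lemma must be read with this exclusion --- a harmless point, because the paper only uses the expansion well inside the open region $\|Q_{1}\|/\|Q_{2}\|<1/\hat{\sigma}$.
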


We refer the notations and hypotheses of the next lemma to Subsection \ref{Subsection: Asynchronous elimination}.

\begin{lem} There exists a positive number $s>0$, such that $| \mathcal{F}_{pert} | \le \hbox{Cst}'\, |\alpha|^{3}$ in the $s$-neighborhood $T_{\mathcal{P}^{*},s}$ of $\mathcal{P}^{*}$ for some constant Cst independent of $\alpha$.
\end{lem}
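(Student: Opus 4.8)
The plan is to deduce the bound directly from the Legendre expansion of Lemma~\ref{Legendre Expansion}, by showing that under the assumptions of Subsection~\ref{Subsection: Asynchronous elimination} the entire series is dominated by its leading term, which is of order $\alpha^{3}$ (corresponding to $n=2$). Since the masses are fixed, the prefactor $\mu_{1} m_{2}$ is a constant and may be absorbed into $\text{Cst}'$; the whole issue is therefore to control the tail $\sum_{n\ge 2}\sigma_{n}\,K.S.^{*}\!\bigl(P_{n}(\cos\zeta)(1-e_{1}\cos u_{1})^{n+1}(1-e_{2}\cos u_{2})^{-(n+1)}\bigr)\alpha^{n+1}$ by a geometric series whose ratio is bounded away from $1$ uniformly in $\alpha$.

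First I would establish the estimate on the real domain $\mathcal{P}^{*}$. Writing $\|Q_{1}\|=a_{1}(1-e_{1}\cos u_{1})$ and $\|Q_{2}\|=a_{2}(1-e_{2}\cos u_{2})$ and using $a_{2}=a_{1}/\alpha$, one has $\|Q_{1}\|/\|Q_{2}\|=\alpha\,(1-e_{1}\cos u_{1})/(1-e_{2}\cos u_{2})$. On $\mathcal{P}^{*}$ the numerator is at most $1+e_{1}\le 2$ (as $e_{1}\le 1$) and the denominator is at least $1-e_{2}>1-e_{2}^{\wedge}>0$, so that $\|Q_{1}\|/\|Q_{2}\|\le 2\alpha/(1-e_{2}^{\wedge})$. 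The inequalities $\alpha<(1-e_{2}^{\wedge})/(2\sigma_{0})$ and $\alpha<(1-e_{2}^{\wedge})/(2\sigma_{1})$ built into the definition of $\alpha^{\wedge}$ then force $\|Q_{1}\|/\|Q_{2}\|<1/\hat{\sigma}$ with a fixed margin, which, together with $|\sigma_{n}|\le 2\hat{\sigma}^{\,n-1}$ and $|P_{n}(\cos\zeta)|\le 1$ for real $\zeta$, gives a geometric bound with ratio bounded away from $1$ and hence $|\mathcal{F}_{pert}|\le\text{Cst}'\,|\alpha|^{3}$ on $\mathcal{P}^{*}$.

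The remaining and principal task is to propagate this estimate to the complex $s$-neighborhood $T_{\mathcal{P}^{*},s}$. I would choose $s$ small enough that $e_{1},e_{2}$, the eccentric anomalies $u_{1},u_{2}$, and the angle $\zeta$ extend holomorphically over $T_{\mathcal{P}^{*},s}$ while remaining in a bounded complex region, with $1-e_{2}\cos u_{2}$ still bounded away from $0$. The one genuinely delicate point is that for complex $\zeta$ the argument $\cos\zeta$ leaves the real interval $[-1,1]$, so the Legendre polynomials are no longer bounded by $1$ but only by $|P_{n}(\cos\zeta)|\le R^{n}$ for some $R=R(s)>1$; this inflates the ratio of the geometric series from $\hat{\sigma}\,\|Q_{1}\|/\|Q_{2}\|$ to $R\,\hat{\sigma}\,\|Q_{1}\|/\|Q_{2}\|$. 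This is precisely where the third bound $\alpha<(1-e_{2}^{\wedge})/80$ in the definition of $\alpha^{\wedge}$ is needed: it supplies enough additional margin that, after also allowing for the $O(s)$ enlargement of $\|Q_{1}\|/\|Q_{2}\|$ on the complex domain, the inflated ratio still satisfies $R\,\hat{\sigma}\,\|Q_{1}\|/\|Q_{2}\|<\theta<1$ for some fixed $\theta$, uniformly for $|\alpha|<\alpha^{\wedge}$. Summing the resulting geometric series and factoring out the leading $n=2$ term, whose size is $O(|\alpha|^{3})$, yields the claimed bound $|\mathcal{F}_{pert}|\le\text{Cst}'\,|\alpha|^{3}$ on $T_{\mathcal{P}^{*},s}$, with $\text{Cst}'$ independent of $\alpha$.
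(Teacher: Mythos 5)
Your proposal is correct and follows essentially the same route as the paper: both rest on the Legendre expansion of Lemma~\ref{Legendre Expansion}, the bound $\|Q_1\|/\|Q_2\|=O(|\alpha|)$ on the complexified domain, and domination by a geometric series whose ratio is kept below $1$ by the conditions built into $\alpha^{\wedge}$. The only differences are cosmetic: where you invoke a Bernstein-type bound $|P_n(\cos\zeta)|\le R(s)^n$ for $\cos\zeta$ near $[-1,1]$, the paper proves $|P_n(\cos\zeta)|\le 5^n$ for $|\cos\zeta|\le 2$ by induction on Bonnet's recursion, and the paper makes explicit the small point you leave implicit, namely that the estimate is first obtained on the dense open subset where $\|Q_1\|\neq 0$ (so that $\zeta$ is defined) and then extended to all of $T_{\mathcal{P}^{*},s}$ by continuity of $\mathcal{F}_{pert}$.
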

\begin{proof} By continuity, there exists a positive number $s$, such that in a dense open set of $T_{\mathcal{P}^{*},s}$ defined by the condition $\|Q_{1}\| \neq 0$, we have
$$
|\cos{\zeta}| \le 2;\,\,
\left|\dfrac{\|Q_{1}\|}{\|Q_{2}\|} \right| \le \dfrac{4 |\alpha|}{1-e_{2}^{\wedge}}.
$$
in which $\cos{\zeta}$, $\|Q_{1}\|$ and $\|Q_{2}\|$ are considered as the corresponding analytically extensions of the original functions.

Using Bonnet's recursion formula of Legendre polynomials
$$(n+1) P_{n+1}(\cos{\zeta})=(2n+1) \, \cos{\zeta}\, P_{n}(\cos{\zeta})-n P_{n-1}(\cos{\zeta}), $$
by induction on $n$, we obtain $|P_{n}(\cos{\zeta})| \le 5^{n}$.

Thus 
\begin{align*} 
|\mathcal{F}_{pert}|&=  \mu_1 m_2 \left|\sum^{\phantom{ssss}}_{n \ge 2} \sigma_n P_n(\cos{\zeta}) \dfrac{\|Q_1\|^{n+1}}{\, \, \, \|Q_2\|^{n+1}}\right|\\
               & \le \mu_{1} m_{2} \sum^{\phantom{ssss}}_{n \ge 2} 5^{n} \left|\dfrac{\|Q_1\|}{ \, \|Q_2\|}\right|^{n+1}\\
               & \le \dfrac{\mu_{1} m_{2}}{5} \sum^{\phantom{ssss}}_{n \ge 2}  \dfrac{5^{n+1} 4^{n+1} |\alpha|^{n+1}}{\, \, \, (1-e_{2}^{\wedge})^{n+1}}\\
               & \le \dfrac{\mu_{1} m_{2}}{5}  \dfrac{20^{3} |\alpha|^{3}}{\, \, \, (1-e_{2}^{\wedge})^{2}} \dfrac{1}{1-e_{2}^{\wedge}- 20 |\alpha|}.
\end{align*}\normalsize
It is then sufficient to impose $\alpha \le \alpha^{\wedge}$ and make $s$ sufficiently small to ensure that $|\alpha| \le \dfrac{1-e_{2}^{\wedge}}{40}$. 

By continuity of the function $\mathcal{F}_{pert}$, the estimation holds in $T_{\mathcal{P}^{*},s}$.
\end{proof}

\section{Torsion of the Quadrupolar Tori} \label{Appendix: Torsion}
We fix $\vec{C}$ vertical. After Jacobi's elimination of node, we further normalize the coordinates $(G_{1}, g_{1}, G_{2})$ and parameters $C$ as in \cite{LidovZiglin} by setting 
$$\bbalpha=\dfrac{C}{L_1},\, \bbbeta=\dfrac{G_2}{L_1},\, \bbdelta=\dfrac{G_{1}}{L_{1}}, \,\bbomega=g_{1}.$$ 
In these coordinates, we have 
$$\mathcal{F}_{quad}=\dfrac{k}{\bbbeta^3}(\mathcal{W}+\dfrac{5}{3}),$$
in which $k$ is a irrelevant non-zero constant, and
\small
$$\mathcal{W}=-2 \bbdelta^2+\dfrac{(\bbalpha^2-\bbbeta^2-\bbdelta^2)^2}{4 \bbbeta^2} +5 (1-\bbdelta^2) \sin^2 \bbomega \, \left(\dfrac{(\bbalpha^2-\bbbeta^2-\bbdelta^2)^2}{4 \bbbeta^2 \bbdelta^2}-1\right). $$
\normalsize
Let $\overline{\mathcal{W}}(\bbdelta, \bbomega, \bbbeta;\bbalpha)=\dfrac{\mathcal{W}+\frac{5}{3}}{\bbbeta^3}$. This is a 2 degrees of freedom Hamiltonian in coordinates $(\bbdelta, \bbomega, \bbbeta, g_{2})$ with a parameter $\bbalpha$. We shall formulate our results in terms of $\overline{\mathcal{W}}$, from which the corresponding results for $\mathcal{F}_{quad}$ follow directly.

In the forthcoming proof, we deduce the existence of torsion of $\overline{\mathcal{W}}$ from a local approximating system $\overline{\mathcal{W}}'(\bbdelta, \bbomega, \bbbeta; \bbalpha)$ near $\{\bbdelta=\bbdelta_{min}:=|\bbalpha-\bbbeta|\}>0$  whose flow, for fixed $\bbbeta$, is linear in the $(\bbdelta, \bbomega)$-plane. Note that when $\bbbeta\neq \bbalpha$, the expression of $\overline{\mathcal{W}}$ is analytic at $\bbdelta=\bbdelta_{min}$. The local approximating system is thus obtained by developing $\overline{\mathcal{W}}$ into Taylor series of $\bbdelta$ at $\bbdelta=\bbdelta_{min}$. Finally, we show that the torsion does not vanish when $\bbalpha-\bbbeta \to 0$, which ensures the existence of torsion for quadrupolar tori at which $\bbalpha=\bbbeta$ close enough to the coplanar equilibrium with a degenerate inner ellipse. This is allowed since locally in this region, the symplectically reduced secular space by the $SO(3)$-symmetry is smooth. By doing so, we avoid choosing coordinates near these tori.

\begin{lem} \label{Appendix C, lem 1} The torsion of the quadrupolar tori near the lower boundary $\{\bbdelta=\bbdelta_{min}:=|\bbalpha-\bbbeta|\}>0$ exists and does not vanish when $\bbalpha-\bbbeta \to 0$. 
\end{lem}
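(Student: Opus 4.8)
The plan is to establish the torsion — i.e.\ the non-degeneracy of the secular frequency map — by reducing it to an explicit computation of the linear frequency $\omega_0$ of the oscillation about the coplanar equilibrium $\{\bbdelta=\bbdelta_{min}\}$, and then to control the degenerate limit $\bbalpha\to\bbbeta$. First I would isolate the structural fact that makes $\{\bbdelta=\bbdelta_{min}\}$ a clean equilibrium. Writing $\overline{\mathcal{W}}=A(\bbdelta)+B(\bbdelta)\sin^2\bbomega$, a short computation gives $\bbalpha^2-\bbbeta^2-\bbdelta_{min}^2=2\bbbeta(\bbalpha-\bbbeta)$, hence $\frac{(\bbalpha^2-\bbbeta^2-\bbdelta_{min}^2)^2}{4\bbbeta^2\bbdelta_{min}^2}=1$ and therefore $B(\bbdelta_{min})=0$. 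Consequently $\partial_\bbomega\overline{\mathcal{W}}$ vanishes along $\{\bbdelta=\bbdelta_{min}\}$, so this set is invariant and corresponds to the coplanar equilibrium. Expanding in $\rho:=\bbdelta-\bbdelta_{min}$ and keeping the part linear in $\rho$ yields the local approximating Hamiltonian
$$\overline{\mathcal{W}}'=A(\bbdelta_{min})+\bigl(A'(\bbdelta_{min})+B'(\bbdelta_{min})\sin^2\bbomega\bigr)\,\rho,$$
whose Hamilton equations, for each fixed $\bbbeta$, are linear in $\rho$ (with $\bbomega$ circulating autonomously): this is the announced linear flow in the $(\bbdelta,\bbomega)$-plane.

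Next I would compute the frequency of this linear flow. Either via the period integral $\oint d\bbomega/\dot\bbomega=\int_0^{2\pi}\frac{d\bbomega}{A'(\bbdelta_{min})+B'(\bbdelta_{min})\sin^2\bbomega}$, or by passing to the symplectic Cartesian coordinates $(X,Y)$ defined by $\bbdelta-\bbdelta_{min}=\tfrac12(X^2+Y^2)$ and $\bbomega=\arg(X+iY)$, in which $\overline{\mathcal{W}}'$ becomes the harmonic oscillator $A(\bbdelta_{min})+\tfrac12A'(\bbdelta_{min})X^2+\tfrac12\bigl(A'(\bbdelta_{min})+B'(\bbdelta_{min})\bigr)Y^2$, one obtains
$$\omega_0(\bbbeta;\bbalpha)^2=A'(\bbdelta_{min})\bigl(A'(\bbdelta_{min})+B'(\bbdelta_{min})\bigr).$$
With $A'(\bbdelta_{min})=-\frac{2\bbdelta_{min}(\bbalpha+\bbbeta)}{\bbbeta^4}$ and $B'(\bbdelta_{min})=-\frac{10\bbalpha(1-\bbdelta_{min}^2)}{\bbbeta^4\bbdelta_{min}}$, the product collapses, upon substituting $\bbdelta_{min}^2=(\bbalpha-\bbbeta)^2$, to
$$\omega_0^2=\frac{4(\bbalpha+\bbbeta)}{\bbbeta^8}\Bigl[(\bbalpha-\bbbeta)^2(\bbalpha+\bbbeta)+5\bbalpha\bigl(1-(\bbalpha-\bbbeta)^2\bigr)\Bigr],$$
which is manifestly analytic across $\bbalpha=\bbbeta$: the apparent pole $B'\sim\bbdelta_{min}^{-1}$ is cancelled by the zero $A'\sim\bbdelta_{min}$. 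Both factors being negative for small $\bbdelta_{min}$, the equilibrium stays elliptic ($\omega_0^2>0$), and $\omega_0^2\to 40/\bbbeta^6$ as $\bbalpha\to\bbbeta$.

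Finally I would read off the torsion. Since the approximating flow is linear, $\overline{\mathcal{W}}'=E_0(\bbbeta)+\omega_0(\bbbeta)\,\mathcal{J}_1$ depends affinely on the oscillation action $\mathcal{J}_1$, so with the second action $\bbbeta$ (conjugate to $g_2$) the Hessian of $\overline{\mathcal{W}}'$ with respect to $(\mathcal{J}_1,\bbbeta)$ has determinant $-(\partial_\bbbeta\omega_0)^2$; the torsion is thus non-degenerate exactly when $\partial_\bbbeta\omega_0\neq0$. From the explicit formula, differentiating at fixed $\bbalpha$ gives $\partial_\bbbeta(\omega_0^2)\big|_{\bbalpha=\bbbeta}=-300/\bbalpha^7\neq0$, hence $\partial_\bbbeta\omega_0\neq0$ at, and by analyticity near, $\bbalpha=\bbbeta$; this is precisely the assertion that the torsion exists for $\bbdelta_{min}>0$ and does not vanish as $\bbalpha-\bbbeta\to0$. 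As a check one may instead linearize $\overline{\mathcal{W}}$ directly at the smooth equilibrium $\bbdelta=0$ of the double cover $\mathcal{D}$ in the critical case $\bbalpha=\bbbeta$ (where the $1/\bbdelta^2$ singularity is absent), recovering the same value $\omega_0^2=40/\bbbeta^6$.

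I expect the only real difficulty to be the passage to the limit $\bbalpha\to\bbbeta$. There the coordinates $(\bbdelta,\bbomega)$ degenerate at the collapsed centre and the individual coefficients $A'$, $B'$ blow up or vanish, so the delicate point is to verify that their combination $\omega_0^2$, and its $\bbbeta$-derivative, extend analytically and stay nonzero, consistently with the smooth reduced picture available at $\bbalpha=\bbbeta$; the remaining steps are routine differentiations.
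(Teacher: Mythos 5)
Your proposal is correct and follows essentially the same route as the paper's own proof: expand $\overline{\mathcal{W}}$ in $\bbdelta-\bbdelta_{min}$, keep the linear term, pass to action-angle variables to obtain the frequency of the approximating linear flow (your factorization $\omega_0=\sqrt{A'(A'+B')}$ coincides with the paper's harmonic-mean formula $2\pi\bigl(\int_0^{2\pi}d\bbomega/\bar{\Xi}(\bbalpha,\bbbeta,\bbomega)\bigr)^{-1}$, since your $A'+B'\sin^2\bbomega$ is exactly the paper's $\bar{\Xi}$), identify the torsion with $(\partial_\bbbeta\omega_0)^2$ via the Hessian in $(\mathcal{J}_1,\bbbeta)$, and check non-vanishing as $\bbalpha\to\bbbeta$. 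Your numerical values are consistent with the paper's: from $\omega_0^2\to 40/\bbbeta^6$ and $\partial_\bbbeta(\omega_0^2)\to-300/\bbbeta^7$ one gets $(\partial_\bbbeta\omega_0)^2=\dfrac{(\partial_\bbbeta\omega_0^2)^2}{4\omega_0^2}\to\dfrac{1125}{2\bbbeta^8}$, which is precisely the paper's limiting torsion.
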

\begin{proof}
We develop $\widetilde{\mathcal{W}}$ into Taylor series with respect to $\bbdelta$ at $\bbdelta=\bbdelta_{min}$. We set $\bbdelta_{1}=\bbdelta-\bbdelta_{min}$, and obtain
$$\widetilde{\mathcal{W}}=\bar{\Phi}(\bbalpha,\bbbeta) + \bar{\Xi}(\bbalpha,\bbbeta, \bbomega)\,\bbdelta_{1}+O (\bbdelta_{1}^{2}),$$
in which 
$$\bar{\Xi}(\bbalpha,\bbbeta, \bbomega)=-\dfrac{2 \left((9 \bbalpha^2 \bbbeta-6 \bbalpha \bbbeta^2+\bbbeta^3-4 \bbalpha^3+5 \bbalpha)+(-5 \bbalpha+5 \bbalpha^3-10 \bbalpha^2 \bbbeta+5 \bbalpha \bbbeta^2)\cos^{2} \omega\right)}{\bbbeta^4 |\bbalpha-\bbbeta|}.$$

We eliminate the dependence of $\bbomega$ in the linearized Hamiltonian $\bar{\Phi}(\bbalpha,\bbbeta)+\bar{\Xi}(\bbalpha,\bbbeta, \bbomega)\,\bbdelta_{1}$ by computing action-angle coordinates. The value of the action variable $\overline{\mathcal{I}}_{1}$ on the level curve $$E_{f}: \bar{\Phi}(\bbalpha,\bbbeta)+\bar{\Xi}(\bbalpha,\bbbeta, \bbomega)\,\bbdelta_{1}=f$$ is computed from the area between this curve and $\bbdelta_{1}=0$, that is
$$\overline{\mathcal{I}}_{1}=\dfrac{1}{2 \pi} \int_{E_{f}} \bbdelta_{1} d \bbomega=\dfrac{f-\bar{\Phi}(\bbalpha,\bbbeta)}{2 \pi} \int_{0}^{2 \pi} \dfrac{1}{\bar{\Xi}(\bbalpha,\bbbeta, \bbomega)} d \bbomega=\overline{\mathcal{I}}_{1}.$$

 We have then
$$\widetilde{\mathcal{W}}=\bar{\Phi}(\bbalpha,\bbbeta) + 2 \pi\left(\int_{0}^{2 \pi} \frac{1}{\bar{\Xi}(\bbalpha,\bbbeta, \bbomega)} d \bbomega\right)^{-1}\,\overline{\mathcal{I}}_{1}+O (\overline{\mathcal{I}}_{1}^{2}).$$

For $\overline{\mathcal{I}}_{1}$ small enough, the torsion of $\widetilde{\mathcal{W}}$ is dominated by the torsion of the term linear in $\overline{\mathcal{I}}_{1}$, which, represented by the absolute value of the determinant of the corresponding Hessian function with respect to $\overline{\mathcal{I}}_{1}$ and $\bbbeta$, is 

$$\left[2 \pi \frac{d}{d \bbbeta} \left(\int_{0}^{2 \pi} \frac{1}{\bar{\Xi}(\bbalpha,\bbbeta, \bbomega)} d \bbomega\right)^{-1}\right]^{2}.$$

Using the formula
$$\int_{0}^{2 \pi} \dfrac{d \bbomega}{a + b \cos \bbomega}=\dfrac{2 \pi}{\sqrt{a^{2}-b^{2}}}$$
we obtain
$$2 \pi\left(\int_{0}^{2 \pi} \frac{1}{\bar{\Xi}(\bbalpha,\bbbeta, \bbomega)} d \bbomega\right)^{-1}=-\dfrac{2\sqrt{\bbalpha+\bbbeta} \sqrt{9 \bbalpha^2 \bbbeta-6\bbalpha \bbbeta^2+\bbbeta^3-4 \bbalpha^3+5 \bbalpha}}{\bbbeta^4},$$ 

which depends non-trivially on $\bbbeta$. 

Moreover, at the limit $\bbalpha=\bbbeta$, the limit of the above function is $\dfrac{1125}{2 \bbbeta^{8}}$. By continuity, this proves the non-vanishing of the torsion for quadrupolar tori at which $\bbalpha=\bbbeta$ close enough to the coplanar equilibrium with a degenerate inner ellipse.
\end{proof}

\begin{ack} These results are part of my Ph.D. thesis \cite{ZLthesis} prepared at the Paris Observatory and the Paris Diderot University. Many thanks to my supervisors Alain Chenciner and Jacques Féjoz, for their enormous help and endless patience during these years.
\end{ack}

\bibliography{QuasiperiodicMotionSpatial}
\end{document}